%
%
%


\documentclass[11pt]{article}

\usepackage{amsmath}
\usepackage{amsfonts}
\usepackage{amsthm}
\usepackage{amsbsy}

\usepackage[english]{babel} 

\usepackage{amssymb}
\usepackage{graphicx}        
\usepackage{cite}                            
\usepackage{epstopdf}
\usepackage{epsfig}
\usepackage{subfigure}
\usepackage{xcolor}


\newtheorem{remark}{Remark}[section]
\newtheorem{lemma}{Lemma}[section]
\newtheorem{theorem}{Theorem}[section]

\newtheorem{algorithm}{Algorithm}[section]

\def\RR{\mathbb R}
\def\R{\mathbb R}
\def\E{\mathbb E}

\def\ind{\hspace{0.25in}}
\def\fP{\em \sf}
\def\nt{n_{\mbox{\sc \small tot}}}
\def\cir{\makebox[0.5cm]{$\circ$}}
\def\IR{\mathop{\mbox{\rm Sround}}\nolimits}
\newcommand{\intg}[1]{\lfloor #1\rfloor}


\def\be{\begin{equation}}
\def\ee{\end{equation}}
\def\bea{\begin{eqnarray}}
\def\eea{\end{eqnarray}}
\def\beas{\begin{eqnarray*}}
\def\eeas{\end{eqnarray*}}

\def\rand{\xi}

\setlength{\textwidth}{16cm}
\setlength{\oddsidemargin}{.5cm}
\setlength{\evensidemargin}{.5cm}
\setlength{\textheight}{21cm}
\setlength{\topmargin}{-.5cm}

\title{Monte Carlo stochastic Galerkin methods for the Boltzmann equation with uncertainties: space-homogeneous case}

\author{
	L. Pareschi 
	\thanks{
		Mathematics and Computer Science Department,
		University of Ferrara -
		Via Machiavelli 35,
		44121 Ferrara, Italy 
		{(\sl lorenzo.pareschi@unife.it)}
	}
	\and
	M. Zanella
	\thanks{
		Mathematics Department, 
		University of Pavia - 
		Via Ferrata 5, 
		27100 Pavia, Italy 
		{(\sl mattia.zanella@unipv.it)}
	}
}

\begin{document}
\maketitle

\begin{abstract}
In this paper we propose a novel numerical approach for the Boltzmann equation with uncertainties. The method combines the efficiency of classical direct simulation Monte Carlo (DSMC) schemes in the phase space together with the accuracy of stochastic Galerkin (sG) methods in the random space. This  hybrid formulation makes it possible to construct methods that preserve the main physical properties of the solution along with spectral accuracy in the random space. The schemes are developed and analyzed in the case of space homogeneous problems as these contain the main numerical difficulties. Several test cases are reported, both in the Maxwell and in the variable hard sphere (VHS) framework, and confirm the properties and performance of the new methods. 
\medskip

\noindent{\bf Keywords:} Boltzmann equation, Kinetic equations, Uncertainty Quantification, Direct Simulation Monte Carlo methods, stochastic Galerkin methods \\

\noindent{\bf Mathematics Subject Classification:}
\end{abstract}

\section{Introduction}

Kinetic equations are commonly used to describe the aggregate trends of phenomena involving large number of interacting particles. Their effectivity has been proven in rather different research fields, ranging from classical rarefied gas dynamics and granular media to socio-economy and  traffic flow engineering. Without having the ambition to review the enormous literature on this field of research, we mention \cite{Ce88,CFTV,DPR,Klar,PT1,Villani1} and the references therein for an introduction to the subject.

Despite the established literature in the deterministic description of physical phenomena, in real world applications it is of paramount importance to quantify and control possible deviations from expected trends. In particular, experimentally unavoidable uncertainties are often present due to incomplete information on the microscopic dynamics, initial states or the boundary conditions of the problem. The impact of these uncertainties must be taken into account in the model parameters and initial distributions. The general idea is based on considering those quantities as random variables that influence the evolution of the kinetic distribution and, therefore, increase the dimensionality of the problem. 

In details, let $f = f(z,v,t)$ be a nonnegative function describing the evolution of a distribution of particles travelling with velocity $v\in \mathbb R^{d_v}$ at time $t\ge 0$ and $z \in \Omega \subseteq \mathbb R^{d_z}$ is a random vector characterizing the uncertain parameters. Under the assumption of space homogeneity, the evolution of $f$ is obtained by the following Boltzmann equation 
\begin{equation}\begin{split}\label{eq:boltzmann}
\dfrac{\partial}{\partial t} f = \dfrac{1}{\varepsilon} Q(f,f), 
\end{split}\end{equation}
with initial condition $f(z,v,0) = f_0(z,v)$ that may be considered uncertain. In \eqref{eq:boltzmann} the parameter $\epsilon>0$ is the Knudsen number and $Q(\cdot,\cdot)$ defines a collision operator.  In the present work we concentrate on challenging case of the nonlinear Boltzmann operator describing binary collisions among particles
\begin{equation}
\label{eq:Q}
\begin{split}
&Q(f,f)(z,v,t) \\
&\quad = \int_{\mathbb R^{d_v}} \int_{\mathbb{S}^{d_v-1}} B(z,|v-v_*|,\omega) [f(z,v^\prime,t)f(z,v_*^\prime,t) - f(z,v,t)f(z,v_*,t)]d\omega dv_*,
\end{split}
\end{equation}
where $\omega$ is a unit vector of the sphere $\mathbb S^{d_v-1}$. As usual, in \eqref{eq:boltzmann} we adopted the notation $(v^\prime,v_*^\prime)$ to represent the collisional velocities associated to the velocities $(v,v_*)$ and collision parameter $\omega$, i.e. 
\begin{equation}\label{eq:binary}
v^\prime = \dfrac{v+v_*}{2} + \dfrac{|v-v_*|}{2}\omega, \qquad v_*^\prime = \dfrac{v+v_*}{2} - \dfrac{|v-v_*|}{2}\omega.
\end{equation}
The kernel $B(z,|v-v_*|,\omega)$ is a nonnegative function selecting the frequency of interactions and whose form is 
\[
B(z,|v-v_*|,\omega) = b_{\gamma}(\omega,z) |v-v_*|^{\gamma(z)},
\]
and $\gamma(z)$ characterizes the interaction forces between molecules. The case $\gamma = 0$ is often referred as Maxwellian case whereas $\gamma = 1$ as the hard sphere case, see \cite{Ce88} for an overview. Note that, since in a space homogeneous setting the Knudsen number acts simply as a scaling factor for the time variable, to simplify notations in the sequel we will fix ${\varepsilon}=1$.

In recent years we have seen a growing interest in building numerical methods for kinetic equations with uncertainty and studying their properties (see the collection \cite{JP}). A very well established direction of research concerns the construction of stochastic Galerkin-type methods based on the use of deterministic methods in the phase space. Such methods have demonstrated numerical and theoretical evidence of spectral accuracy in different contexts \cite{HJ,HJS,JZ,LJ,SHJ}. Their computational cost, however, is generally high due to the \emph{curse of dimensionality} present in kinetic equations and even more reinforced by the terms that model uncertainty. Moreover, the main physical properties of the solution, among which its positivity, are lost by the numerical method. Another class of methods recently developed is based on the use of control variate techniques in a multi-fidelity context \cite{DP1, DP2, LZ}. These methods, are significantly more efficient than Galerkin's stochastic approaches, especially for problems with high dimensionality of the random space. Moreover, being based on Monte Carlo collocation techniques in the parameter space their non intrusive nature permits to preserve the physical properties of the underlying deterministic numerical methods in the phase space.  We mention that, recently, multilevel Monte Carlo techniques have been also developed for some kinetic equations \cite{HPW}. 

In this work we study a different approach from the previous ones, inspired by recent particle methods based on stochastic polynomial chaos expansions for mean-field equations \cite{CPZ, CZ19}. The aim of the methods is to combine the efficiency of DSMC techniques for the nonlinear Boltzmann equation in phase space \cite{BI89, BoNa, Na83, Ba86} (see also \cite{IlNe87, Wagner1992} for convergence results) with the accuracy of stochastic Galerkin methods in parameter space \cite{HJ, Xiu}. This  novel hybrid formulation makes it possible to construct efficient methods that preserve the main physical properties of the solution along with spectral accuracy in the random space. In the present paper the schemes are developed and analyzed in the case of space homogeneous problems as they contain most of the main numerical difficulties. We stress that the approach presented here fall within the class of intrusive methods and, although we develop our methods for the classical case of rarefied gas dynamics, the derivation is quite general and admits several natural extensions to other Boltzmann type kinetic equations with uncertainties in various fields, such as Boltzmann's semiconductor equation \cite{JL2} and the Landau-Fokker-Planck equation of plasma physics \cite{HJS}. We refer also to the recent work \cite{Po} where intrusive gPC Monte Carlo methods have been derived for the linear Boltzmann equation.

The rest of the manuscript is organized as follows. First, in Section 2 we recall the classical DSMC method by Nanbu \cite{Na83, Ba86} in the case of Maxwell molecules and show how to derive its stochastic Galerking projection when uncertainties are present. 
Next Section 3 is devoted to the challenging case of variable hard sphere, where the presence of the dummy collision technique based on acceptance-rejection, introduces additional difficulties. We show how to overcome these problems with a suitable reformulation of the Monte Carlo simulation algorithm. Next, in Section 4 we test the methodology for several problems including the Kac equation, and the Boltzmann equation for Maxwell molecules and variable hard sphere. In this latter case, we show, in particular, that a suitable regularization of the acceptance-rejection technique is necessary in order to keep spectral accuracy. Some conclusions are reported in the last Section. In separate Appendices we give details on the exact solutions used as comparison and show consistency of the novel hybrid representation based on statistical samples in the phase space combined with generalized polynomial chaos expansions in the random space.

\section{DSMC-sG for Maxwellian molecules with uncertainties}\label{eq:DSMC_max}

In this section we briefly recall the basics of DSMC methods for the Boltzmann equation in the simplified case of Maxwellian molecules and the fundamentals on stochastic Galerkin techniques. Next we show how to derive the  
corresponding DSMC-sG approach.

\subsection{Classical DSMC method}\label{eq:classical}

In order to present the classical DSMC algorithm we first focus on the case without uncertainty. We are interested in the evolution of the density $f=f(v,t)$, $v\in\mathbb R^{d_v}$, $t\ge 0$, solution of \eqref{eq:boltzmann} with initial condition $f(v,0)= f_0(v)$. In the Maxwellian case, i.e. $B\equiv 1$, we may rewrite  \eqref{eq:boltzmann} as follows
\begin{equation}
\dfrac{\partial}{\partial t} f = \left[ Q^+(f,f) - \mu f\right],
\label{eq:BMM}
\end{equation}
where $Q^+(f,f)$ denotes the gain part of the collision operator, $\mu> 0$ is a constant and we assumed $\int_{\mathbb R^{d_v}}f(v_*,t)dv_* = 1$ so that $f$ is a probability density. More precisely, under the above assumptions, we have explicitly $\mu=2^{d_v-1}\pi$. 

To introduce the DSMC scheme we consider a simulation algorithm based on the time discrete form of \eqref{eq:BMM} originally proposed by Nambu \cite{Na83}.
Let us consider a time interval $[0,t_{\rm max}]$, and let us
discretize it in $\nt$ intervals of size $\Delta t$. Let us denote
by $f^n(v)$ an approximation of $f(v,n\Delta t)$. The forward
Euler scheme  writes{\[
   f^{n+1} = \left(1-{\mu\Delta t}\right)f^n +
             {\mu\Delta t}\frac{Q^+(f^n,f^n)}{\mu}.
\label{eq:forward_Euler}
\]}
Clearly if $f^n$ is a probability density both
${Q^+(f^n,f^n)}/{\mu}$ and $f^{n+1}$ are probability densities provided that {$\mu\Delta t \leq 1$}. A symmetrized version of the algorithm based on this probabilistic interpretation is reported below \cite{Na83, Ba86}.

\begin{algorithm}[Nanbu-Babovski for Maxwell molecules]~
\begin{enumerate}
\item Compute the initial velocities $ \{v_i^0, i=1,\ldots,N\} $,\\ 
      by sampling them from the initial density $f_0(v)$ 
\item   \begin{tabbing}
\= {\fP for} \= $n=0$  {\fP to} $\nt-1$ \\
          \>          \> given $\{v_i^n,i=1,\ldots,N\}$\\
          \>       \>   \ind \= \cir \= set $N_c = \IR(\mu N\Delta t/2)$ \\
          \>       \>        \> \cir \> select $N_c$ pairs $(i,j)$ uniformly among all possible pairs, \\
          \>       \>        \>      \> - \= perform the collision between $i$ and $j$, and compute \\
          \>       \>        \>      \>      \> $v_i'$ and $v_j'$ according to the  collision law \\
          \>       \>        \>      \> - set $v_i^{n+1} = v_i'$, $v_j^{n+1}=v_j'$ \\
          \>       \>        \> \cir \> set $v_i^{n+1}=v_i^n$ for all the particles that have not been selected\\
         \> {\fP end for}
\end{tabbing}
\end{enumerate}
\label{al:dsmc1}
\end{algorithm}

Here by $\IR(x)$ we denote the stochastic rounding of a
positive real number $x$
\[
   \IR(x) = \left\{\begin{array}{lll}
                     {\intg{x}} + 1 & \mbox{with probability} & x-{\intg{x}} \\
                     {\intg{x}}     & \mbox{with probability} & 1-x+{\intg{x}}
                   \end{array}
            \right.
\]
where $\intg{x}$ denotes the integer part of $x$.

The kinetic distribution, as well as its moments, is then recovered from the empirical density distribution
\begin{equation}
f_N(v,t)=\frac1{N}\sum_{i=1}^N \delta(v-v_i(t)),
\label{eq:emp}
\end{equation}
where $\delta(\cdot)$ is the the Dirac delta, or some suitable regularization of \eqref{eq:emp}.

\begin{remark}~
\begin{itemize}
\item 
The algorithm just described
can be applied to a variety of Boltzmann equation with velocity independent collision 
kernel. The only difference consists in the computation of the
collisional velocities. 
\item Note that, the method become very expensive
and practically
  unusable near the fluid regime because in this case the collision time between
  the particles becomes very small, and a huge number of collisions
  is needed in order to reach a fixed final time. Methods which overcome this kind of limitation, based on exponential time discretizations and implicit-explicit Runge-Kutta methods have been proposed in \cite{DP15, Par2, Par3}. In the sequel, even if we focus our attention on the classic DSMC algorithm described above, our methodology extends naturally also to this latter class of algorithms.
  \end{itemize} 
  \end{remark}

\subsection{DSMC-sG methods}\label{sect:DSMC}
We consider the stochastic Galerkin extension of the DSMC algorithm \ref{al:dsmc1} in presence of uncertainties  $f=f(z,v,t)$. 
%
Similarly to Section \ref{eq:classical}, in the case of Maxwell molecules $B\equiv 1$, the collision operator can be rewritten as
\begin{equation}
Q(f,f)(z,v,t)=Q^+(f,f)(z,v,t)-\mu f(z,v,t),
\label{eq:BMMi}
\end{equation}
where $\mu>0$ is a constant and we assumed $\int_{\R^{d_v}} f(z,v_*,t)\,dv_*=1$, $\forall\,z \in \Omega$.

We consider a set of $N$ samples $v_i(z,t)$, $i=1,\ldots,N$ from the kinetic solution at time $t$ and approximate $v_i(z,t)$ by its {generalized polynomial chaos (gPC) expansion}
\[
v^M_i(z,t)=\sum_{m=0}^M \hat{v}_{i,m}(t)\Phi_m(z),
\]
where  $\left\{\Phi_{m}(z)\right\}_{m=0}^M$ are a set of orthogonal polynomials, of degree less or equal to $M$ {orthonormal} with respect to the PDF $p(z)$
\[
\int_{\Omega} \Phi_n(z) \Phi_m(z) p(z)\,dz = \E[\Phi_m(\cdot)\Phi_n(\cdot)]=\delta_{mn},\qquad m,n=0,\ldots,M,
\] 
and $\hat{v}_{i,m}$ is the {projection} of the solution with respect to $\Phi_m$
\[
\hat{v}_{i,m}(t)=\int_{\Omega} v_i(z,t)\Phi_m(z) p(z)\,dz=\E[v_i(\cdot,t)\Phi_m(\cdot)].
\]
We underline that the generation of the set of stochastic samples $v_i(z,t)$ is a problem in itself different from the standard particle generation in DSMC methods. Although this aspect is fundamental for the practical application of the method, to simplify the presentation, we have postponed the details of the approach used in Appendix B.1. Similarly, the convergence properties of the resulting gPC expansion of the samples are analyzed in Appendix B.2.

To define the DSMC-sG algorithm we consider the projection on the above space of the collision process in the DSMC method. In the case of the uncertain Boltzmann collision term \eqref{eq:BMMi} we have
\begin{eqnarray*}
v_i'(z,t) &=& \frac12(v_i(z,t)+v_j(z,t))+\frac12 |v_i(z,t)-v_j(z,t)|\omega,\\
v_j'(z,t) &=& \frac12(v_i(z,t)+v_j(z,t))-\frac12 |v_i(z,t)-v_j(z,t)|\omega.
\end{eqnarray*}
Let us observe that
\be
|v_i'(z,t)-v_j'(z,t)| =  |v_i(z,t)-v_j(z,t)|,
\label{eq:mrv}
\ee
so that the modulus of the relative velocity is unchanged during collisions. 

We first substitute the velocities by their gPC expansion   
\begin{eqnarray*}
{v^{M}_i}^\prime(z,t) &=& \frac12(v^M_i(z,t)+v^M_j(z,t))+\frac12 |v^M_i(z,t)-v^M_j(z,t)|\,\omega,\\
{v^{M}_i}^\prime(z,t) &=& \frac12(v^M_i(z,t)+v^M_j(z,t))-\frac12 |v^M_i(z,t)-v^M_j(z,t)|\,\omega
\end{eqnarray*}
and then project by integrating against $\Phi_m(z)\,p(z)$ on $\Omega$ to get for $m=0,\ldots,M$
\begin{eqnarray}
\label{eq:c1}
\hat{v}_{i,m}^{\prime}(t) &=& \frac12(\hat{v}_{i,m}(t)+\hat{v}_{j,m}(t))+\frac12 \hat V^m_{ij}\,\omega,\\
\label{eq:c2}
\hat{v}_{j,m}^{\prime}(t) &=& \frac12(\hat{v}_{i,m}(t)+\hat{v}_{j,m}(t))-\frac12 \hat V^m_{ij}\,\omega
\end{eqnarray}
where 
\be
\hat V^m_{ij}=\int_{\Omega} |v^M_i(z,t)-v^M_j(z,t)|\Phi_m(z)\, p(z)\,dz,
\label{eq:cmt}
\ee
for each $m=0,\ldots,M$, are collision matrices consisting of $N^2$ elements.

The sG extensions of the DSMC algorithms by Nanbu for Maxwell molecules is reported below.

\begin{algorithm}[DSMC-sG for Maxwell molecules]~
\begin{enumerate}
\item Compute the initial gPC expansions $ \{v^{M,0}_i, i=1,\ldots,N\} $,\\ 
      from the initial density $f_0(v)$
\item   \begin{tabbing}
\= {\fP for} \= $n=0$  {\fP to} $\nt-1$ \\
          \>          \> given $\{\hat v_i^{m,n},i=1,\ldots,N,\,m=0,\ldots,M\}$\\
          \>       \>   \ind \= \cir \= set $N_c = \IR(\mu N\Delta t/2)$ \\
          \>       \>        \> \cir \> select $N_c$ pairs $(i,j)$ uniformly among all possible pairs, \\
          \>       \>        \> \cir \> Compute the collision matrices $\hat V^m_{ij}$,  $m=0,\ldots,M$,\\
\>       \>        \>  \> for all $N_c$ collision pairs using \eqref{eq:cmt}\\ 
          \>       \>        \>      \> - \= perform the collision between $i$ and $j$, and compute \\
          \>       \>        \>      \>      \> $\hat v_{i,m}^{\prime}$ and $\hat v_{j,m}^{\prime}$ according to \eqref{eq:c1}-\eqref{eq:c2} \\
          \>       \>        \>      \> - set $\hat v_{i,m}^{n+1} = \hat v_{i,m}^{\prime}$, $\hat v_{j,m}^{n+1}=\hat v_{j,m}^{\prime}$ \\
          \>       \>        \> \cir \> set $\hat v_{i,m}^{n+1}=\hat v_{i,m}^{n}$ for all the particles that have not been selected\\
         \> {\fP end for}
\end{tabbing}
\end{enumerate}
\end{algorithm}

\begin{remark}~
\begin{itemize}
\item The DSMC-sG algorithms just described extends naturally to other DSMC methods, including Bird's method \cite{Bi76}. We omit the details for brevity.
\item 
Note that, since $i$ and $j$ are selected at random for a total of $N_c \leq N/2$ pairs, we do not need all $N^2$ elements in the collision matrix \eqref{eq:cmt} to evaluate the collision process. For fixed values of $i$ and $j$ we approximate the vector $\hat V^m_{ij}$ by Gauss quadrature 
\be
\hat V^m_{ij}(t) \approx \sum_{h=0}^H w_h |v^M_i(z_h,t)-v^M_j(z_h,t)|\Phi_m(z_h).
\ee 
The resulting scheme requires $O(MH)$ operations to compute $v^M_i(z_h,t)$ and $v^M_j(z_h,t)$ for all $h$'s and $O(MH)$ operations to evaluate $\hat V^m_{ij}(t)$ for all $m$'s. Taking $H=M$ the total cost of a Monte Carlo collision at each time step is therefore $O(M^2)$, and the total cost of a DSMC-sG time step is $O(M^2 N_c)$. 
\end{itemize}
\label{rk:1}
\end{remark}

\section{DSMC-sG for hard spheres with uncertainties}
The DSMC methods have to be modified when the scattering cross
section is not constant. First we describe the so called \emph{dummy collision} technique \cite{Bi76, Na83, Par2} in the 
deterministic setting and then we discuss how to extend the DSMC-sG methodology to this general case. 

\subsection{Classical DSMC method} To this aim let us first consider the deterministic case 
for variable hard spheres (VHS) molecules where the collision kernel has the form
\be
B(z,v,v_*,\omega)=B(\vert v
- v_{\ast} \vert),
\ee
and satisfies a cut-off hypothesis, which is
essential from a numerical point of view.

We will denote by $Q_\Sigma(f,f)$ the collision operator obtained
by replacing the kernel $ B$ with the kernel $ B_\Sigma${\[
 B_\Sigma(\vert v - v_ {\ast}\vert) =\min\left\{ B(\vert v
-
  v_{\ast} \vert),\Sigma\right\},\quad \Sigma > 0.
\]}
Thus, for a fixed $\Sigma$, let us consider the homogeneous
problem {\[ {{\partial f}\over {\partial t}} =
 Q_\Sigma(f, f). \label{eq:HOM3}
\]}
The operator $Q_\Sigma(f, f)$ can be written in the form
$P(f,f)-\mu f$ taking {\[ P(f,f)= Q^{+}_\Sigma(f,f) +
f(v)\int_{\RR^{d_v}} \int_{S^{d_v-1}} [\Sigma- B_\Sigma(\vert v - v_{\ast}
\vert)] f(v_{\ast})\, d\omega\,dv_{\ast}, \label{eq:PB}
\]}
with $\mu=2^{d_v-1}\pi\Sigma$ and
{\[ Q^{+}_\Sigma(f,f)=
 \int_{\RR^{d_v}} \int_{S^{d_v-1}}
 B_\Sigma(\vert v - v_{\ast} \vert)
f(v^{\prime})f(v_{\ast}^{\prime})\, d\omega\,dv_{\ast}.
\]}
The generalization of the DSMC scheme is obtained by
using the acceptance-rejection technique to sample the post
collisional velocity according to $P(f,f)/\mu$.

\begin{algorithm}[Nanbu-Babovski for VHS molecules]~
\begin{enumerate}
\item Compute the initial velocities $ \{v_i^0, i=1,\ldots,N\} $,\\ 
      by sampling them from the initial density $f_0(v)$ 
\item   \begin{tabbing}
\= {\fP for} \= $n=0$  {\fP to} $\nt-1$ \\
          \>          \> given $\{v_i^n,i=1,\ldots,N\}$\\
          \>       \>   \ind \= \cir \= compute an upper bound $\Sigma$ of the cross section \\
          \>       \>   \ind \= \cir \= set $\mu=2^{d-1}\pi\Sigma$ and $N_c = \IR(\mu N\Delta t/2)$ \\
                    \>       \>        \> \cir \> select $N_c$ dummy collision pairs $(i,j)$ uniformly  \\
          \>       \>        \>      \> among all possible pairs, and for those \\
          \>       \>        \>      \> - \= compute the relative cross section $ B_{ij} =  B(|v_i-v_j|)$ \\
          \>       \>        \>      \> - \> {\fP if} $\Sigma\,  \rand <  B_{ij}$, $\xi$ uniform in $(0,1)$ \\
          \>       \>        \>      \>  \ind \= perform the collision between $i$ and $j$, and compute \\
          \>       \>        \>      \>      \> $v_i'$ and $v_j'$ according to the  collision law \\
          \>       \>        \>      \> \>  set $v_i^{n+1} = v_i'$, $v_j^{n+1}=v_j'$ \\
          \>       \>        \> \cir \> set $v_i^{n+1}=v_i^n$ for all the particles that have not been selected\\
         \> {\fP end for}
\end{tabbing}
\end{enumerate}
\end{algorithm}

The upper bound $\Sigma$ should be chosen as small as possible, to
avoid inefficient rejection, and it should be computed fast. It is
be too expensive to compute $\Sigma$ as {\[
   \Sigma =  B_{\mbox{\small max}} \equiv \max_{ij}  B(|v_i-v_j|),
\]}
since this computation would require an $O(N^2)$ operations.
An upper bound of $  B_{\mbox{\small max}}$ is obtained by taking
$\Sigma  =  B(2\Delta v)$, where {\[ \Delta v = \max_i
|v_i-\bar{v}|, \quad \bar{v}:=\frac1N\sum_i v_i.
\]}

\subsection{DSMC-sG for VHS molecules}\label{sect:VHS}

The extension of the DSMC-sG algorithm to the VHS case is not straightforward due to the acceptance-rejection process which in general depends on the random parameter $z$. In the sequel we will consider
\be
B(z,v,v_*,\omega)=B(z,\vert v
- v_{\ast} \vert),
\ee
and denote by $Q_\Sigma(f,f)$ the collision operator obtained
by replacing the kernel $ B$ with the kernel $ B_\Sigma${\[
 B_\Sigma(z,\vert v - v_ {\ast}\vert) =\min\left\{ B(z,\vert v
-
  v_{\ast} \vert),\Sigma\right\},\quad \Sigma > 0.
\]}
Given a random number $\rand$ uniform in $(0,1)$, we rewrite the acceptance-rejection collision process in the equivalent form
\begin{equation}
\label{eq:binary_VHS}
\begin{split}
v_i'(z,t) &= v_i(z,t) - \frac12\Psi(\Sigma\,  \rand <  B_{ij}(z))\left((v_i(z,t)-v_j(z,t))- |v_i(z,t)-v_j(z,t)|\omega\right),\\
v_j'(z,t) &= v_j(z,t)+ \frac12\Psi(\Sigma\,  \rand <  B_{ij}(z))\left((v_i(z,t)-v_j(z,t))- |v_i(z,t)-v_j(z,t)|\omega\right),
\end{split}
\end{equation} 
where $\Psi(\cdot)$ is the indicator function and
\[
B_{ij}(z)=B(z,|v_i(z,t)-v_j(z,t)|).
\]
Note that \eqref{eq:mrv} still holds true. Thanks to the new formulation, we can perform the projection on the space of modes in the gPC expansion to get for $m=0,\ldots,M$
\begin{eqnarray}
\label{eq:c1v}
\hat{v}_{i,m}^{\prime}(t) &=& \hat{v}_{i,m}(t)-\frac12 \hat W^m_{ij}(\rand)+\frac12 \hat V^m_{ij}(\rand)\,\omega,\\
\label{eq:c2v}
\hat{v}_{j,m}^{\prime}(t) &=& \hat{v}_{j,m}(t)+\frac12 \hat W^m_{ij}(\rand)-\frac12 \hat V^m_{ij}(\rand)\,\omega,\end{eqnarray}
where now
\begin{eqnarray}
\label{eq:cmt1}
\hat W^m_{ij}(\rand) &=& \int_{\Omega} \Psi(\Sigma\,  \rand <  B_{ij}(z))\left(v^M_i(z,t)-v^M_j(z,t)\right) \Phi_m(z) p(z)\,dz,\\
\label{eq:cmt2}
\hat V^m_{ij}(\rand) &=& \int_{\Omega} \Psi(\Sigma\,  \rand <  B_{ij}(z))|v^M_i(z,t)-v^M_j(z,t)| \Phi_m(z) p(z)\,dz.
\end{eqnarray}
The above quantities are computed at each collision for a given $i$, $j$ and $\rand$. Using Gauss quadrature we have
\begin{eqnarray*}
\hat W^m_{ij}(\rand) &\approx & \sum_{h=0}^H w_h \Psi(\Sigma\,  \rand <  B^M_{ij}(z_h)) \left(v^M_i(z_h,t)-v^M_j(z_h,t)\right)\Phi_m(z_h)\\
\hat V^m_{ij}(\rand) &\approx & \sum_{h=0}^H w_h \Psi(\Sigma\,  \rand <  B^M_{ij}(z_h)) |v^M_i(z_h,t)-v^M_j(z_h,t)|\Phi_m(z_h),
\end{eqnarray*} 
where
\be
B^M_{ij}(z_h) = B(z,|v^M_i(z,t)-v^M_j(z,t)|).
\ee
Note that, similarly to Remark \ref{rk:1}, for given $i$, $j$ and $\rand$ the quantities $\hat W^m_{ij}(\rand)$ and $\hat V^m_{ij}(\rand)$ can be computed at a cost $O(MH)$. 

We can summarize the DSMC-sG method for VHS molecules as follows.

\begin{algorithm}[DSMC-sG for VHS molecules]~
\begin{enumerate}
\item Compute the initial gPC expansions $ \{v^{M,0}_i, i=1,\ldots,N\} $,\\ 
      from the initial density $f_0(v)$      
\item   \begin{tabbing}
\= {\fP for} \= $n=0$  {\fP to} $\nt-1$ \\
          \>       \> given $\{\hat v_{i,m}^{n},i=1,\ldots,N,\,m=0,\ldots,M\}$\\
          \>       \>   \ind \= \cir \= compute an upper bound $\Sigma$ of the cross section \\
          \>       \>   \ind \= \cir \= set $\mu=2^{d-1}\pi\Sigma$ and $N_c = \IR(\mu N\Delta t/2)$ \\
          \>       \>        \> \cir \> select $N_c$ dummy collision pairs $(i,j)$ uniformly  \\
          \>       \>        \>      \> among all possible pairs and for those \\
          \>       \>        \>      \> - \= Select $\rand$ uniformly in $(0,1)$\\
          \>       \>        \>      \> - \= Compute the collision matrices $\hat W^m_{ij}(\rand)$, $\hat V^m_{ij}(\rand)$, $i,j=1,\ldots,N$,\\
          \>       \>        \>      \>   \> $m=0,\ldots,M$, using \eqref{eq:cmt1}-\eqref{eq:cmt2}. \\
          \>       \>        \>      \> - \= perform the dummy collision between $i$ and $j$, computing \\
          \>       \>        \>      \>      $\hat v_{i,m}^{\prime}$ and $\hat v_{j,m}^{\prime}$ according to \eqref{eq:c1v}-\eqref{eq:c2v} \\
          \>       \>        \>      \> - set $\hat v_{i,m}^{n+1} = \hat v_i^{\prime}$, $\hat v_{j,m}^{n+1}=\hat v_{j,m}^{\prime}$ \\
          \>       \>        \> \cir \> set $\hat v_{i,m}^{n+1}=\hat v_{i,m}^{n}$ for all the particles that have not been selected\\
         \> {\fP end for}
\end{tabbing}
\end{enumerate}
\end{algorithm}

In the reformulation of the acceptance-rejection algorithm we introduced the indicator function $\Psi(\cdot)$. Nevertheless, in the stochastic Galerkin framework we obtain spectral approximation only for smooth functions. Therefore, the microscopic binary collision term \eqref{eq:binary_VHS} may deteriorate the overall convergence of the DSMC-sG scheme.  In order to overcome this problem one can consider the following regularization 
\begin{equation}
\label{eq:binary_sig}
\begin{split}
v_i'(z,t) &= v_i(z,t) - \frac12 K( \beta(\Sigma\,  \rand-B_{ij}(z)))\left((v_i(z,t)-v_j(z,t))- |v_i(z,t)-v_j(z,t)|\omega\right),\\
v_j'(z,t) &= v_j(z,t)+ \frac12 K(\beta(\Sigma\,  \rand-B_{ij}(z)))\left((v_i(z,t)-v_j(z,t))- |v_i(z,t)-v_j(z,t)|\omega\right),
\end{split}
\end{equation} 
where $K(\beta(\cdot))$ is a sigmoid function dependent on the parameter $\beta\gg0$. These regularized acceptance-rejection collision process \eqref{eq:binary_sig}, however, introduces a dissipation of the energy. To overcome this issue and keep exact energy and momentum conservation one can couple \eqref{eq:binary_sig} with a thermalization process of the form
\begin{equation}\label{eq:binary_terma}
v_i^{\prime\prime}(z,t) = (v_i^{\prime}(z,t) - u(z)) \sqrt{\dfrac{T(z)}{T^\prime(z)}}+u(z), \qquad
v_j^{\prime\prime}(z,t) = (v_j^{\prime}(z,t) -u(z)) \sqrt{\dfrac{T(z)}{T^\prime(z)}}+u(z),
\end{equation}
being $u(z)$ the mean velocity and $T(z), T^\prime(z)$ the pre-collision and post-collision temperatures.


\begin{remark}
The generalization of the above DSMC-sG algorithm for VHS to other Monte Carlo methods is not straightforward as in the case of Maxwell molecules. In particular, in the case of Bird's algorithm we must deal with the additional difficulty of a local time step which depends from $z$. Here we will not discuss further this aspect that will be the subject of future investigations.  
\end{remark}

\section{Numerical tests}\label{sect:num}
In this section we present several numerical tests for the novel DSMC-sG algorithms applied to classical space homogeneous collisional kinetic equations of the type \eqref{eq:boltzmann}-\eqref{eq:Q}. In more details, we first compare numerical and exact solutions of the Kac model and of the 2D model for Maxwellian molecules with uncertainties. Next, we consider the case of hard spheres and compare the performance of the scheme in several benchmark tests. 

\subsection{Test 1: Kac model}\label{sect:kac}

\begin{figure}[htb]
\centering
\subfigure[$t = 0$]{
\includegraphics[scale = 0.25]{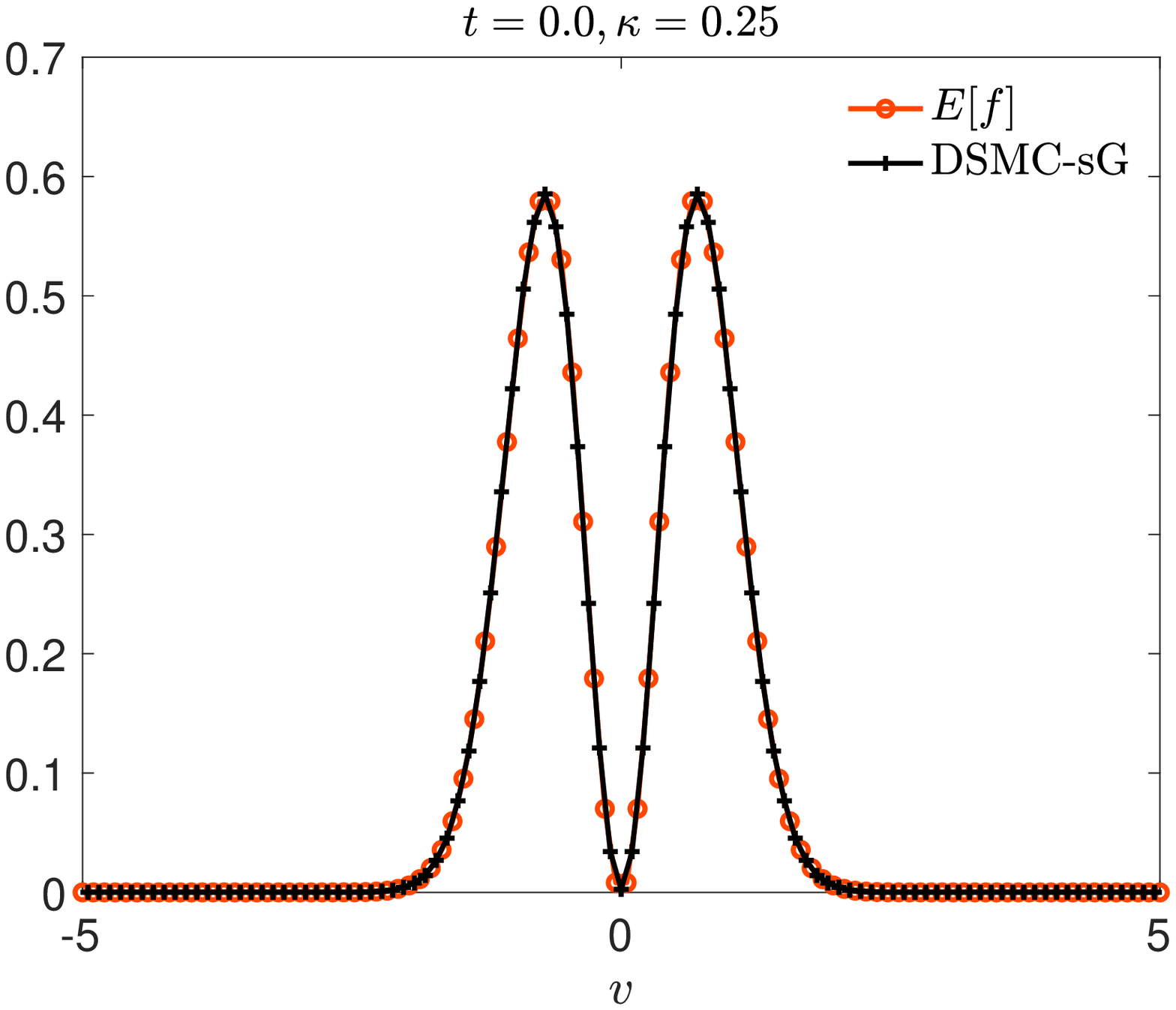}}
\subfigure[$t=1$]{
\includegraphics[scale = 0.25]{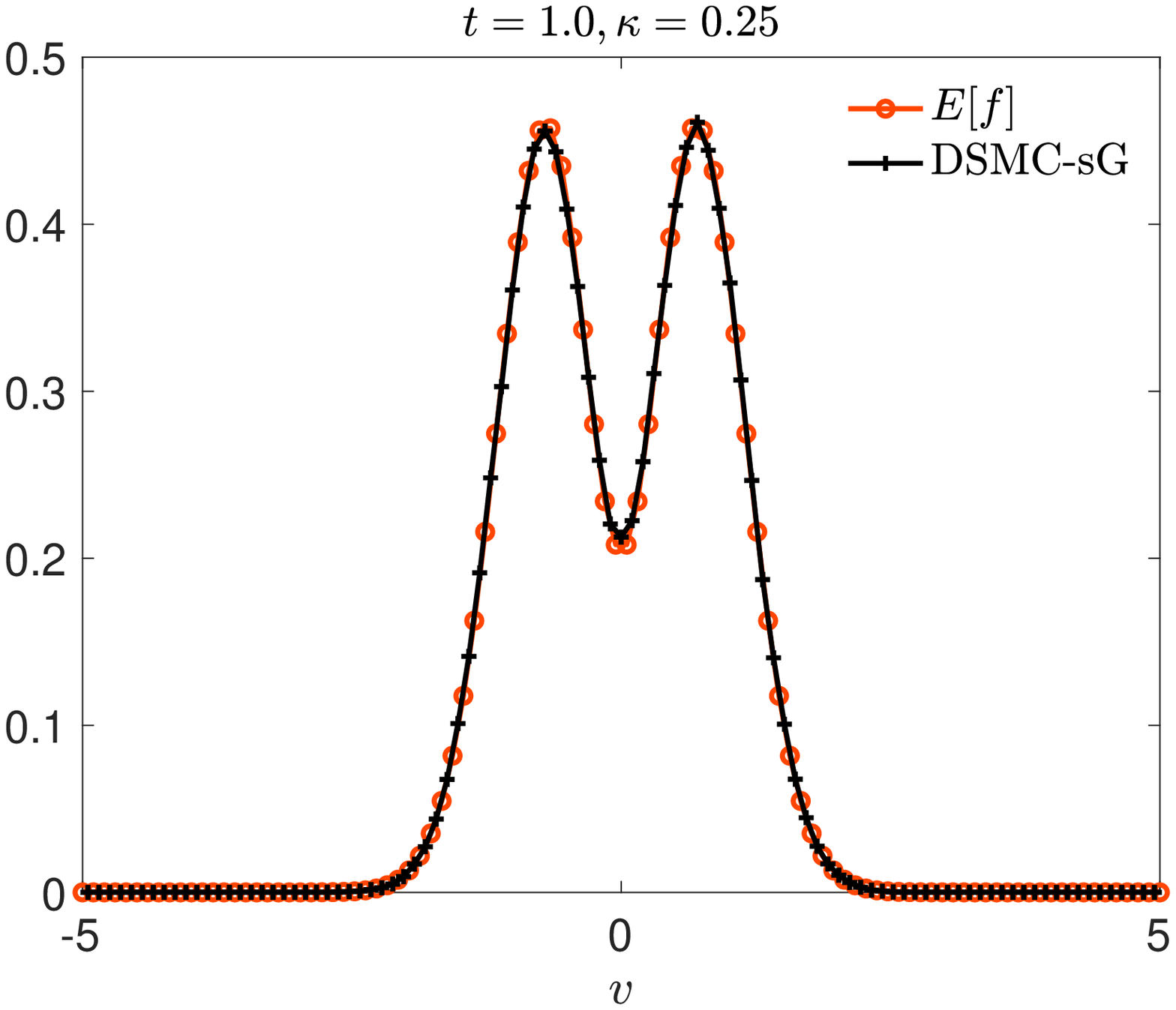}}
\subfigure[$t=5$]{
\includegraphics[scale = 0.25]{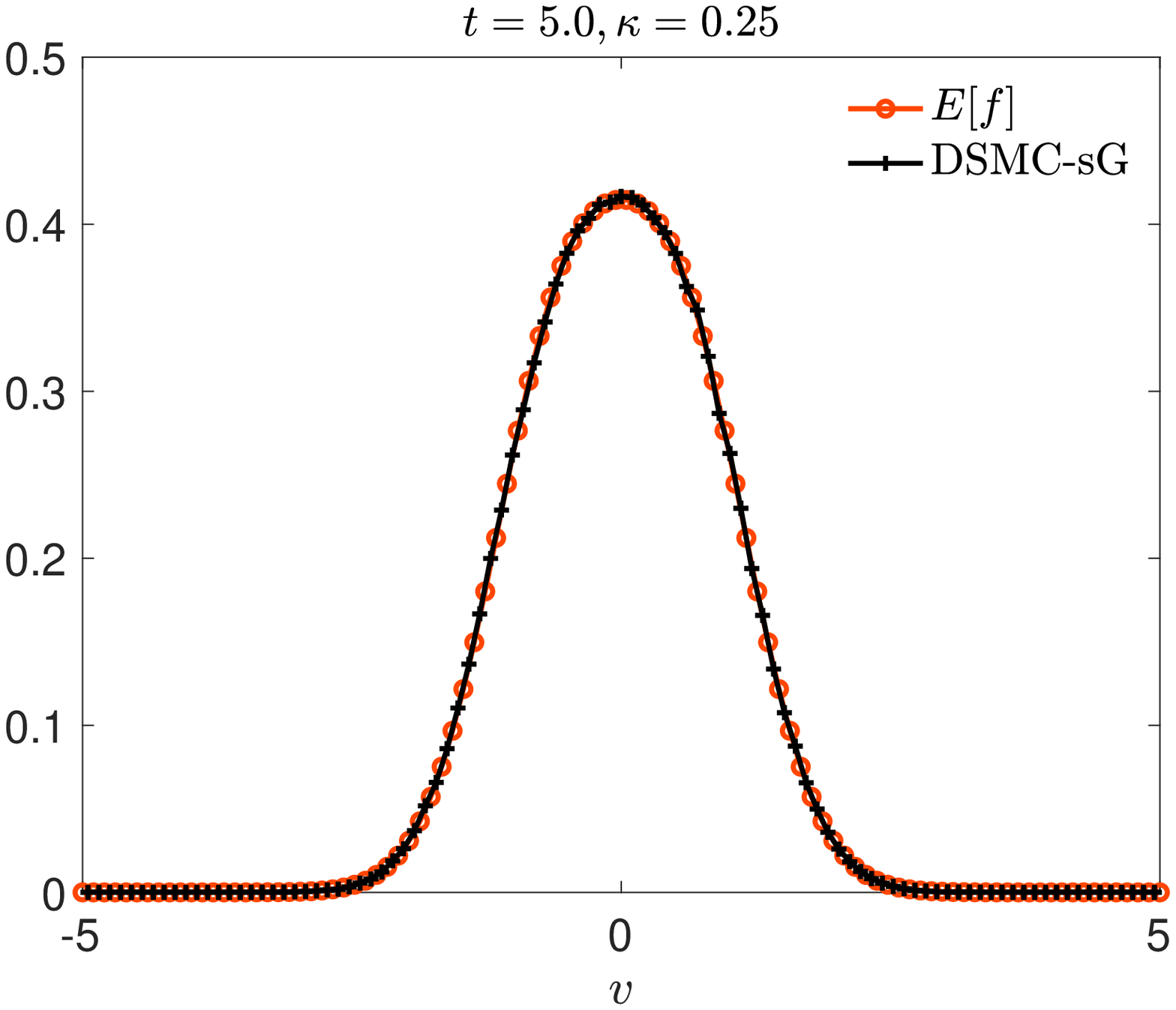}} 
\subfigure[$t=0$]{
\includegraphics[scale = 0.25]{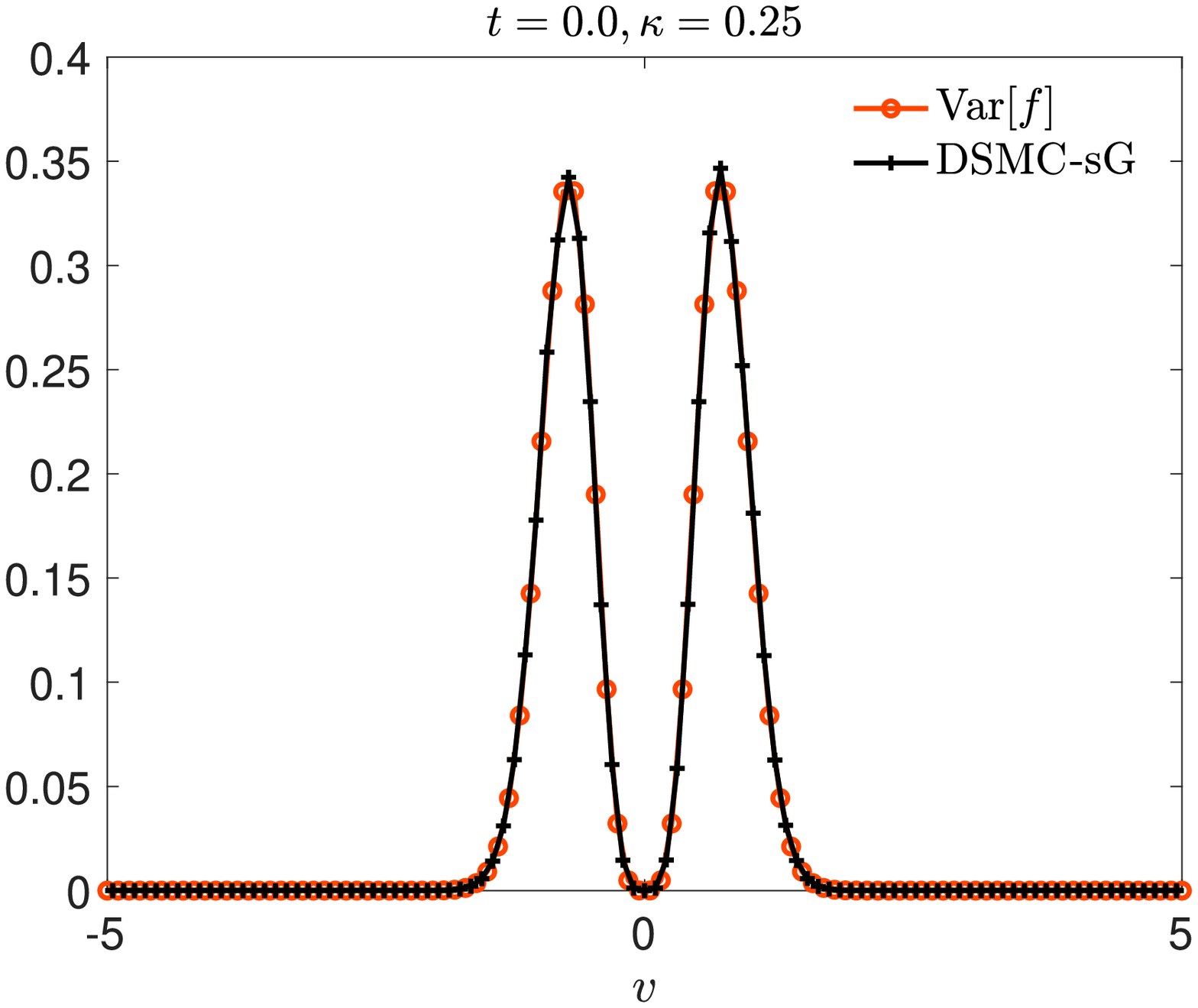}}
\subfigure[$t=1$]{
\includegraphics[scale = 0.25]{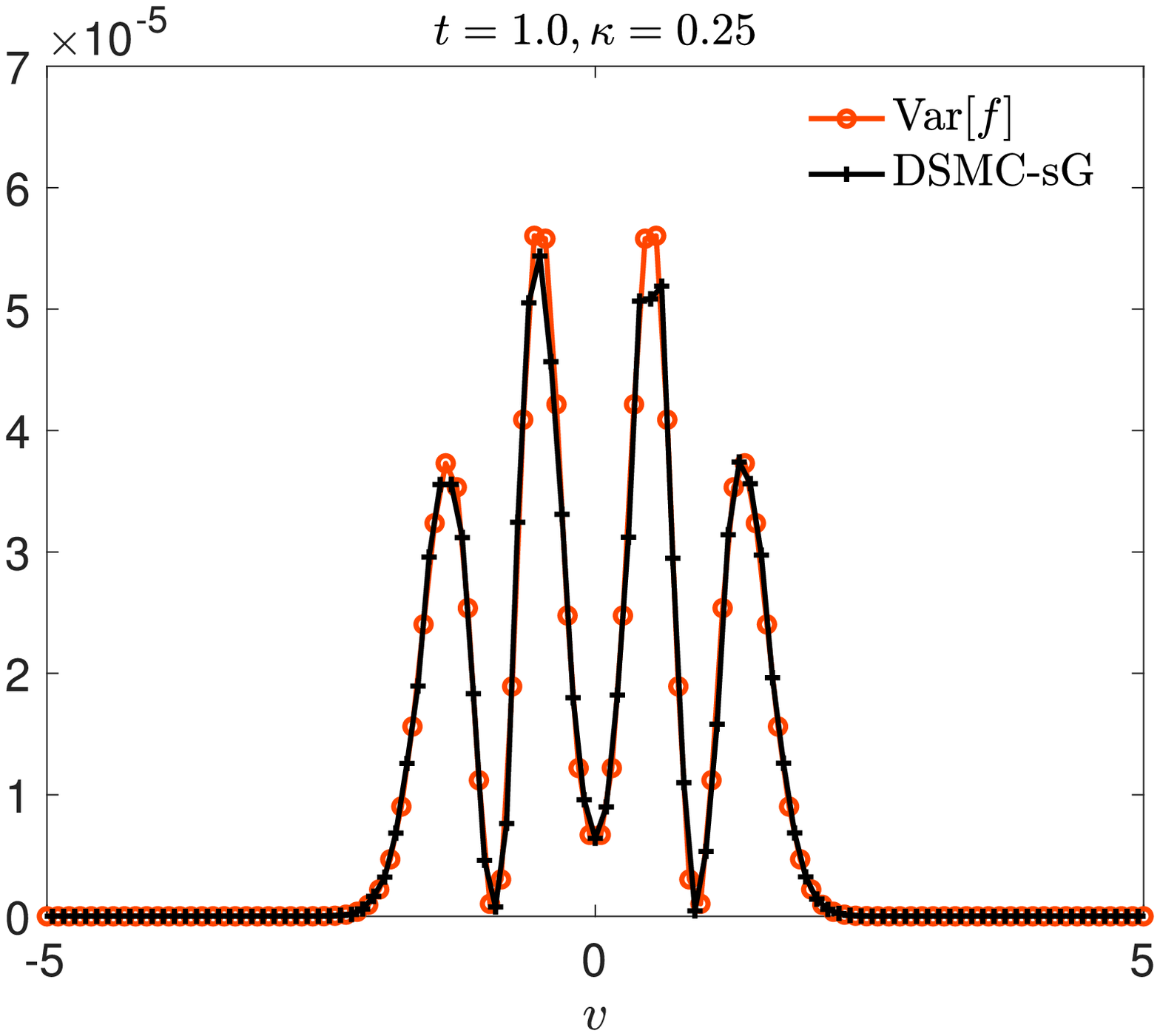}}
\subfigure[$t=5$]{
\includegraphics[scale = 0.25]{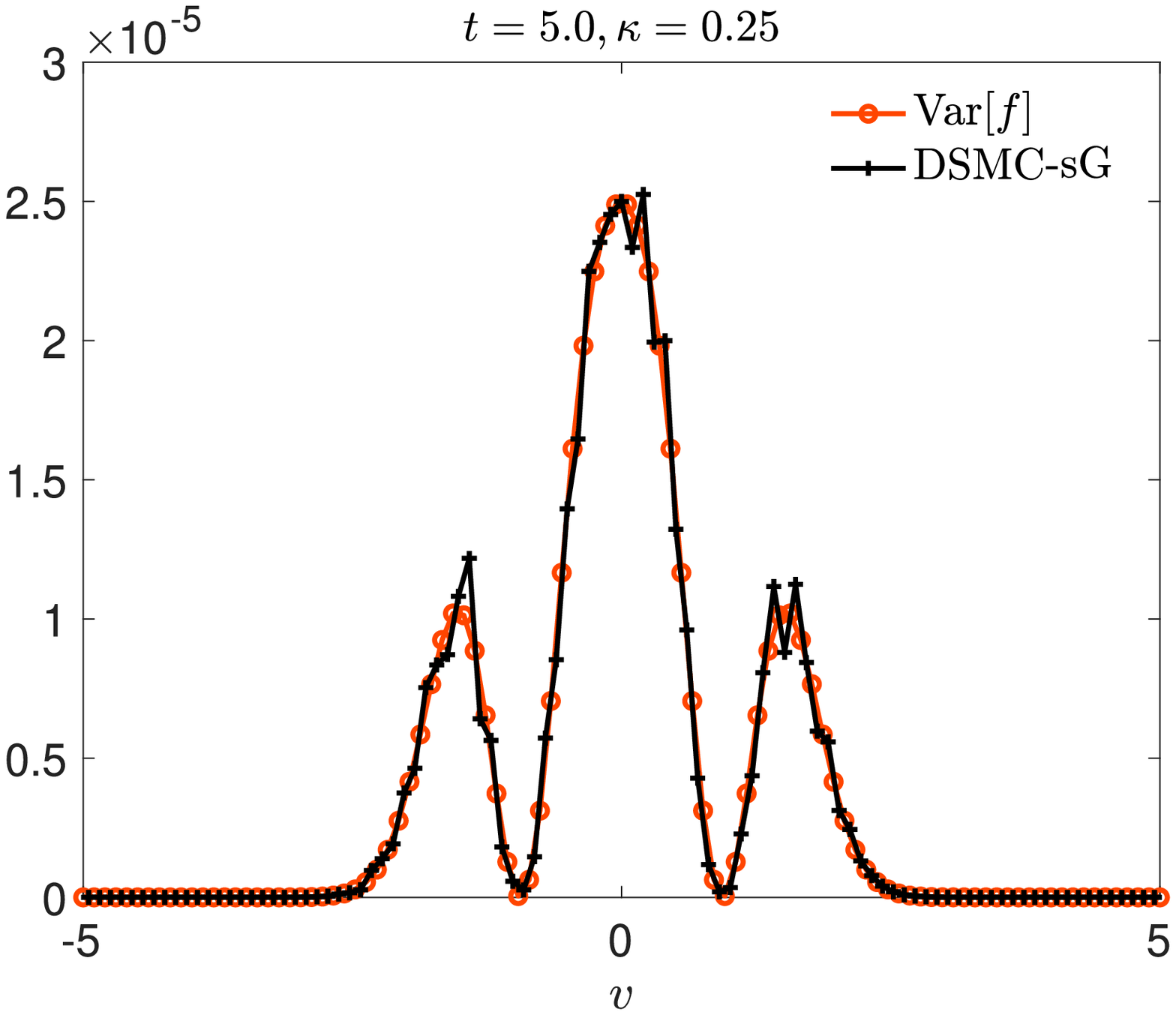}}
\caption{\textbf{Test 1}. Evolution of the exact and reconstructed expected solution (top row) and of its variance (bottom row) of the Kac model \eqref{eq:kac} with random initial temperature \eqref{eq:t0} and $\alpha(z)$ like in \eqref{eq:alpha} with $\kappa = 0.25$. We considered $N = 10^6$ particles and $M=5$ Galerkin modes whereas the time discretization of $[0, 5]$ with  $\Delta t = 10^{-1}$.  The density reconstruction in the velocity space has been performed in the interval $[-5,5]$ with $N_v = 100$ gridpoints. }
\label{fig:kac1}
\end{figure}

Let us consider first the Kac model
\begin{equation}\label{eq:kac}
\frac{\partial}{\partial t} f(z,v,t)= \dfrac{1}{2\pi} \int_0^{2\pi}\int_{\mathbb R} \left[ f(z,v^\prime,t)f(z,v_*^\prime,t)-f(z,v,t)f(z,v_*,t) \right]dv_* d\omega
\end{equation}
with binary interactions given by 
\begin{equation}
\label{eq:binary_kac}
\begin{split}
v^\prime(z) &= v(z)\cos\omega -v_*(z)\sin\omega \\
v_*^\prime(z) &= v(z)\sin\omega+ v_*(z)\cos\omega
\end{split}
\end{equation}
and $v,v_*\in \mathbb R$, $\omega \in [0,2\pi]$. In the following, we will consider an uncertain initial temperature of the system. Hence, we consider the uncertain initial distribution
\begin{equation}\label{eq:f0_kac}
f_0(z,v) = \alpha(z)\sqrt{\alpha(z)}v^2 e^{ - \alpha(z) v^2},
\end{equation}
where in particular we chose 
\begin{equation}
\label{eq:alpha}
\alpha(z) = 2 + \kappa z, \qquad z\sim \mathcal U([-1,1]). 
\end{equation}
It can be verified that mass and energy are conserved whereas momentum is not in general since it decays to zero. Anyway, since $f_0$ in \eqref{eq:f0_kac} is already centered the evolution of $f(z,v,t)$ solution of the Kac model conserves also the mean velocity defined at time $t = 0$ as $m = \int_{\mathbb R}vf_0(z,v)dv$. Therefore, mass and momentum are not dependent on the introduced uncertainty whereas the temperature reads
\begin{equation}
\label{eq:t0}
T(z) = \dfrac{1}{2} \int_{\mathbb R} v^2 f_0(z,v)dv = \dfrac{3\sqrt{\pi}}{8\alpha(z)}.
\end{equation}
Under the above assumptions we can analytically obtain the exact solution of such model as
\[
f(z,v,t) = (A(z,t) + B(z,t)v^2) e^{-s(z,t)v^2},
\]
where $A(z,t)$, $B(z,t)$, and $s(z,t)$ are given in \eqref{app:system} and  \eqref{app:s}, see Appendix \ref{app:kac} for further details. In order to test the DSMC-sG scheme we compare the reconstructed expected density with the quantity $\mathbb E[f(z,v,t)]$ which can be computed at each time step. 

\begin{figure}
\includegraphics[scale = 0.4]{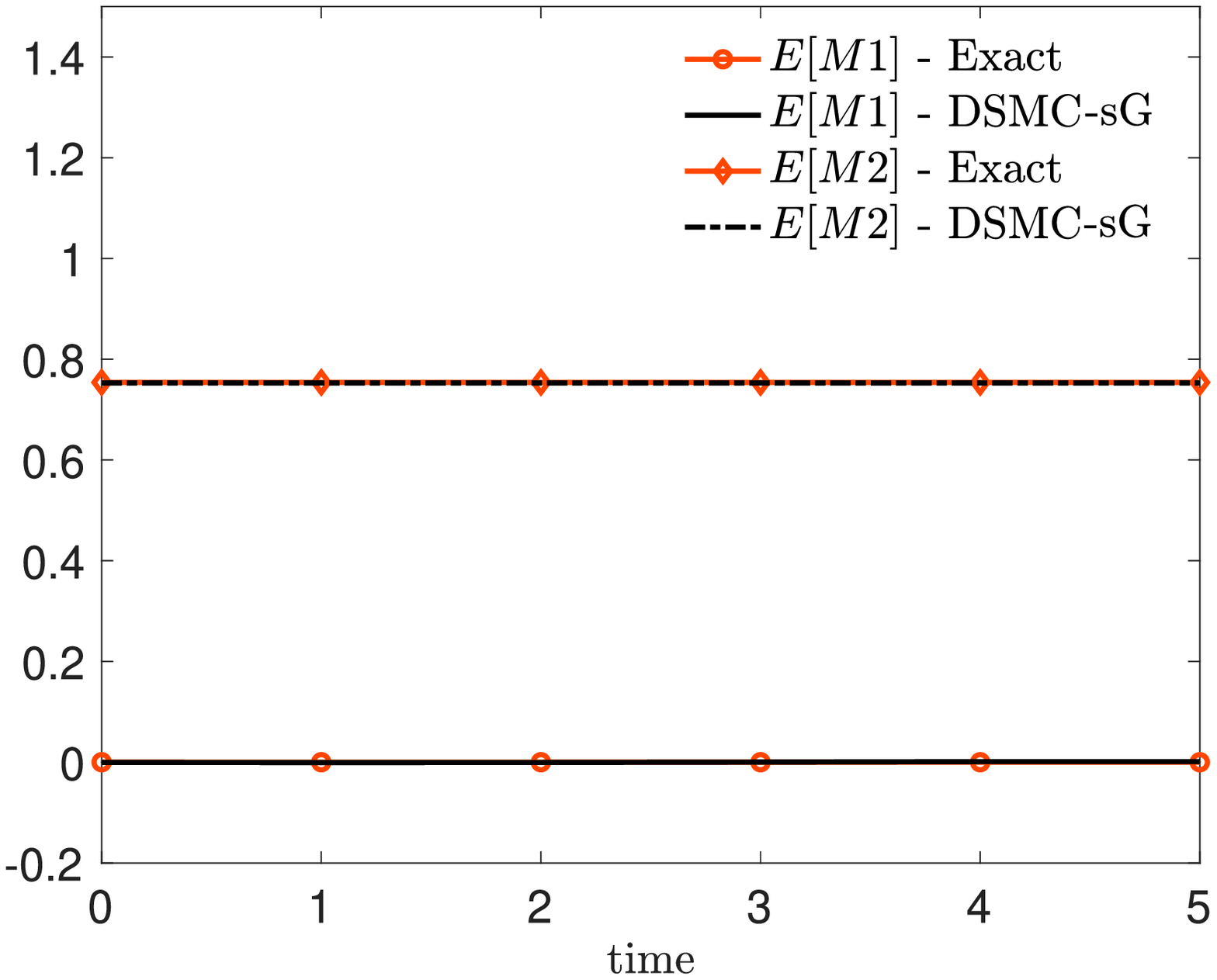}
\includegraphics[scale = 0.4]{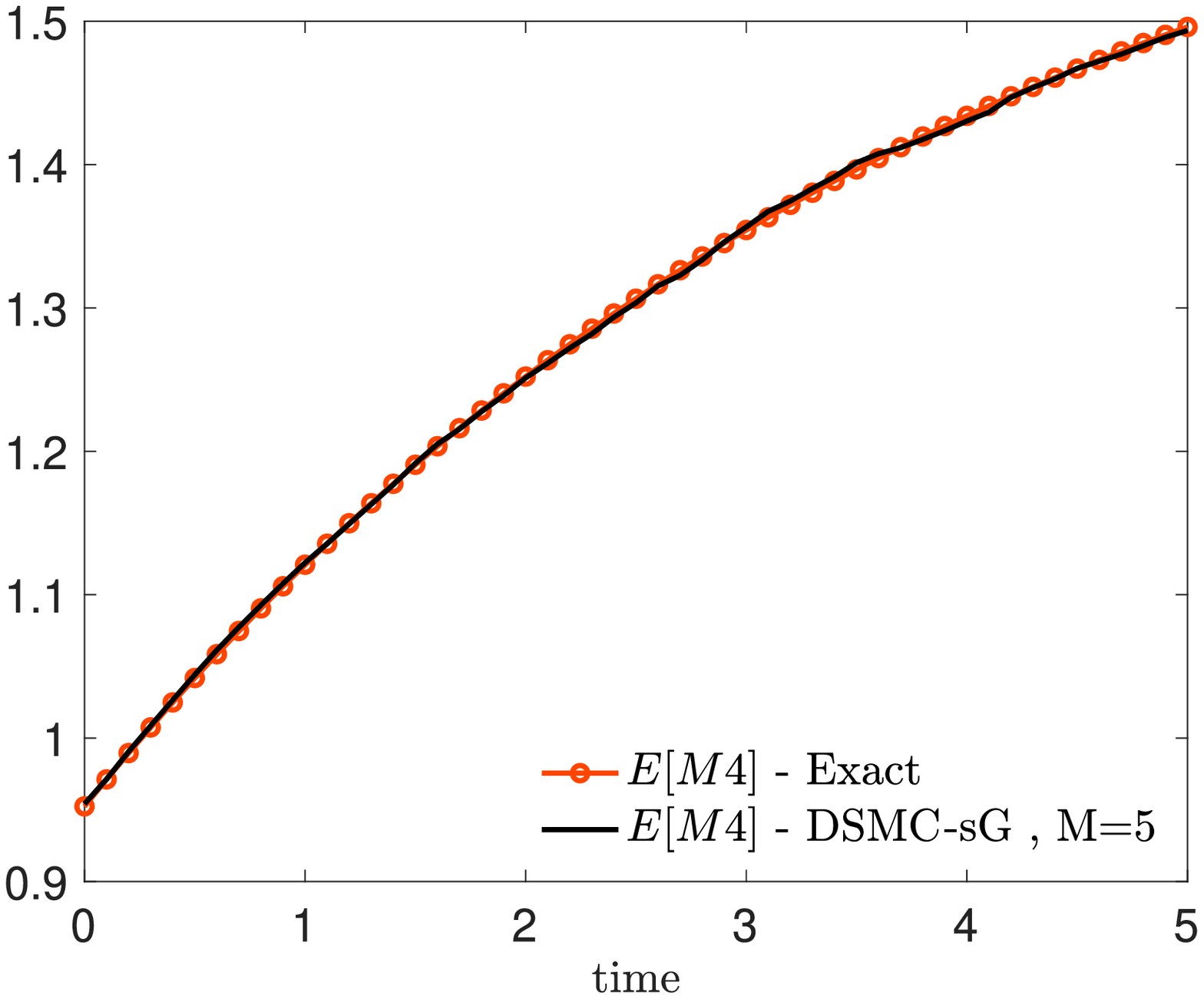}
\caption{\textbf{Test 1}. Left: evaluation of momentum and temperature from expected exact solution and the particle one. Right: evolution of the expected fourth order moment $\mathbb E[M4]$, see \eqref{eq:M4_def}. The moments are extrapolated from the evolution described in Figure \ref{fig:kac1}. }
\label{fig:kac2}
\end{figure}

In Figure \ref{fig:kac1} we report the evolution of the expected exact solution $\mathbb E[f]$ and of its reconstruction through the introduced DSMC-sG methods. In details, at the computational level we considered $N=10^6$ particles for the DSMC algorithm and $M = 5$ for the stochastic Galerkin projection. The projection of the samples from the initial distribution has been performed as described in Appendix \ref{app:particle}. Furthermore, we considered $\alpha(z)$ as in \eqref{eq:alpha} with $\kappa = 0.25$ and  $M = 5$ Galerkin modes for each particle of the method. The evolution is reported with the kinetic density reconstructed by considering the histogram on a grid in $[-5,5]$ with $N_v = 100$ gridpoints and $\Delta t = 10^{-1}$. We can clearly observe an excellent agreement in time of the DSMC-sG solution with the expected density solution of the Kac model \eqref{eq:kac}-\eqref{eq:binary_kac}.

Next, we define the $k$-order moment of the distribution $f(z,v,t)$ as
\begin{equation}
\label{eq:M4_def}
Mk(z,t) = \int_{\mathbb R} v^k f(z,v,t)dv, \qquad k \in\mathbb N, 
\end{equation}
whose approximation can be obtained at the particle gPC level as
\[
Mk^M(t) \approx \dfrac{1}{N} \sum_{i = 1}^N (v_i^M)^k(z,t),
\]
and thus, its expectation reads
\[
\mathbb E[Mk](t) \approx \dfrac{1}{N} \sum_{i = 1}^N \int_{\Omega}(v_i^M)^k(z,t)p(z)dz.
\]
In Figure \ref{fig:kac2} we represent the evolution of the expected first, second and fourth order moments, respectively $\mathbb E[M1]$, $\mathbb E[M2]$ and $\mathbb E[M4]$. We plot their evolution obtained from the exact solution of the model and its approximation through DSMC-sG scheme with $N = 10^6$ particles and $M = 5$ degree polynomials for the stochastic Galerkin approximation. It can be easily observed how the DSMC-sG method preserves exactly the conserved quantities and captures very well the evolution of the fourth order moment.

Finally, in order to show the spectral convergence property of the scheme we consider a reference DSMC-sG evolution of $M4^M(z,t)$ obtained with $N = 10^6$, $\Delta t = 0.1$ and stochastic Galerkin scheme up to order $M = 25$. Hence, if we store the collisional tree generating the reference solution, we may check the $L^2$ convergence of the DSMC-sG algorithm. In Figure \ref{fig:kac3} we present the decay of the $L^2 $ error for increasing $M$ obtained from the initial distribution \eqref{eq:f0_kac} with $\alpha(z)$ like in \eqref{eq:alpha}, where $\kappa = 0.25$ and $\kappa= 0.75$. In details, the left figure we show the error produced by the DSMC-sG algorithm at fixed time $T = 5$ whereas, in the right figure, we show in the semilogarithmic scale the obtained error in the whole time interval $[0,5]$. The spectral accuracy of the DSMC-sG approach in the parameter space appears clearly from these numerical results.

\begin{figure}
\includegraphics[scale = 0.4]{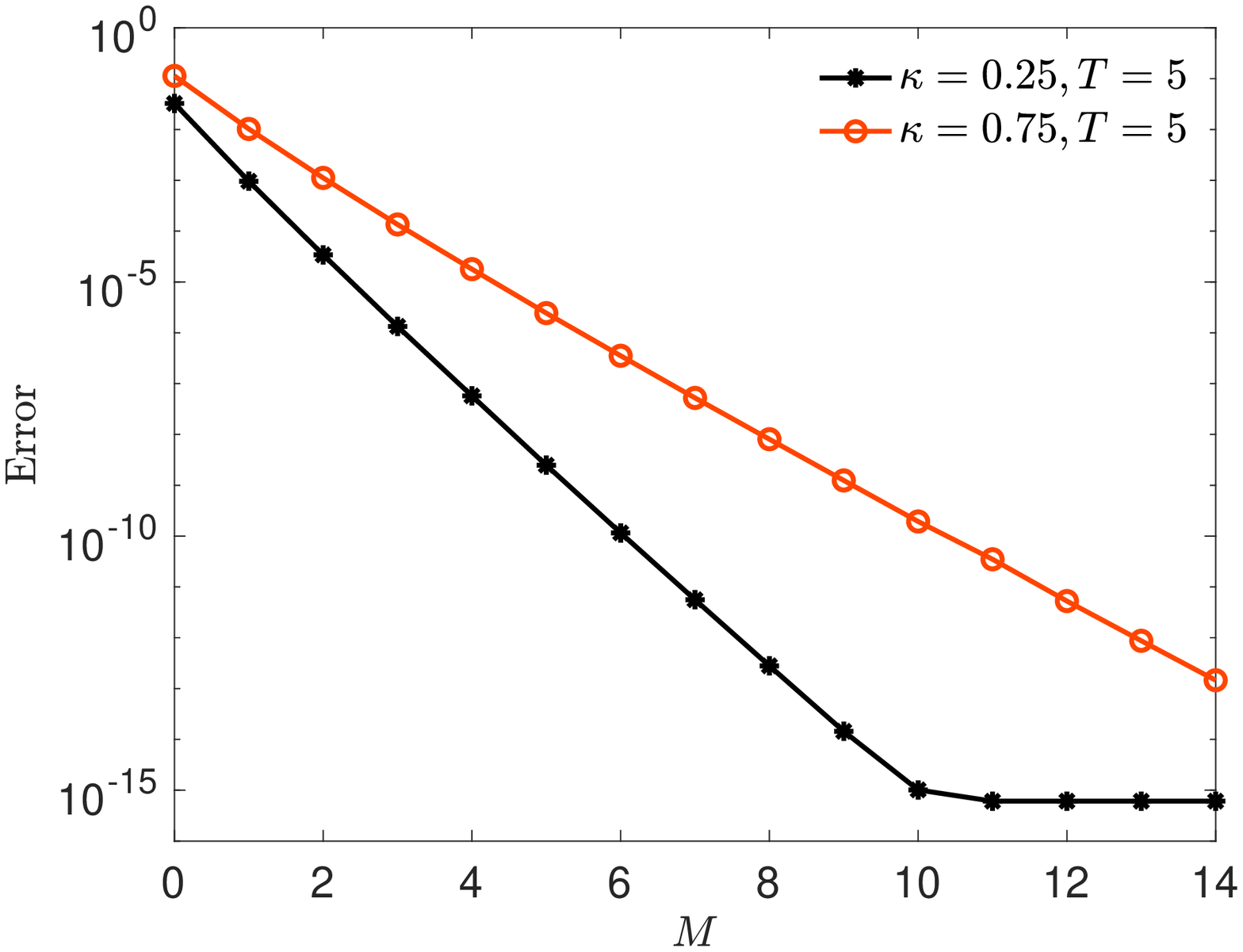}
\includegraphics[scale = 0.4]{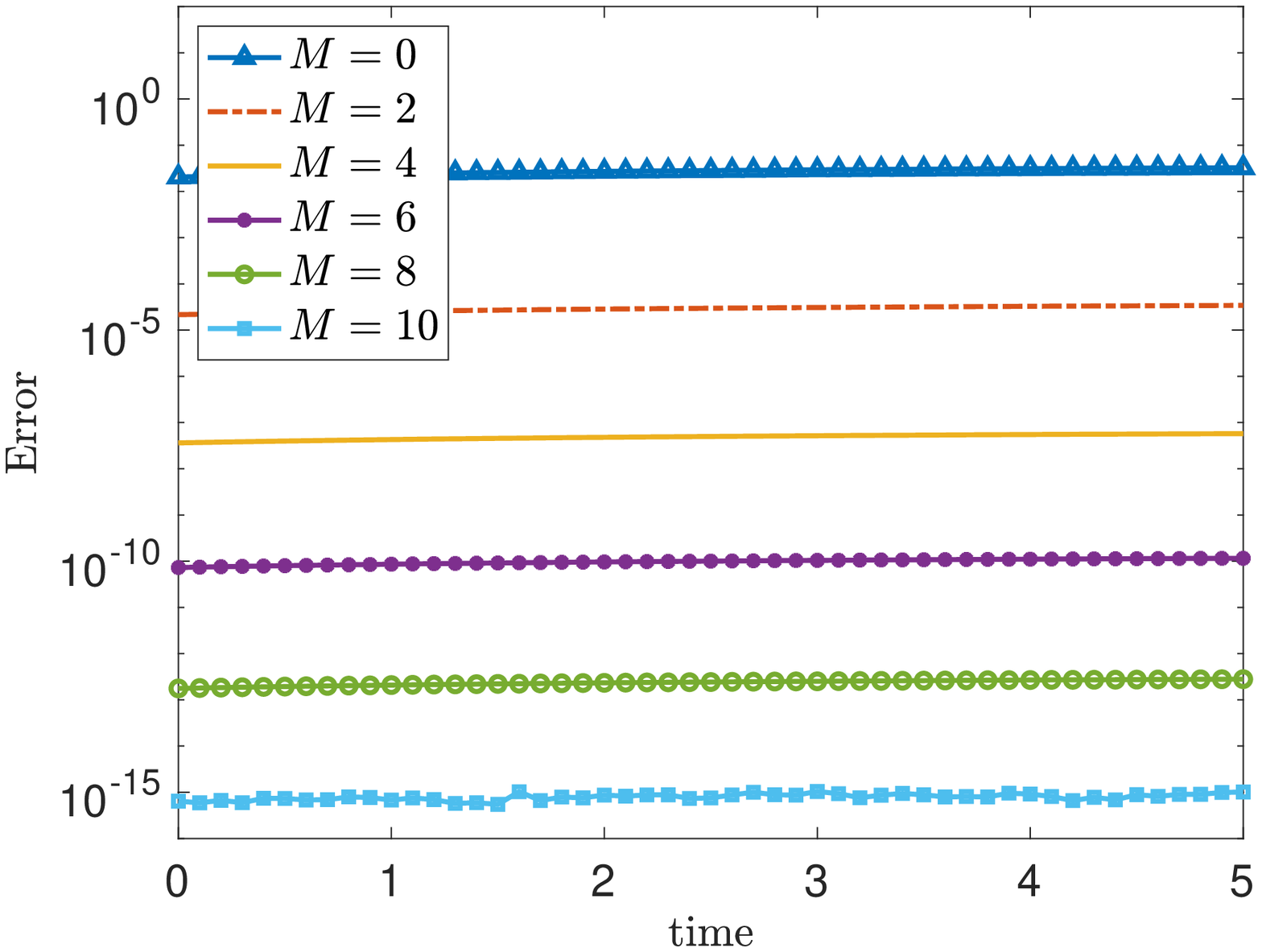}
\caption{\textbf{Test 1}. Left: $L^2$ error in the evaluation of $M4^M$ with respect to a reference solution at time $T = 5$. Right: evolution of the same error in the whole time interval $[0, 5]$. In both cases we considered $N = 10^6$ and $\Delta t = 0.1$ and the errors have been computed for two different $\kappa= 0.25$ and $\kappa = 0.75$ in the definition of the initial distribution \eqref{eq:f0_kac} .  }
\label{fig:kac3}
\end{figure}

\subsection{Test 2: 2D Maxwell model}\label{sect:2DMax}

We consider a 2D Boltzmann model \eqref{eq:boltzmann}-\eqref{eq:Q} with $B\equiv 1$ and binary interactions \eqref{eq:binary} where  
\[
\omega = \left( 
\begin{matrix}
\cos\theta \\
\sin\theta
\end{matrix}\right), \qquad \theta = 2\pi \xi, \qquad \xi \sim \mathcal U([0,1]). 
\]
We consider an uncertain initial distribution $v = (v_x,v_y)\in\mathbb R^2$
\begin{equation}
\label{eq:f0_max}
f_0(z,v) = \dfrac{\alpha^2(z) \mathbf v^2}{\pi} e^{-\alpha(z)\mathbf v^2 }, \qquad \mathbf v = \sqrt{v_x^2 + v_y^2}, 
\end{equation}
so that $f_0$ has the uncertain temperature 
\begin{equation}
\label{eq:T_2D}
T(z) = \dfrac{1}{\alpha(z)}. 
\end{equation}
An exact solution is given by (see Appendix \ref{app:2D})
\begin{equation}\label{eq:exact_max}
f(z,v,t) = \dfrac{1}{2\pi s(z,t)} \left[ 1 - \dfrac{1-\alpha(z)s(z,t)}{\alpha(z)s(z,t)} \left(1-\dfrac{\mathbf v^2}{2s(z,t)}  \right)\right]e^{-\frac{\mathbf v^2}{2s(z,t)}},
\end{equation}
where $s(z,t) = \dfrac{2 -e^{-t/8}}{2\alpha(z)}$. We will consider $\alpha(z) = 2+\kappa z$, with $z \sim \mathcal U([-1,1])$. 

The projection of the initial samples from the initial 2D distribution has been performed as described in Algorithm \ref{alg:samp0}.

In Figure \ref{fig:max1} and \ref{fig:max2} we report the contour plots of the expected solution and its variance at the initial time $t = 0$ and at time $t = 5$. In details, we compare the evolution of $\mathbb E[f](v,t)$ and $\textrm{Var}(f)(v,t)$ computed through the exact solution \eqref{eq:exact_max} and the DSMC-sG algorithm. A initial set of $N = 10^6$ particles distributed like $f_0(z,t)$ in \eqref{eq:f0_max} has been considered with a number of Galerkin projections $M = 5$.  In particular, for the reconstruction step we used a cartesian mesh in $[-L,L]^2$, $L=5$, composed by $N_v \times N_v = 100^2$ gridpoints. We observe again a very good agreement between the DSMC-sG algorithm and the exact solution of the kinetic model. 

\begin{figure}
\centering
\subfigure[$\mathbb E(f)$ Exact, $t=0$]{
\includegraphics[scale = 0.38]{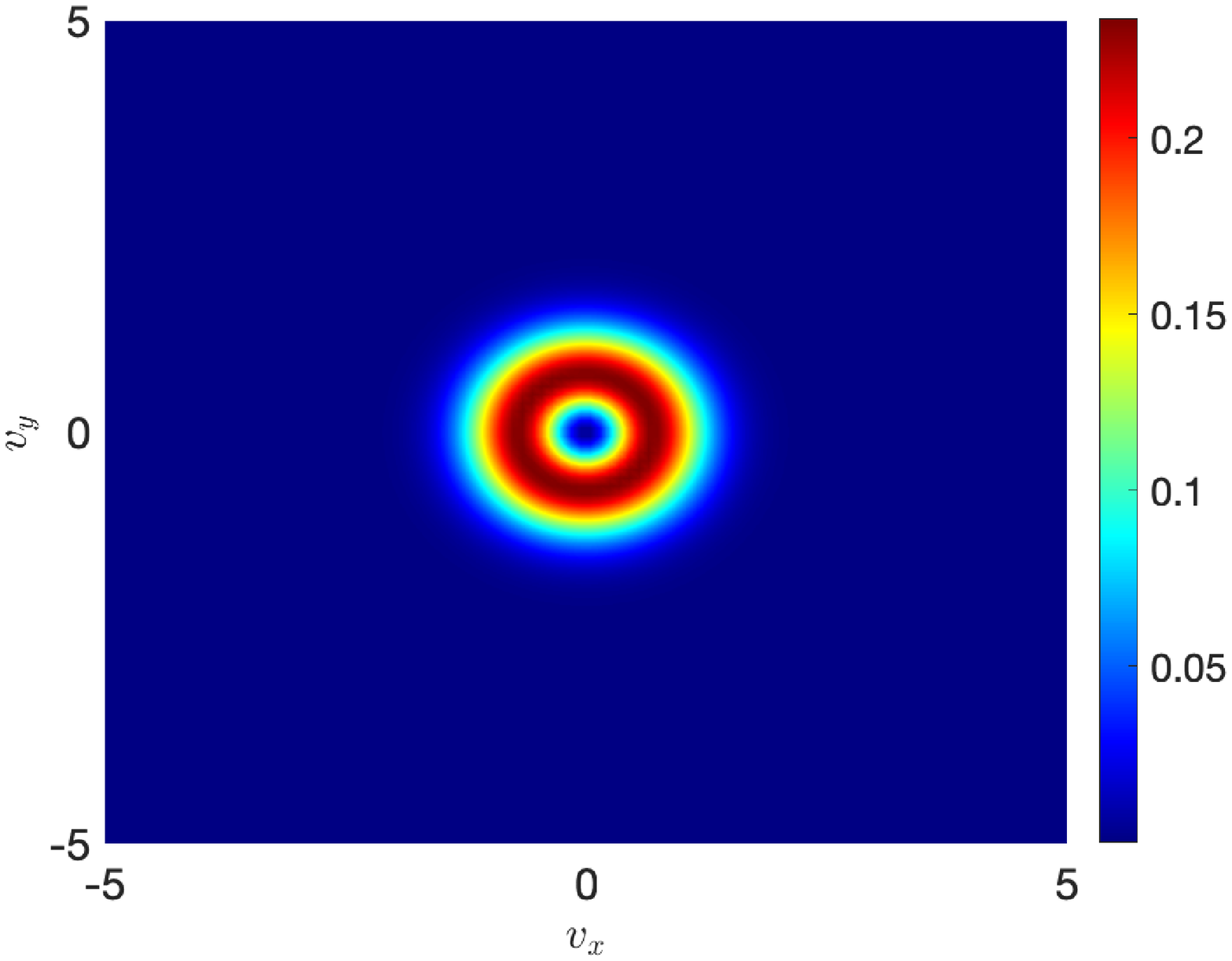}}
\subfigure[$\mathbb E(f)$ DSMC-sG, $t=0$]{
\includegraphics[scale = 0.38]{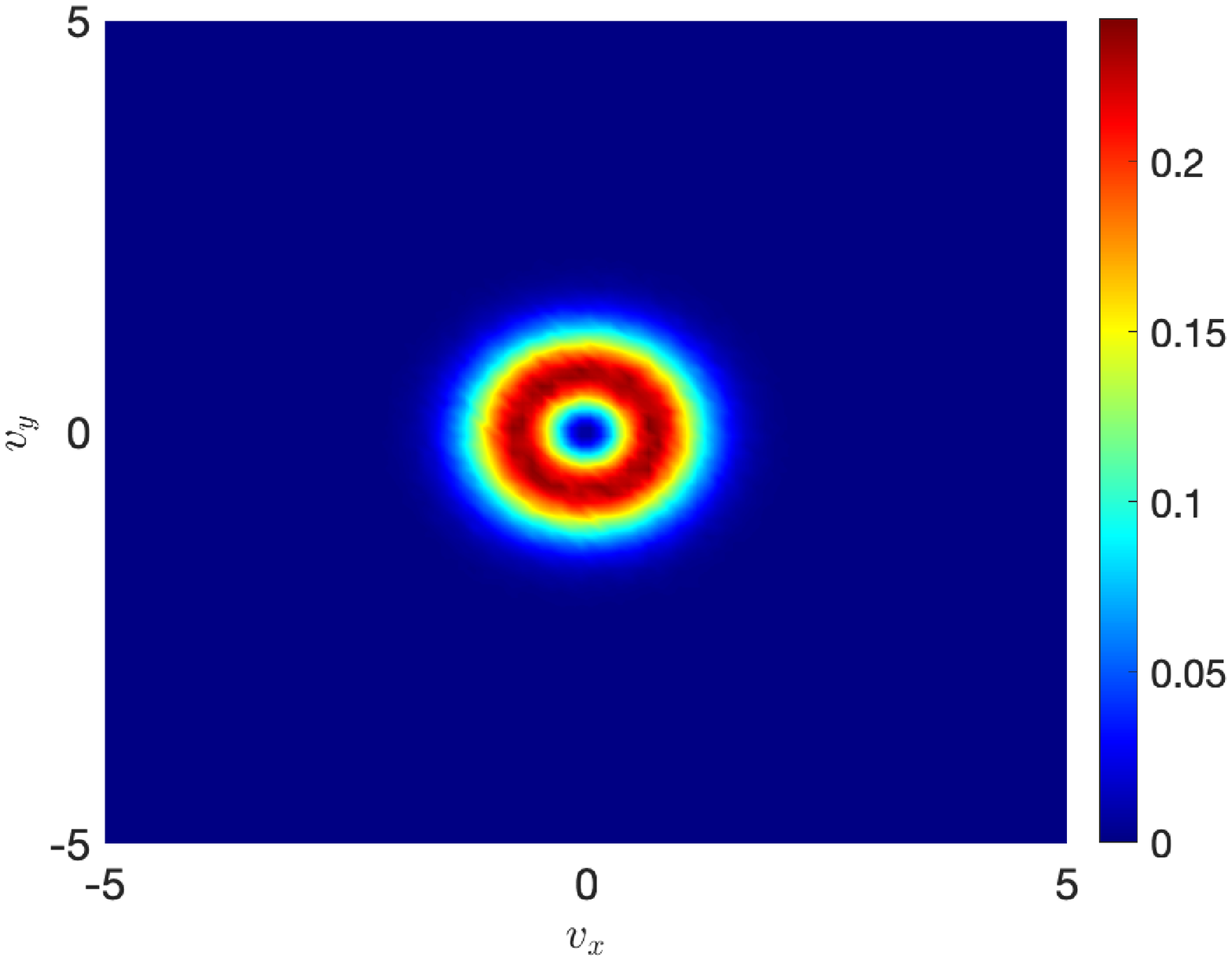}} \\
\subfigure[$\mathbb E(f)$ Exact, $t=5$]{
\includegraphics[scale = 0.38]{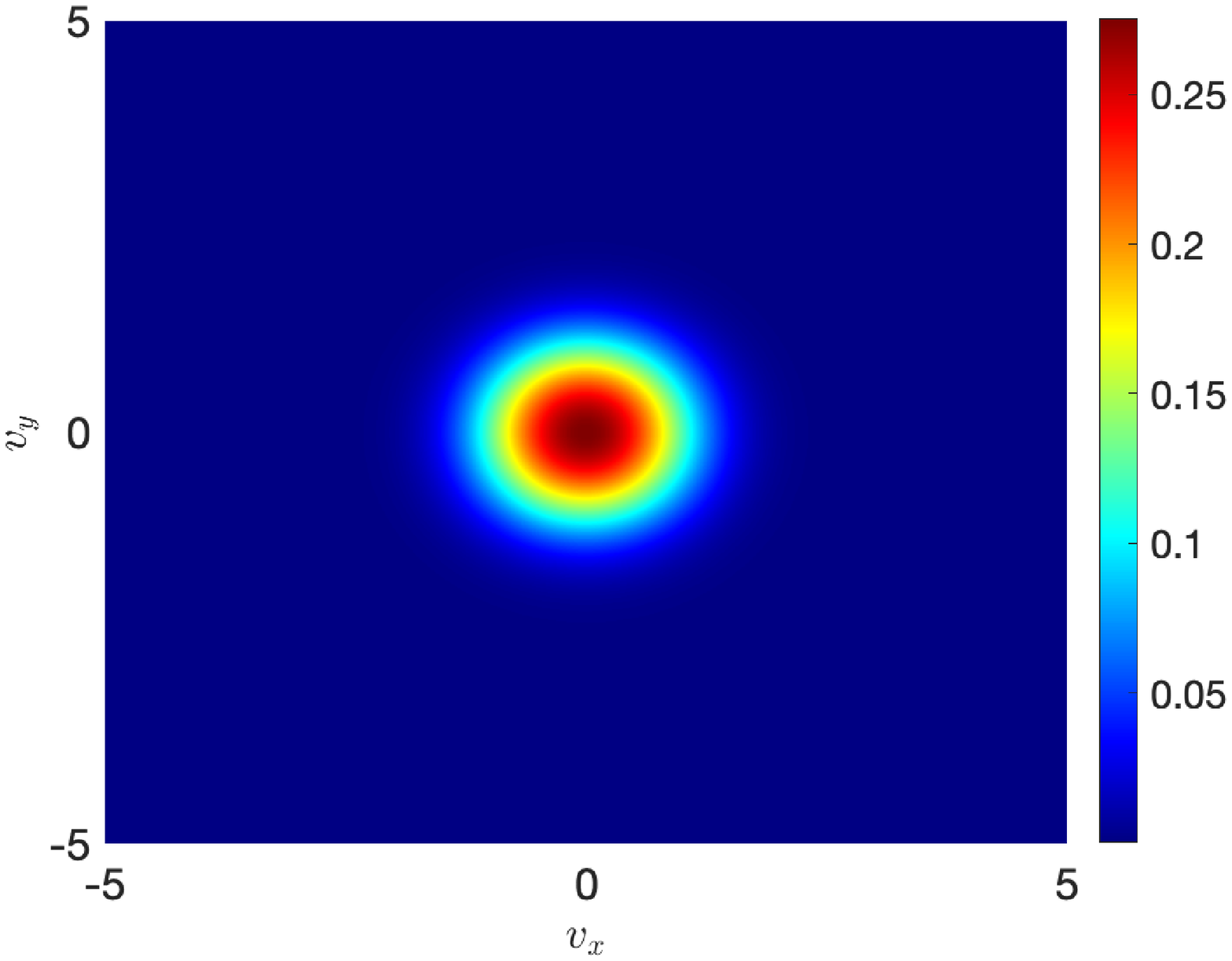}}
\subfigure[$\mathbb E(f)$ DSMC-sG, $t=5$]{
\includegraphics[scale = 0.38]{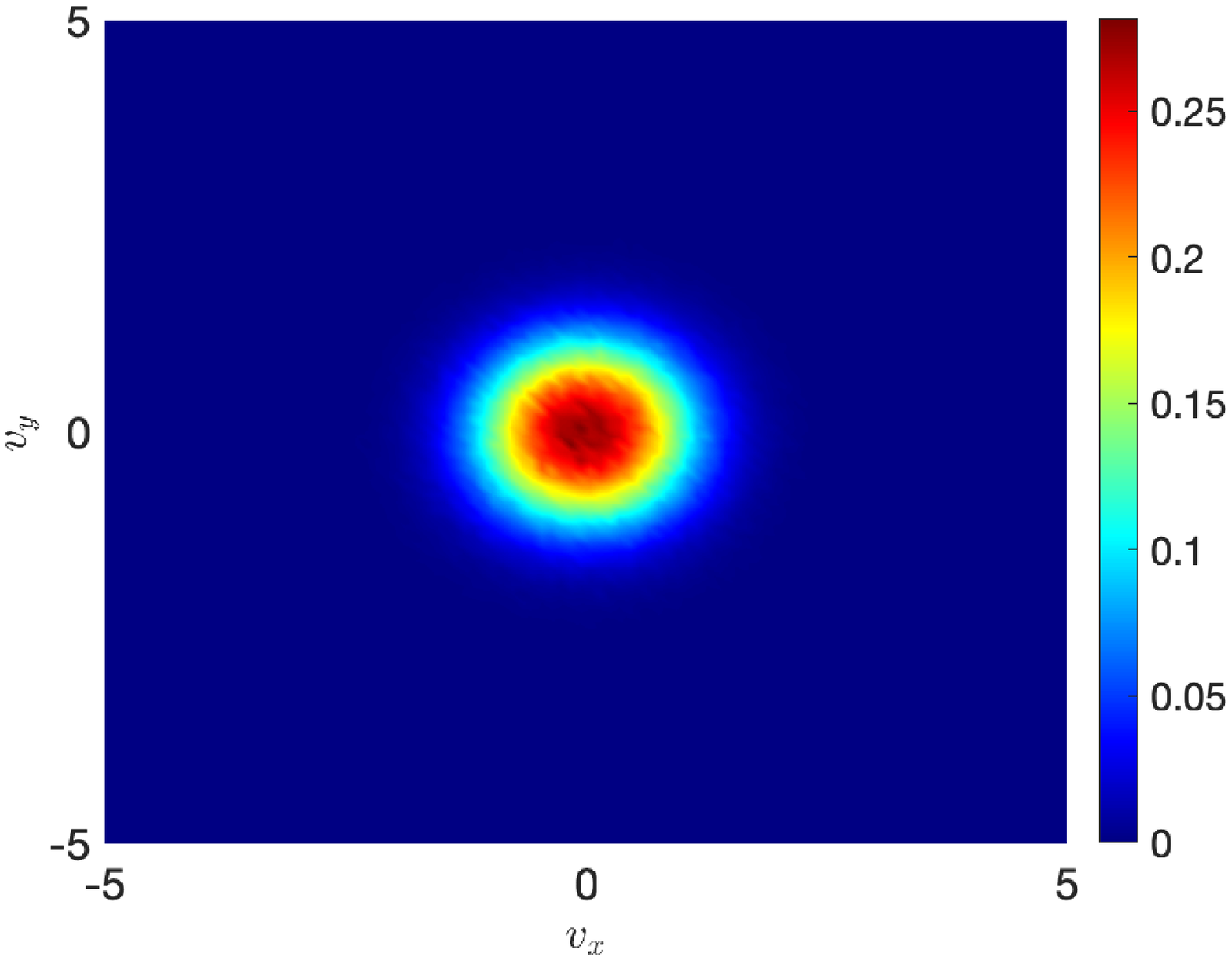}}
\caption{\textbf{Test 2}. Expected distribution of the 2D Boltzmann model for Maxwell molecules with uncertain temperature $1/\alpha(z)$ and $\alpha(z) = 2+\kappa z$, $z\sim \mathcal U([-1,1])$ and $\kappa = 0.25$. Left figure: expectation taken from the exact solution of the problem \eqref{eq:exact_max}. Right figure: reconstructed expectation through DSMC-sG method. We considered a set of $N = 10^6$ particles with $M = 5$ gPC expansion and $\Delta t = 10^{-1}$. }
\label{fig:max1}
\end{figure}

\begin{figure}
\centering
\subfigure[Var$(f)$ Exact, $t=0$]{
\includegraphics[scale = 0.38]{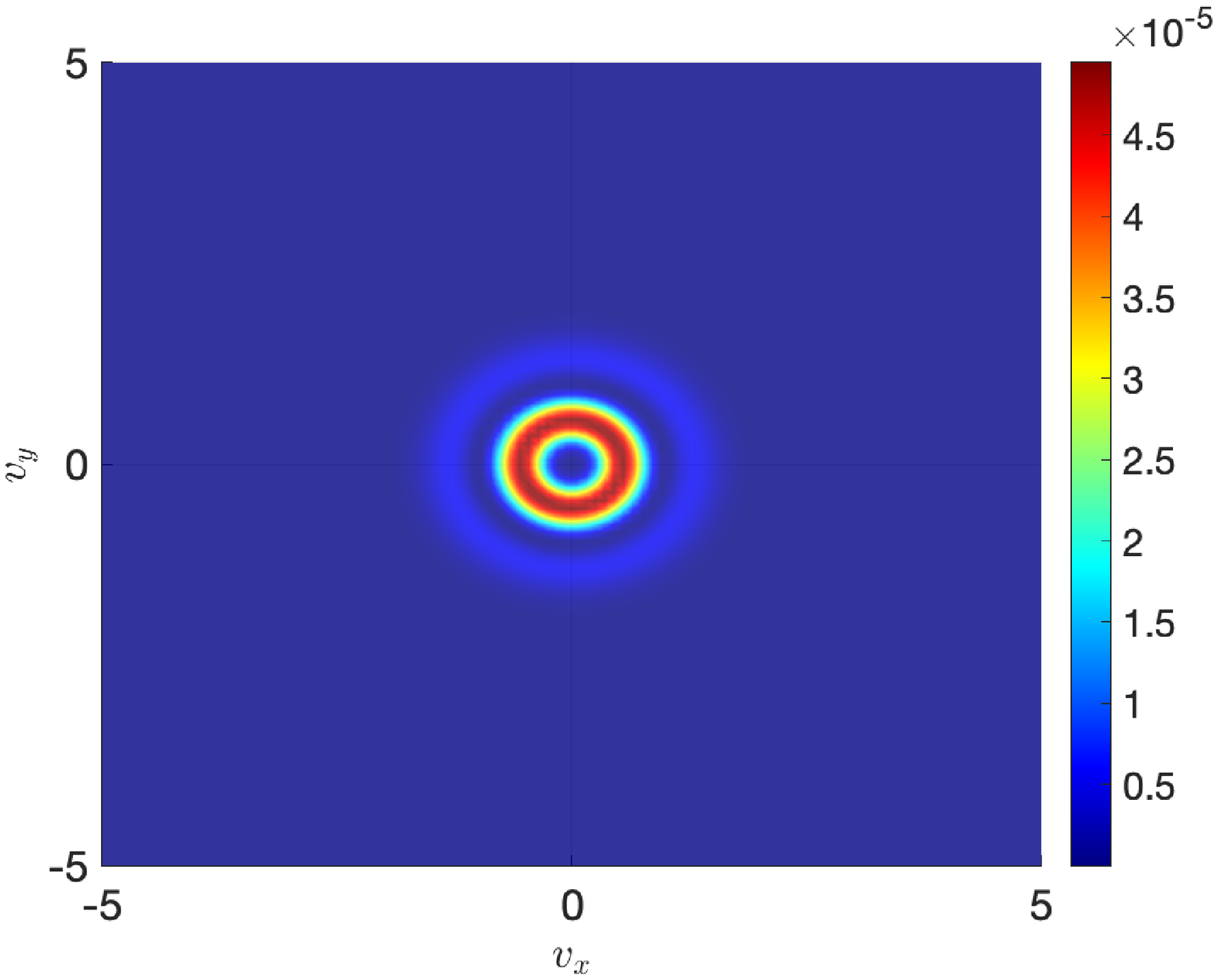}}
\subfigure[Var$(f)$ DSMC-sG, $t=0$]{
\includegraphics[scale = 0.38]{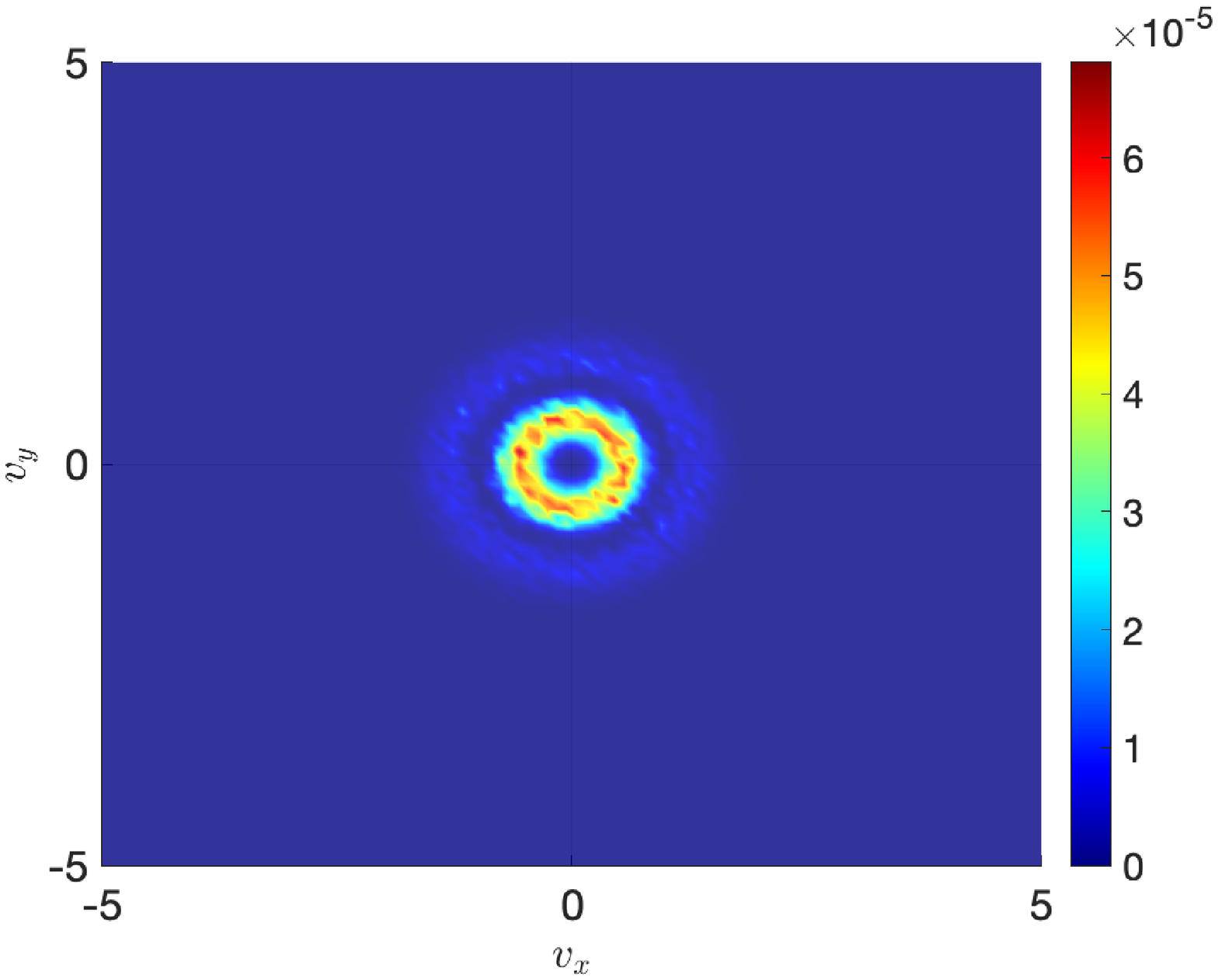}} \\
\subfigure[Var$(f)$ Exact, $t=5$]{
\includegraphics[scale = 0.38]{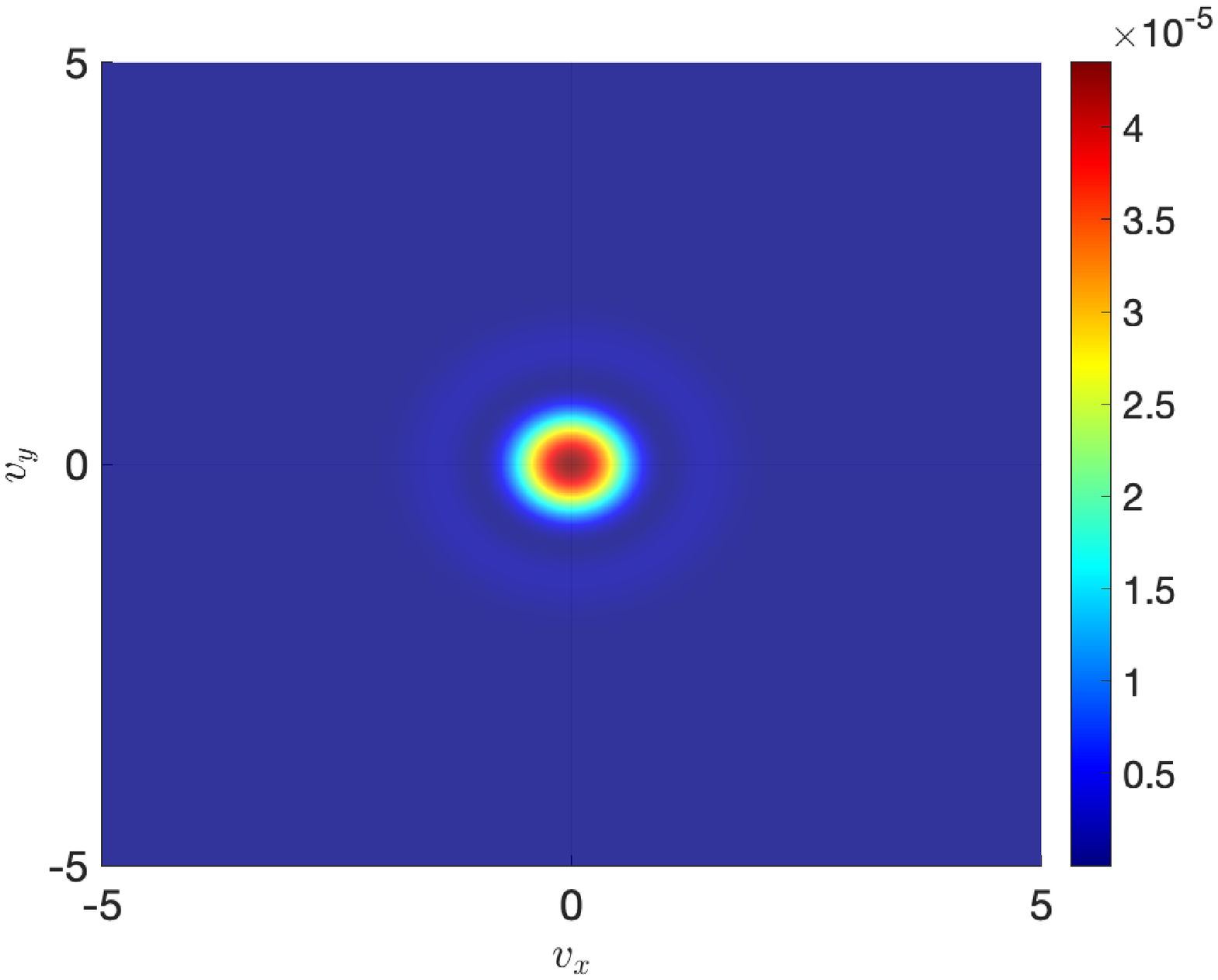}}
\subfigure[Var$(f)$ DSMC-sG, $t=5$]{
\includegraphics[scale = 0.38]{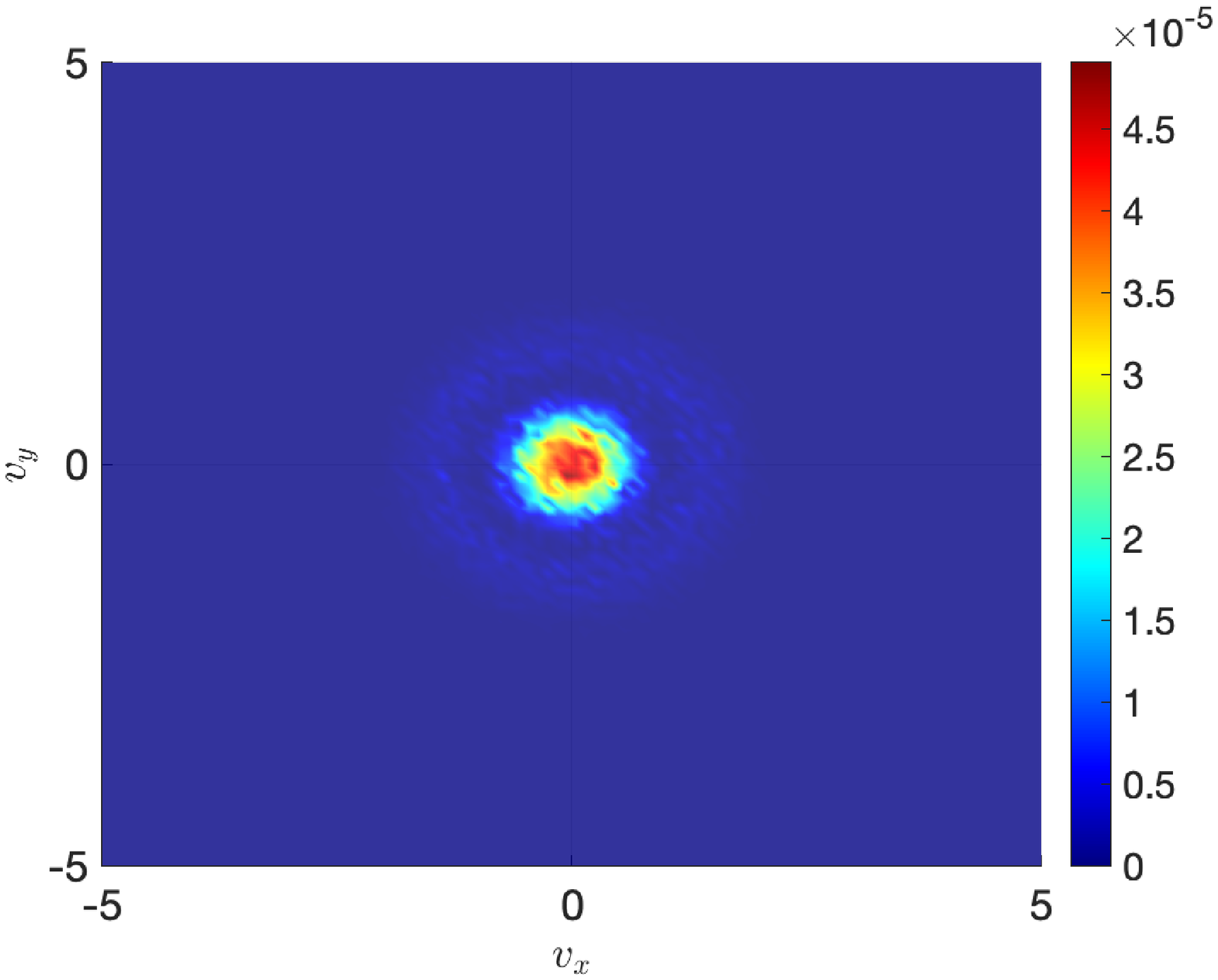}}
\caption{\textbf{Test 2}: Variance of the solution of the 2D Boltzmann model for Maxwell molecules with uncertain temperature $1/\alpha(z)$ and $\alpha(z) = 2+\kappa z$, $z\sim \mathcal U([-1,1])$ and $\kappa = 0.25$. Left figure: variance of the exact solution of the problem \eqref{eq:exact_max}. Right figure: reconstructed variance through DSMC-sG method. The rest of the parameters are the same as in Figure \ref{fig:max1}.}
\label{fig:max2}
\end{figure}

Furthermore, to emphasize the good agreement of the computed approximation for all times, we depict in Figure \ref{fig:max3} the evolution at times $t = 0,1,5$ of the marginal of $\mathbb E[f]$ and $\textrm{Var}(f)$. 

Finally, in Figure \ref{fig:max4} we present spectral convergence of the scheme computed through the fourth order moment of the 2D model with $\alpha(z) = 2+\kappa z$, $\kappa = 0.25$ and $\kappa = 0.7 5$ with $z\sim \mathcal U([-1,1])$. As reference solution we considered $M4$ at time $T = 5$ obtained with $N = 10^6$ particles and $M = 25$ Galerkin projections and the evolution is computed with $\Delta t = 10^{-1}$. In the right plot we present the decay of the $L^2(\Omega)$ error for increasing $M = 0,\dots,14$ in semilogarithmic scale. In the left plot we represent also the whole evolution of $M4$ computed through exact solution and through its DSMC-sG approximation. Similarly to the Kac model, we obtain numerical evidence of spectral convergence. 

\begin{figure}
\centering
\subfigure[$t = 0$]{
\includegraphics[scale = 0.25]{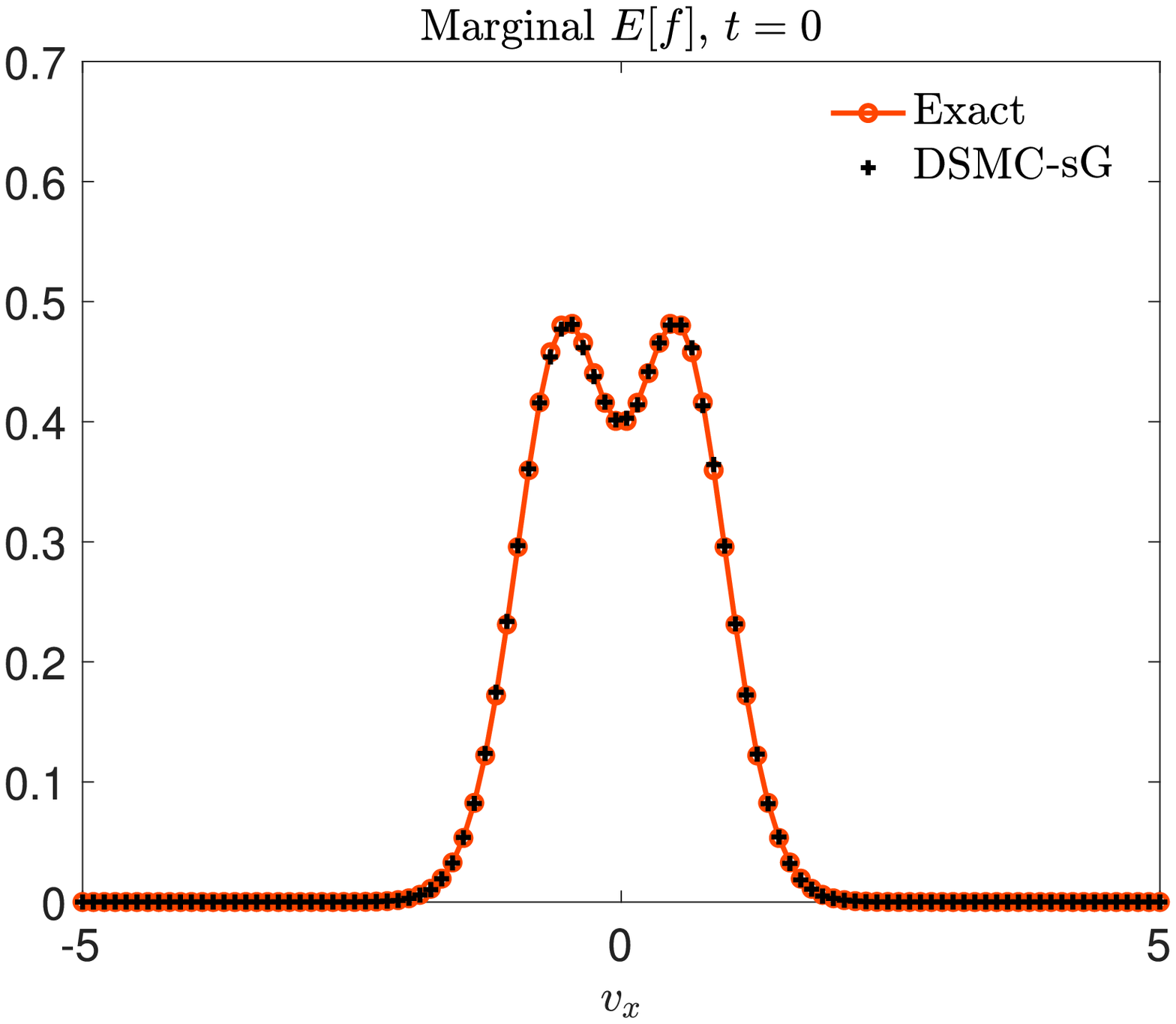}}
\subfigure[$t = 1$]{
\includegraphics[scale = 0.25]{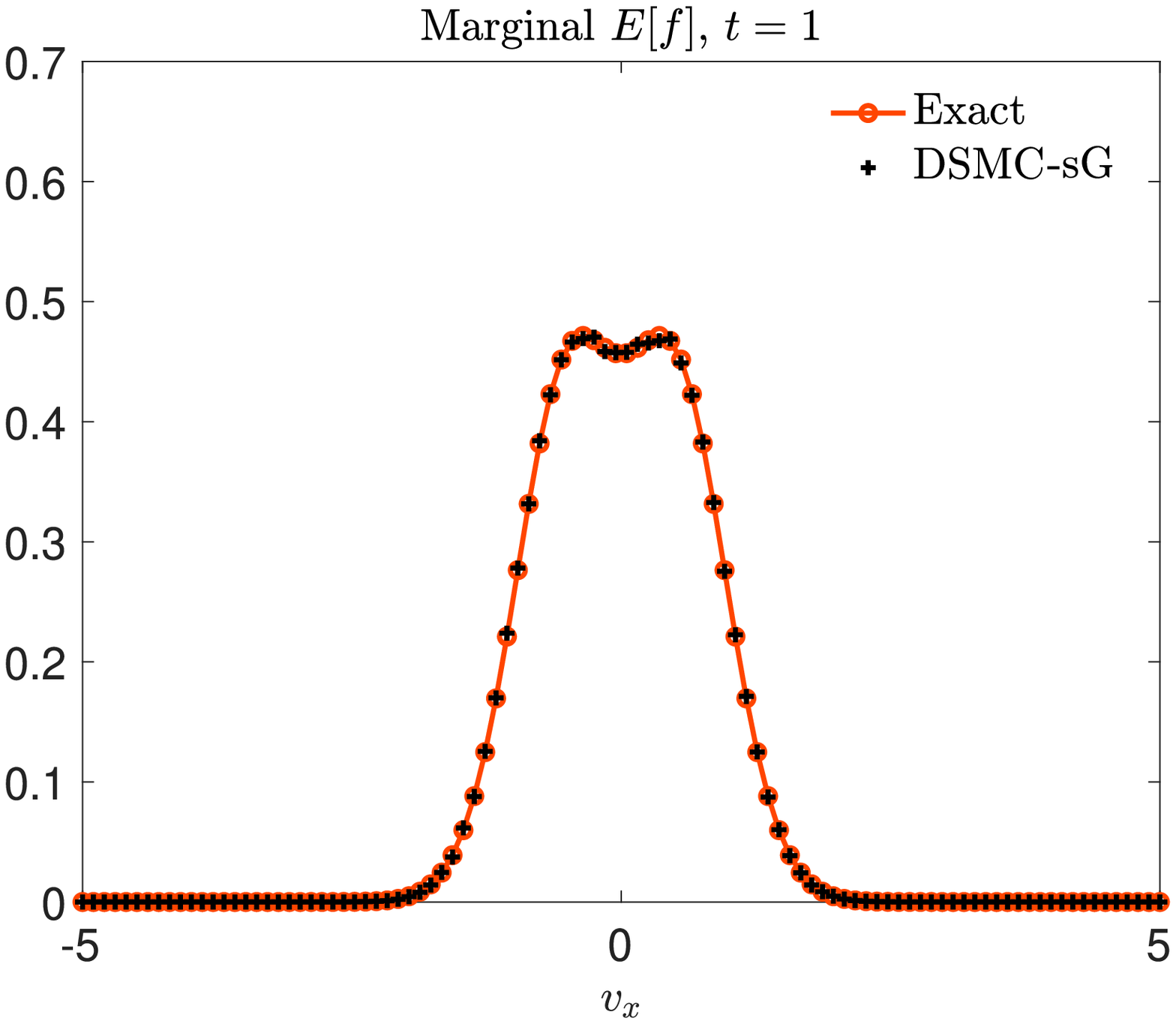}}
\subfigure[$t = 5$]{
\includegraphics[scale = 0.25]{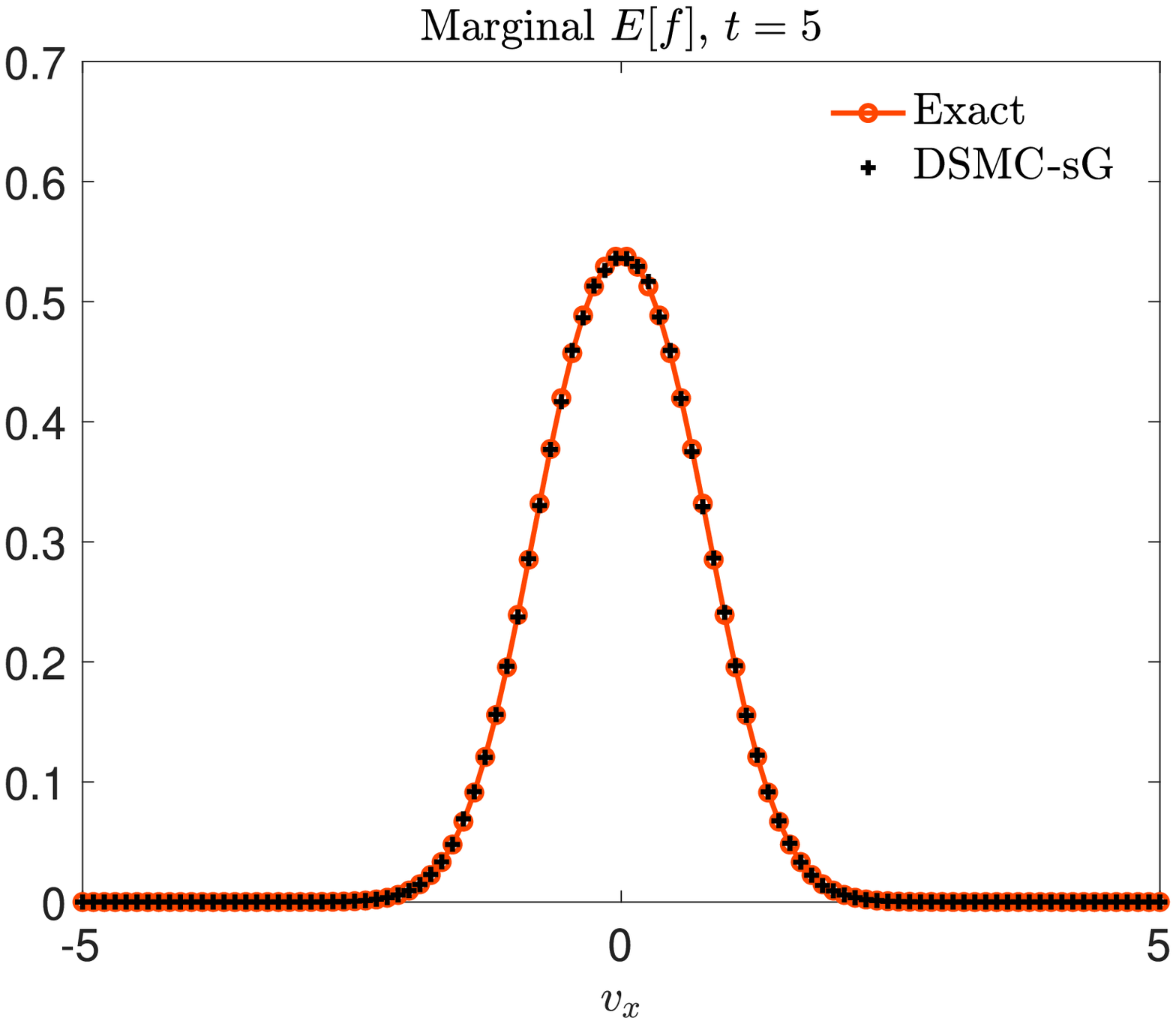}} \\
\subfigure[$t = 0$]{
\includegraphics[scale = 0.25]{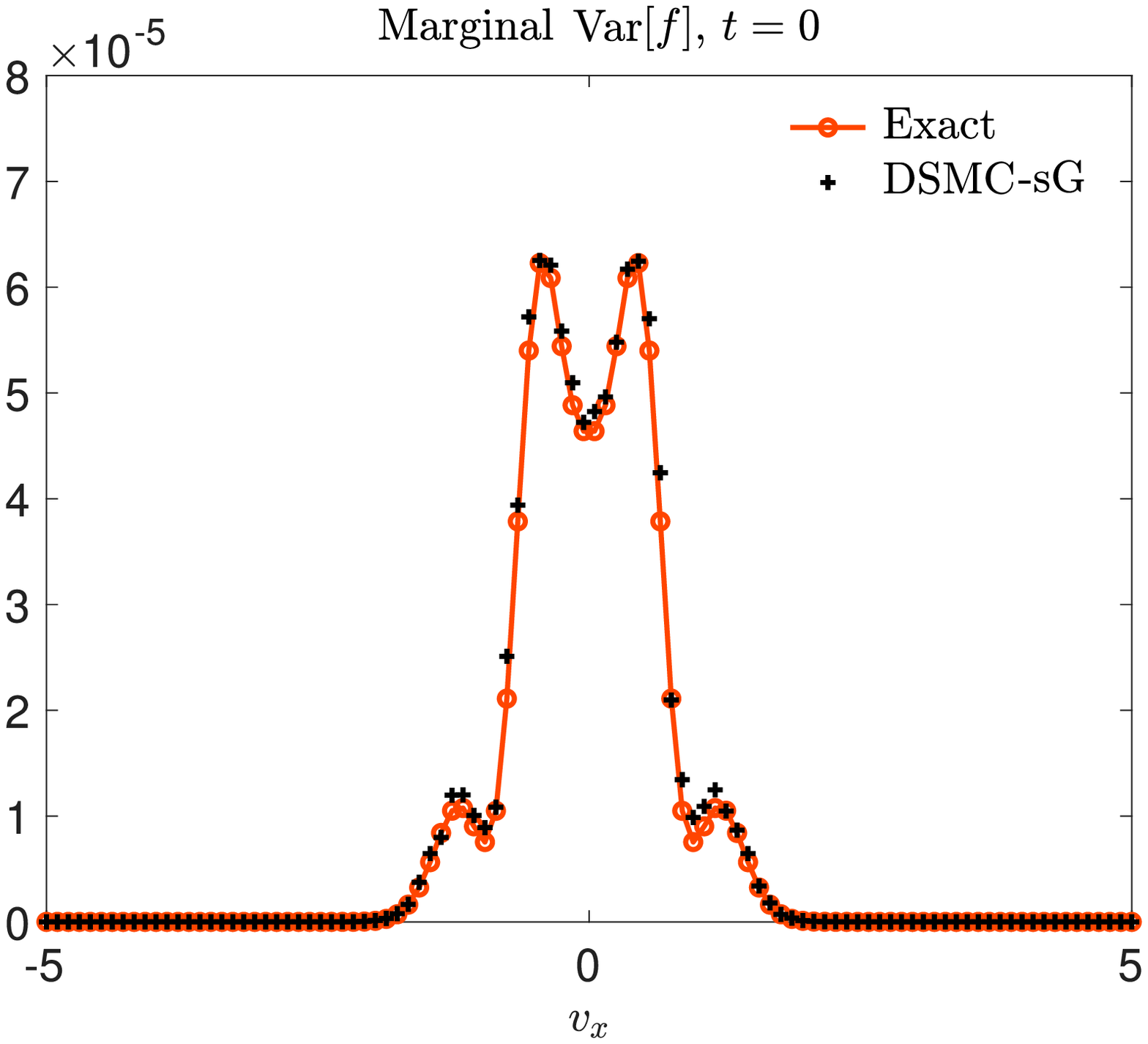}}
\subfigure[$t = 1$]{
\includegraphics[scale = 0.25]{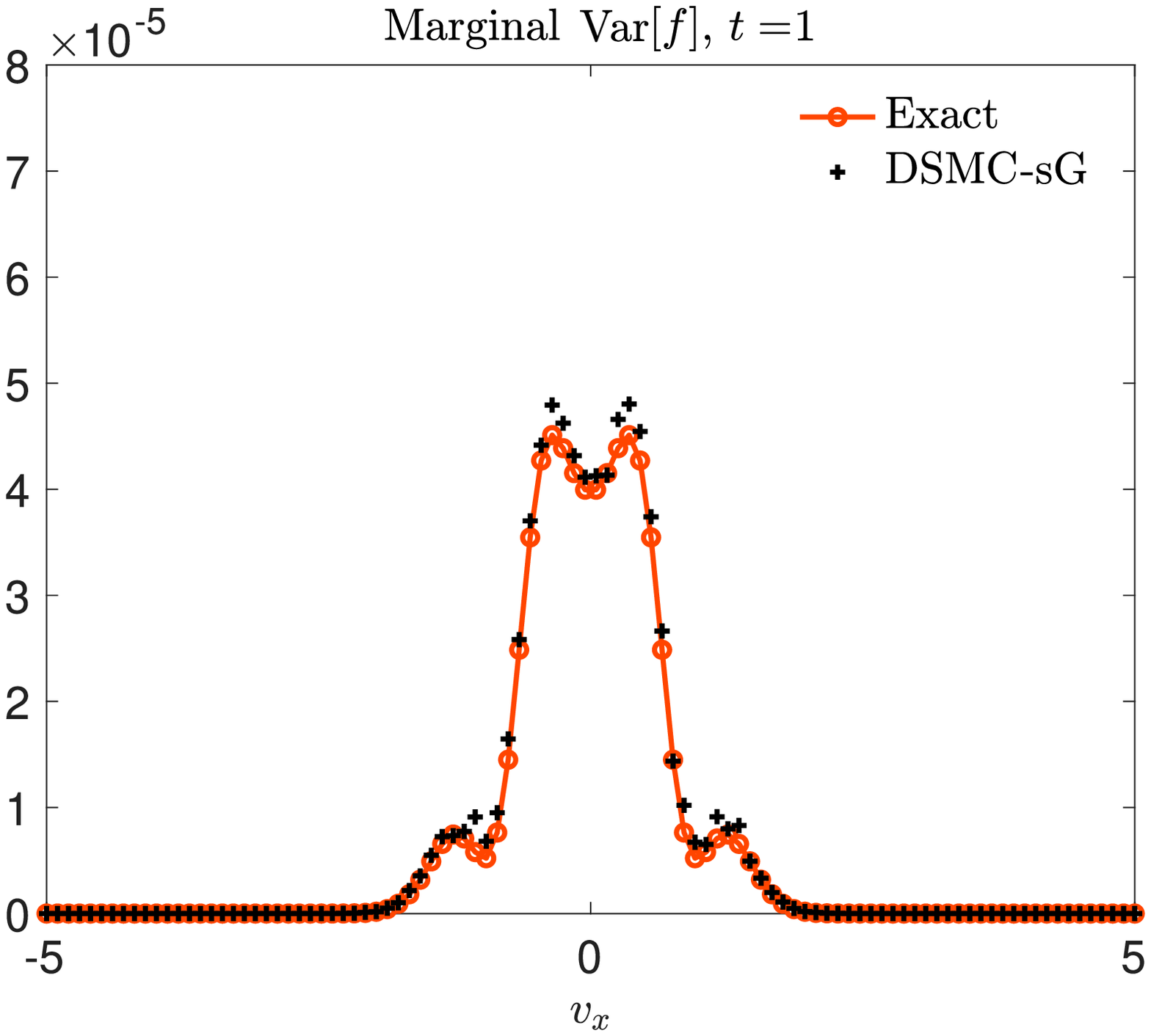}}
\subfigure[$t = 5$]{
\includegraphics[scale = 0.25]{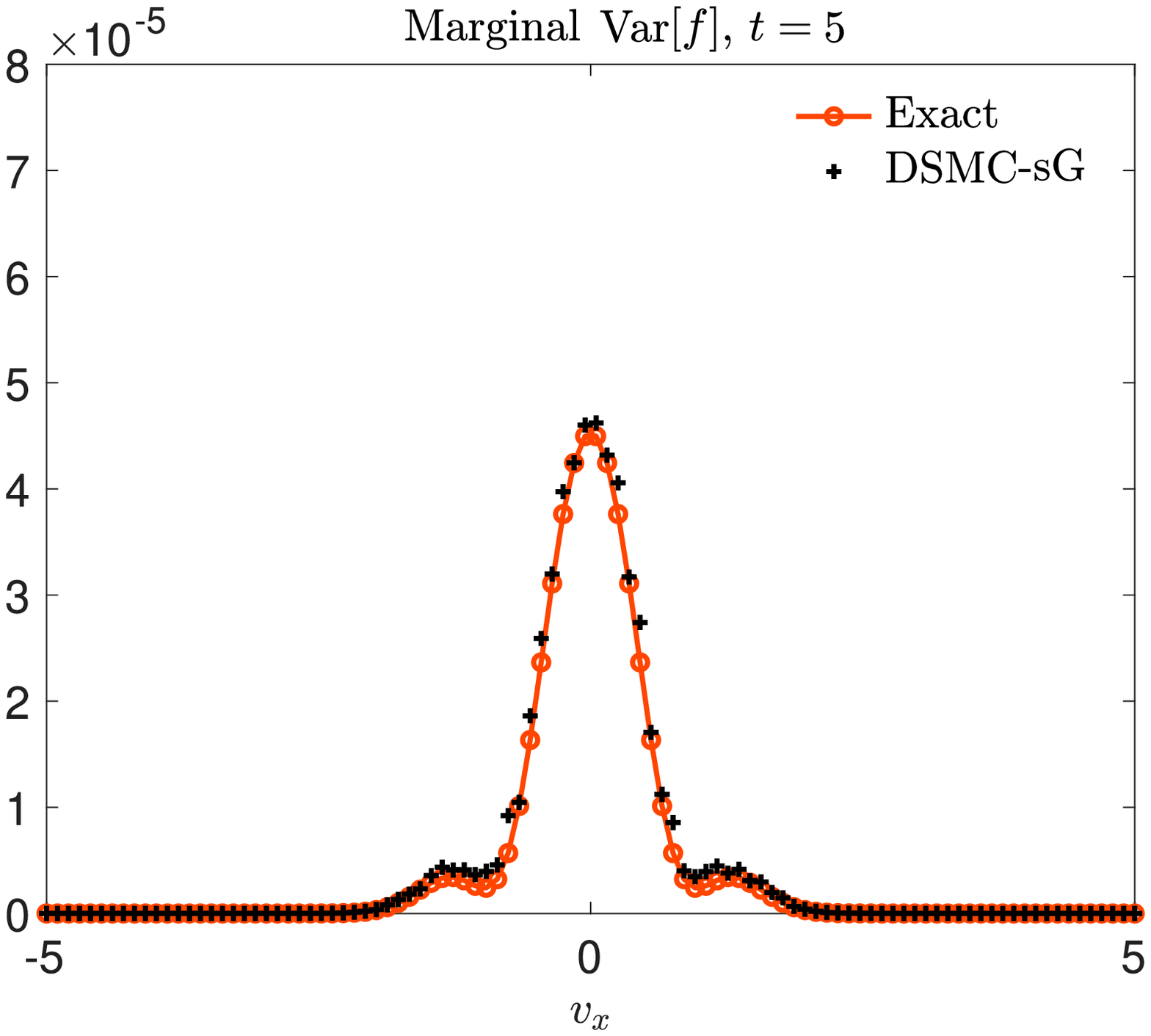}}
\caption{\textbf{Test 2}.  Evolution at times $t = 0,1,5$ of the marginal $\mathbb E[f]$ and $\textrm{Var}(f)$ from exact solution \eqref{eq:exact_max} and DSMC-sG approximation of the 2D Boltzmann model for Maxwell molecules with uncertain temperature. We considered $N = 10^6$ particles with $M= 5$ Galerkin projections and $\Delta t = 10^{-1}$. The reconstruction step has been performed in $[-5,5]^2$ through $100^2$ gridpoints. }
\label{fig:max3}
\end{figure}

\begin{figure}
\centering
\includegraphics[scale = 0.4]{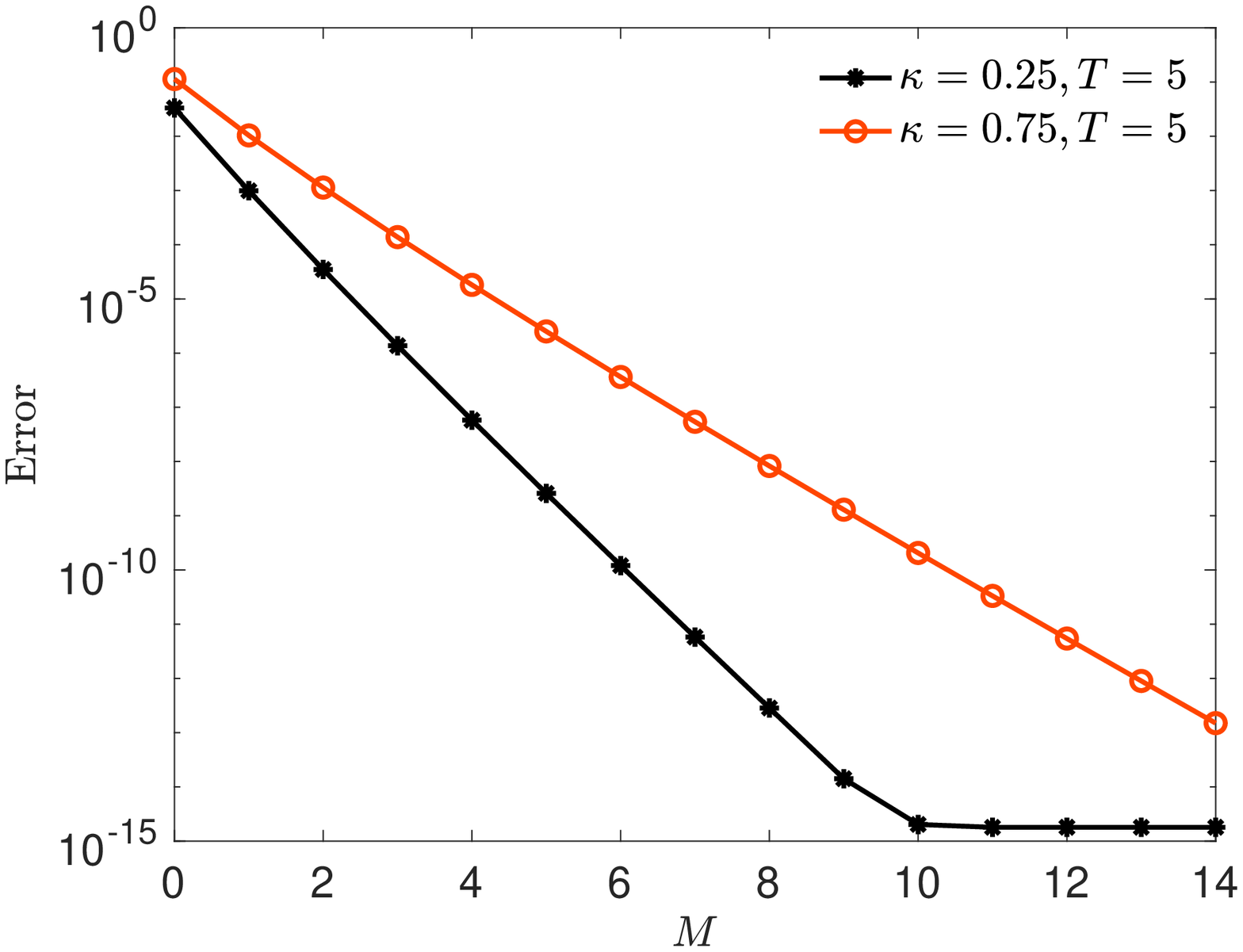}
\includegraphics[scale = 0.4]{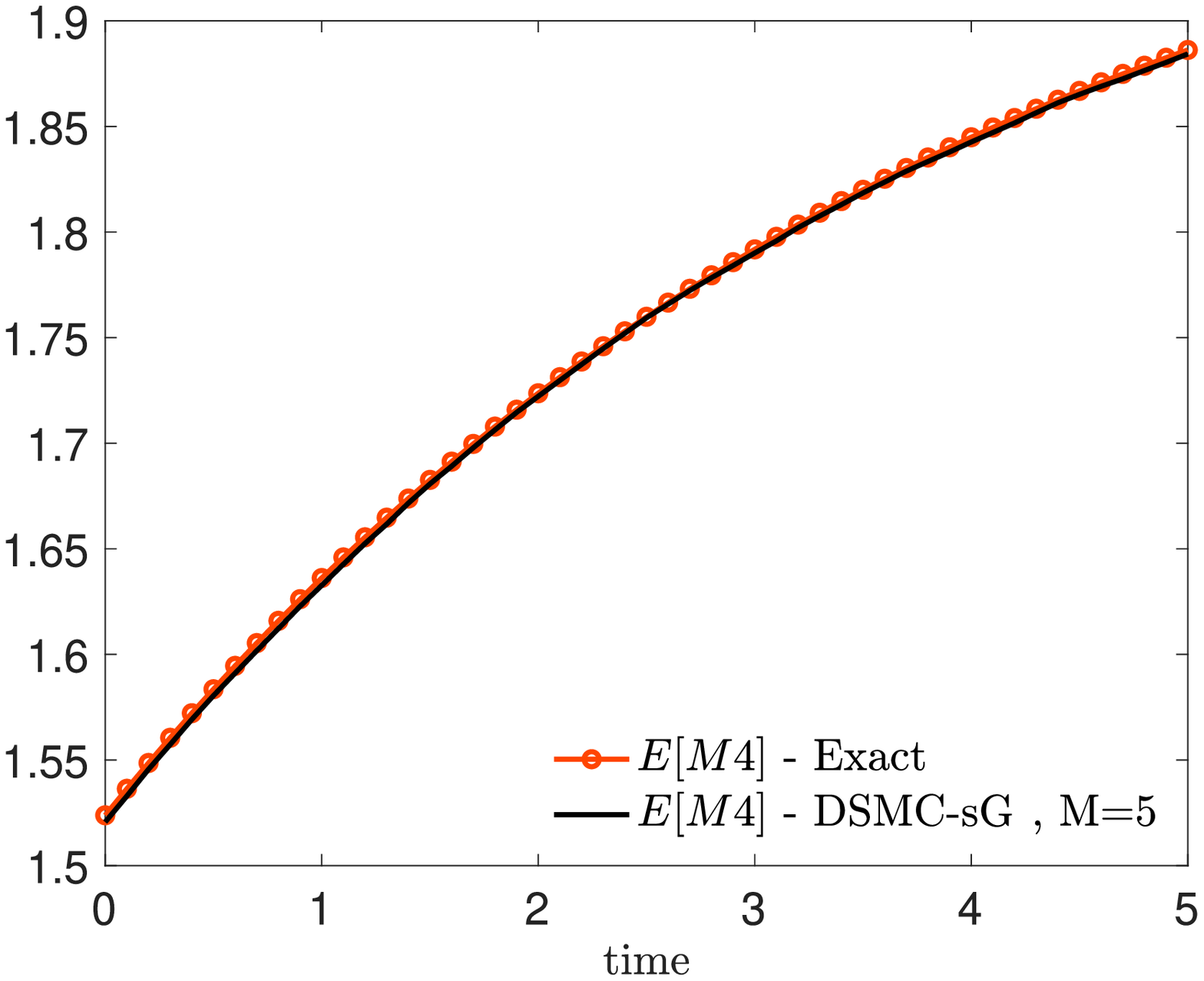}
\caption{\textbf{Test 2}. Left: Convergence of the $L^2(\Omega)$ error with respect to the fourth order moment obtained from a reference solution computed with $N = 10^6$ particles and $M = 25$ from the DSMC-sG methods. Right: evolution of the fourth order moment in the interval $[0,5]$ for exact and DSMC-sG approximation with $N = 10^6$ and $M = 5$.  }
\label{fig:max4}
\end{figure}

\subsection{Test 3: VHS molecules }\label{sect:test3}

We consider a 2D Boltzmann model with non-Maxwellian kernel of the form 
\[
B(z,|v(z)-v_*(z)|) = b_{\gamma}(\omega) |v(z)-v_*(z)|^\gamma,
\]
where $\gamma=(k-5)/(k-1)$. In particular we concentrate on the VHS model \cite{Bi76} where $b_\gamma(\theta) = C_\gamma$ and $C_\gamma$ is a positive constant. The case $\gamma=0$ refers to the model for Maxwellian molecules whereas $\gamma = 1$ describes a hard sphere gas. Let us consider an uncertain initial distribution function which is a sum of two Gaussian distributions with uncertain variance
\begin{equation}
\label{eq:f0_VHS}
f(z,v,0) = \dfrac{1}{2\pi\sigma^2(z)} \left[ e^{\frac{-|v-2\sigma(z) e_1|^2}{2\sigma^2(z)}} + e^{\frac{-|v+2\sigma(z) e_1|^2}{2\sigma^2(z)}}  \right],
\end{equation}
where $e_1 = (1,0)$. This distribution can be employed to check the evolution of the components of the stress tensor defined as
\begin{equation}\label{eq:Pij}
P_{ij}(z,t) = \int_{\mathbb R^2} (v_i-u_i)(v_j-u_j)f(z,v,t)dv,\qquad i,j = 1,2,
\end{equation}
where $u_i$ are the components of the mean velocity. For Maxwellian molecules, i.e. $\alpha = 0$, we can derive exact evolution of the components of the stress tensor
\begin{equation}
\label{eq:P_stress}
P_{11}(z,t) = T(z) + \dfrac{1}{2}w(t), \qquad P_{22}(z,t) = T(z) - \dfrac{1}{2}w(t),
\end{equation}
being 
\begin{equation}
T(z) = \sigma^2(z), 
\label{eq:TE2D2}
\end{equation} 
the temperature and $w(t) = w_0 e^{-t/2}$ and $w_0 = 4\pi$, see \cite{BR,Par1}. In the following tests we will consider 
\begin{equation}
\label{eq:sigma_VHS}
\sigma(z) = \dfrac{\lambda\pi}{6} \left( 1 + \kappa z \right), \qquad z\sim \mathcal U([-1,1]), 
\end{equation}
with $\lambda = \frac{2}{3+\sqrt{2}}$. In Figure \ref{fig:test3_1} we compute the evolution of the components of the expected stress tensor in the case $\gamma = 0$ and for the values $\kappa = 0.1$, and $\kappa = 0.5$. The evolution obtained with DSMC-sG method, with $N= 10^6$ particles and  Galerkin projections of order $M = 5$, is then compared with the exact one. The expected values are accurately approximated by the scheme. Furthermore, we computed  $\textrm{Var}(P_{11})(t)$ and $\textrm{Var}(P_{22})(t)$ to build the variability area highlighted in grey in Figure \ref{fig:test3_1} through the standard deviations from the expected stress tensor.

\begin{figure}
\subfigure[$\kappa = 0.1$]{
\includegraphics[scale = 0.4]{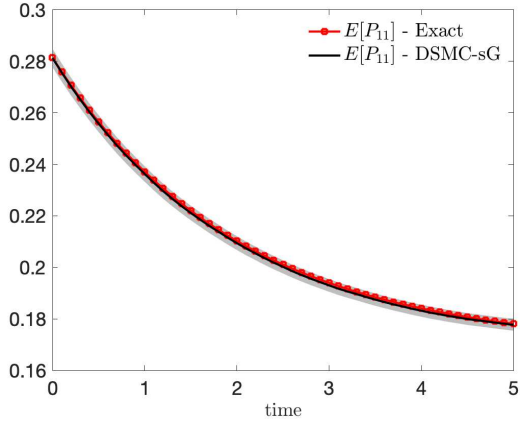}}
\subfigure[$\kappa = 0.1$]{
\includegraphics[scale = 0.4]{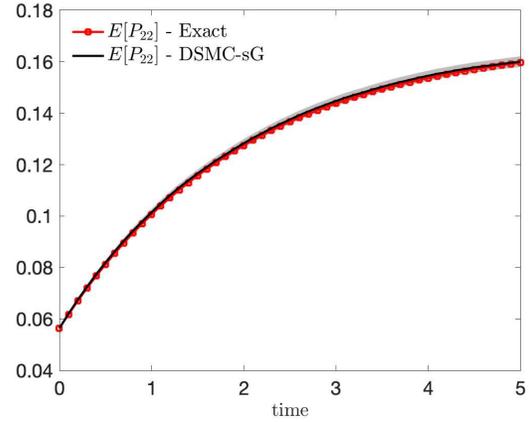}}\\
\subfigure[$\kappa = 0.5$]{
\includegraphics[scale = 0.4]{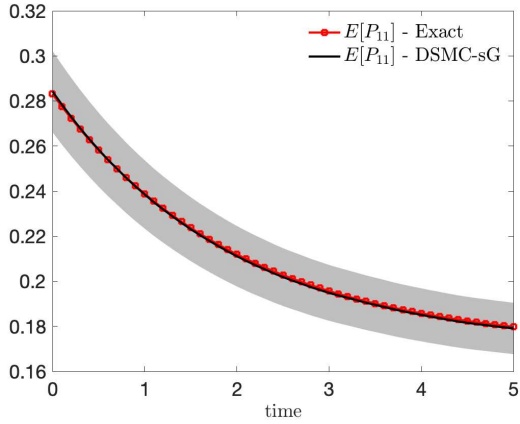}}
\subfigure[$\kappa = 0.5$]{
\includegraphics[scale = 0.4]{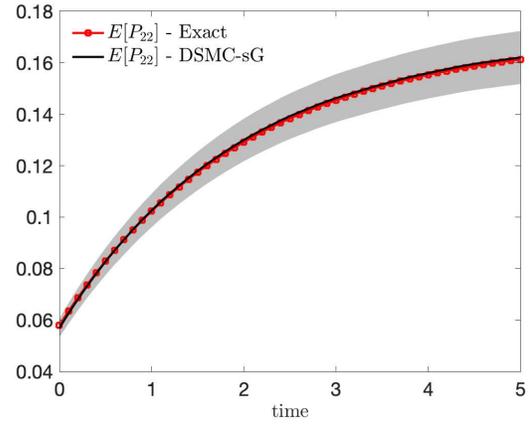}}
\caption{\textbf{Test 3}. Evolution of the expected $P_{11}(z,t)$ from initial density \eqref{eq:f0_VHS} in the case $\gamma = 0$ and $C_0 = 1/2\pi$ and two level of initial uncertainty $\kappa = 0.1$ and $\kappa = 0.5$ in \eqref{eq:sigma_VHS}. We compare the exact evolution \eqref{eq:P_stress} with the one obtained from DSMC-sG scheme for VHS molecules with $\gamma = 0$ with $N = 10^6$ particles with $M = 5$ Galerkin projections. In grey we highlighted the displacement obtained through standard deviations }
\label{fig:test3_1}
\end{figure}

The evolution of the expected stress tensor for $\gamma = 0,1,2$ is shown in Figure \ref{fig:test3_2}. In agreement with the deterministic case, see \cite{Par1} we may observe how the decay of $\mathbb E[P_{11}]$ is stronger for $\gamma >0$ than the Maxwellian case, corresponding to $\gamma = 0$. This behavior is emphasized in semilogarithmic scale in the left picture of Figure \ref{fig:test3_2}.  Each dynamics is obtained through DSMC-sG scheme with $N = 10^6$ particles and $M=5$ Galerkin projections of the binary collisions.  

\begin{figure}
\includegraphics[scale  = 0.40]{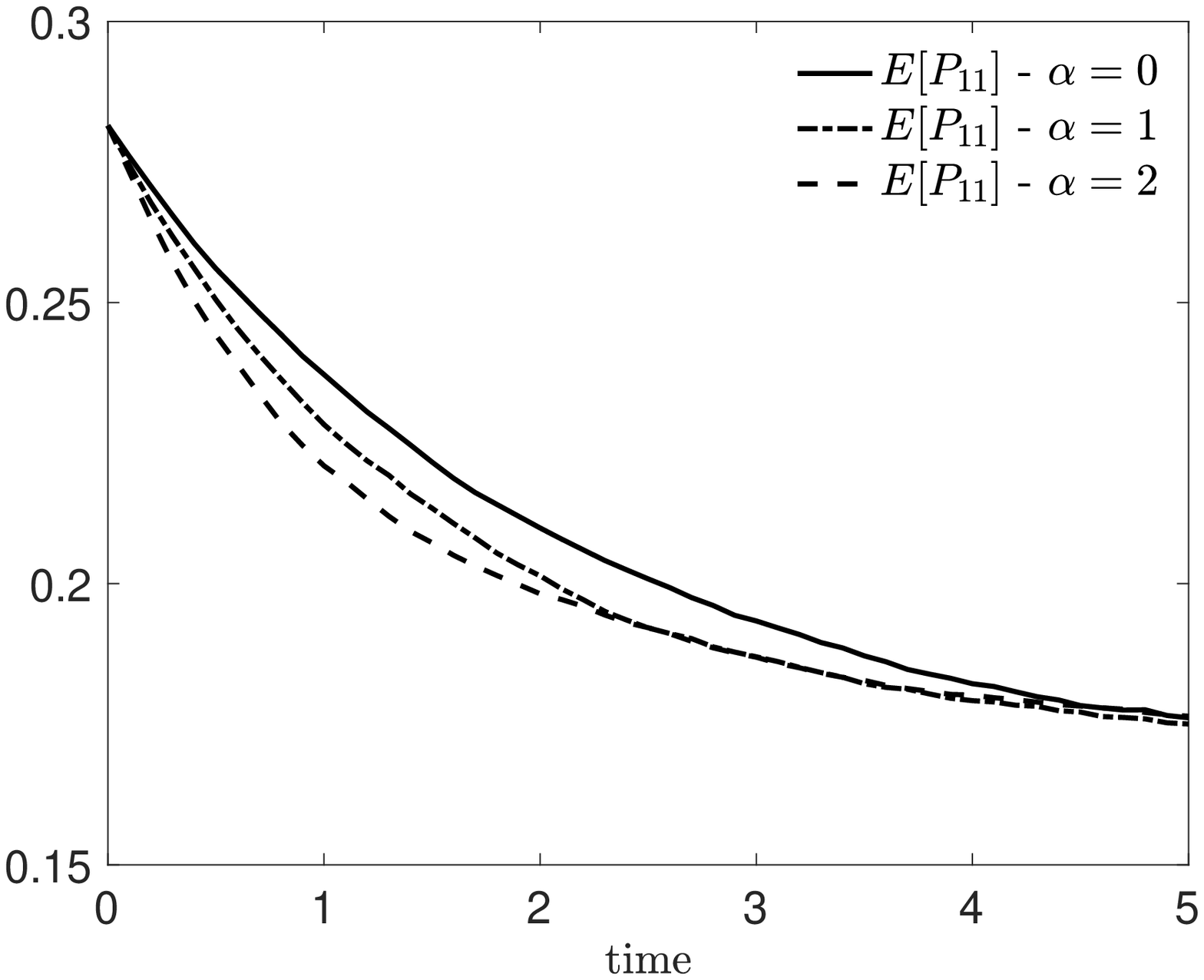}
\includegraphics[scale  = 0.40]{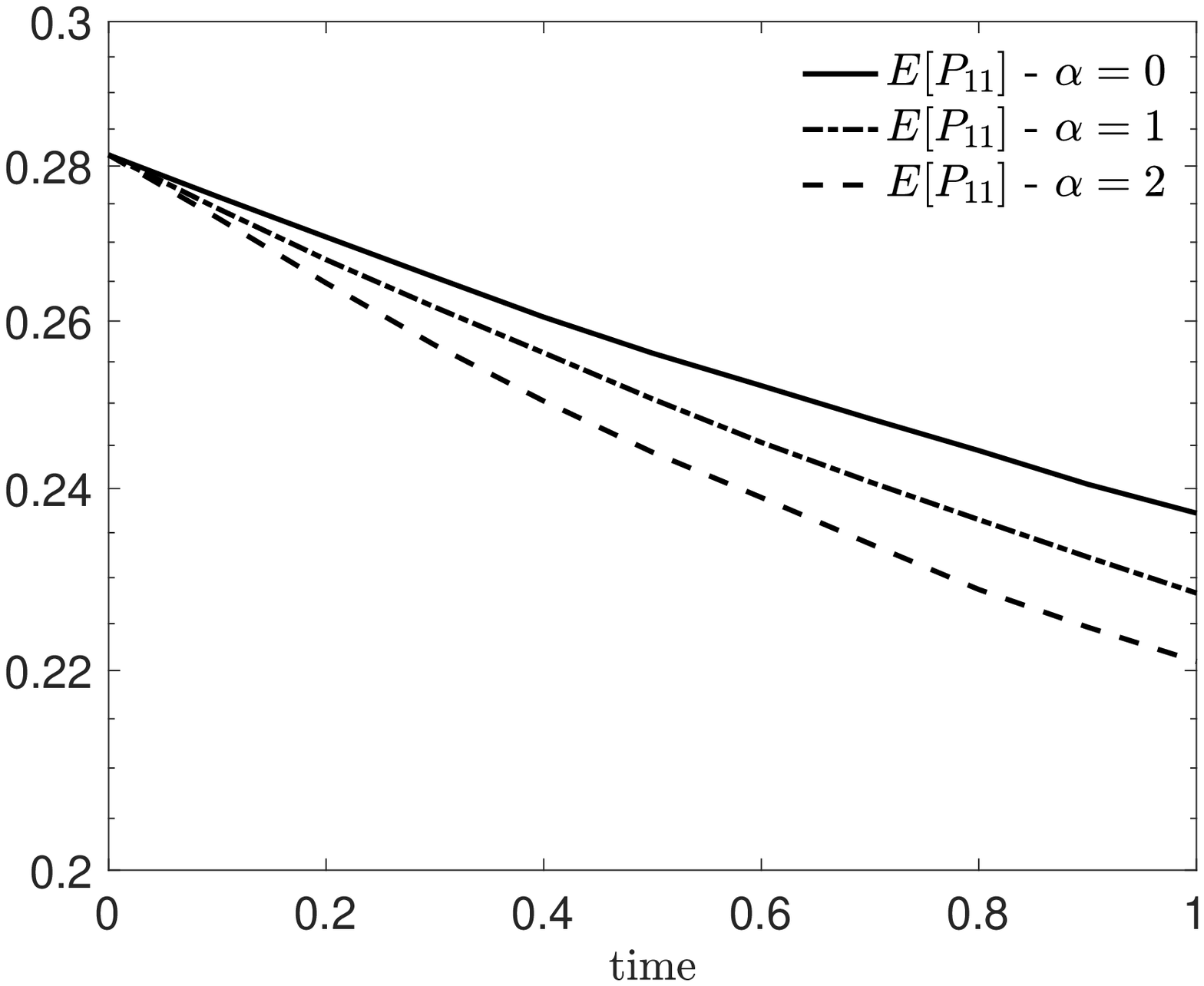}
\caption{\textbf{Test 3}.  Evolution of the expected stress tensor $\mathbb E[P_{11}]$ for $\gamma = 0,1,2$ obtained from DSMC-sG scheme with $N = 10^6$ and $M= 5$, $\Delta t = 10^{-1}$. In the right picture we present the evolution in the interval $[0,1]$ in semilogarithmic scale to highlight the different expected trends in hard sphere gases.   }
\label{fig:test3_2}
\end{figure}

In the reformulation \eqref{eq:binary_sig}-\eqref{eq:binary_terma} of the binary collision term for hard gases in Section \ref{sect:VHS}, in order to recover spectral accuracy, we proposed to replace the discontinuous function  present in the original dynamics \eqref{eq:binary_VHS} with a smooth sigmoid function $K(\beta(\cdot))$ coupled with a post-interaction thermalization process. The function needs to be an approximation of the indicator function $\Psi(\cdot)$ and has been introduced in order to preserve the smoothness of projected quantities required in a stochastic Galerkin approach. Let us consider the sigmoid function
\[
\begin{split}
K(\beta(x-y)) = \dfrac{ \textrm{tanh}(\beta(x-y))+1 }{2}, \qquad \beta >0.
\end{split}\] 
In Figure \ref{fig:test3_3} we compute the $L^2$ error of the stochastic Galerkin methods for increasing number of projections with respect to the approximated stress tensor $P_{11}^M$. A reference solution computed with $M = 50$ and $N = 10^6$ particles is considered from the initial distribution \eqref{eq:f0_VHS} with temperature \eqref{eq:TE2D2} and $\kappa = 0.1$. If we consider the original binary dynamics \eqref{eq:binary_VHS}, even if the expectation of $P_{11}(z,t)$ is well described, it can be seen that spectral accuracy of the method is lost due to the discontinuity of function $\Psi$. On the other hand, the same test performed for the modified binary dynamics \eqref{eq:binary_sig} without thermalization (see Figure \ref{fig:test3_3}, left) recovers spectral accuracy but at the price of a dissipative dynamics since the regularized interaction is no more conservative for the energy. For increasing $\beta\gg 0$, as expected, the convergence of the scheme deteriorates even if energy dissipation vanish and the expectation of $P_{11}(z,t)$ is well approximated. Coupling now, \eqref{eq:binary_sig} with the thermalization process \eqref{eq:binary_terma} (see Figure \ref{fig:test3_3}, right) we recover an accurate evolution of $P_{11}(z,t)$ together with the spectral convergence of the scheme for moderate values of $\beta>0$, which, as expected, deteriorates for $\beta\gg 0$. 

\begin{figure}
\centering
\includegraphics[scale  = 0.40]{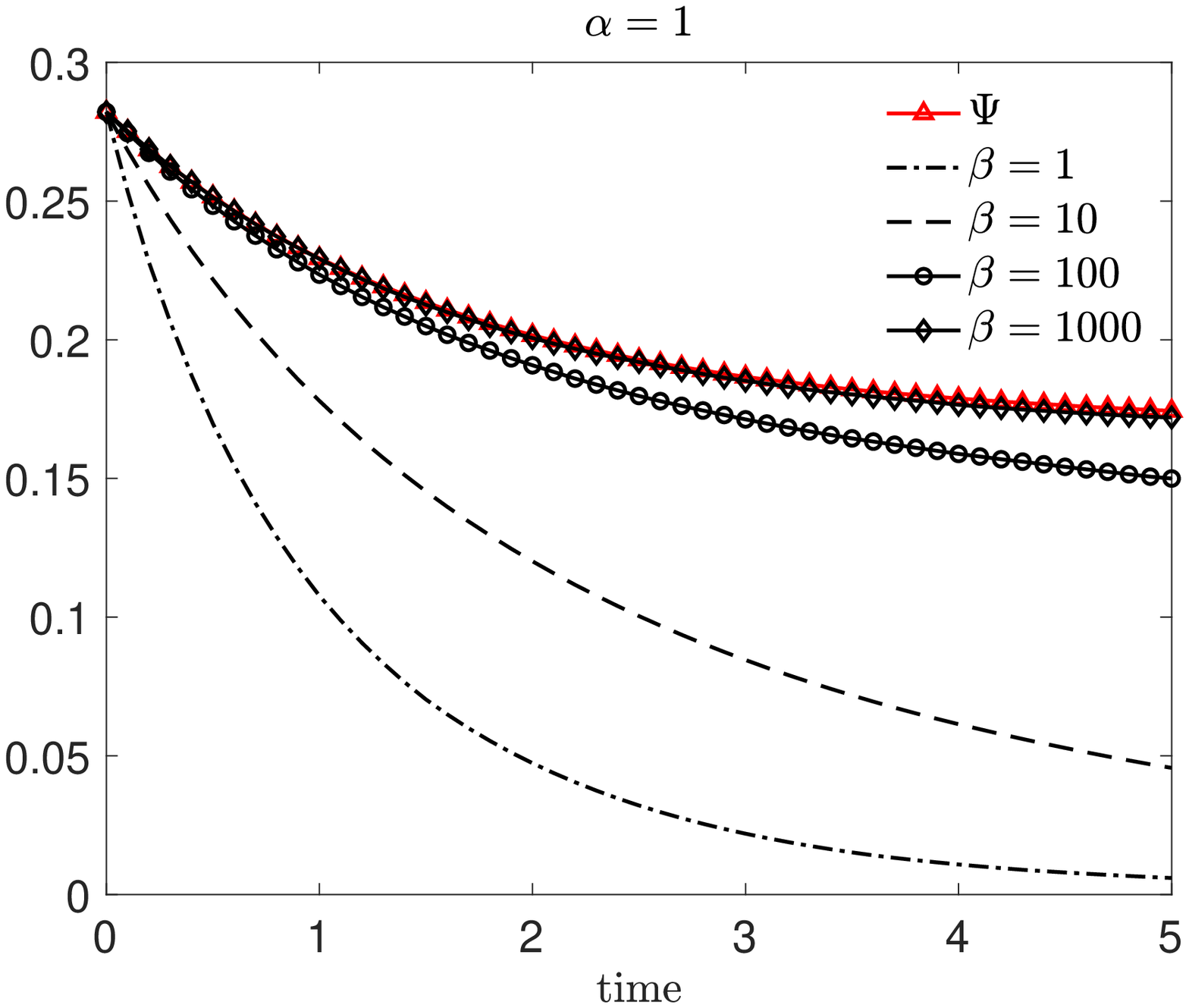}
\includegraphics[scale  = 0.40]{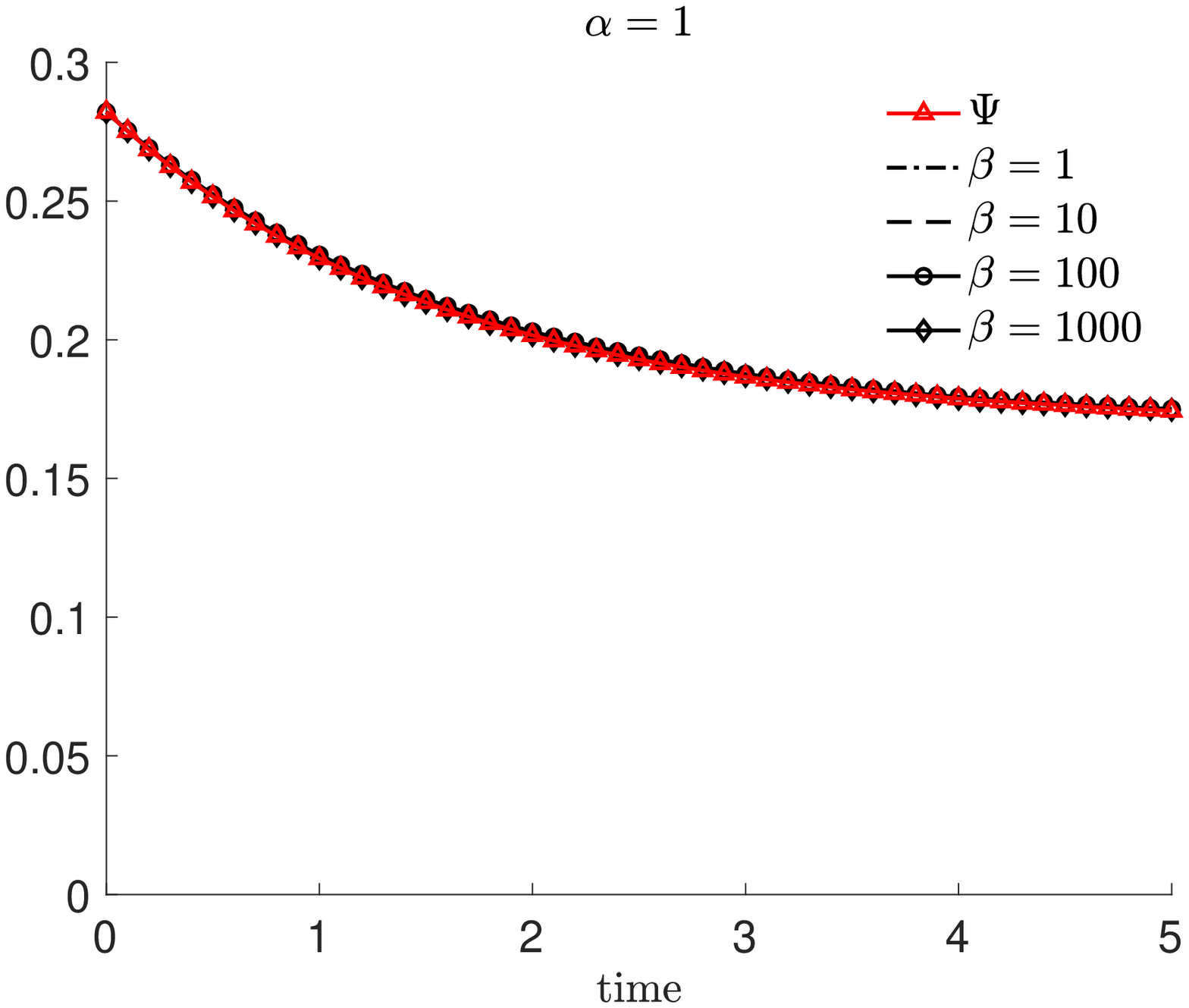} \\
\includegraphics[scale  = 0.40]{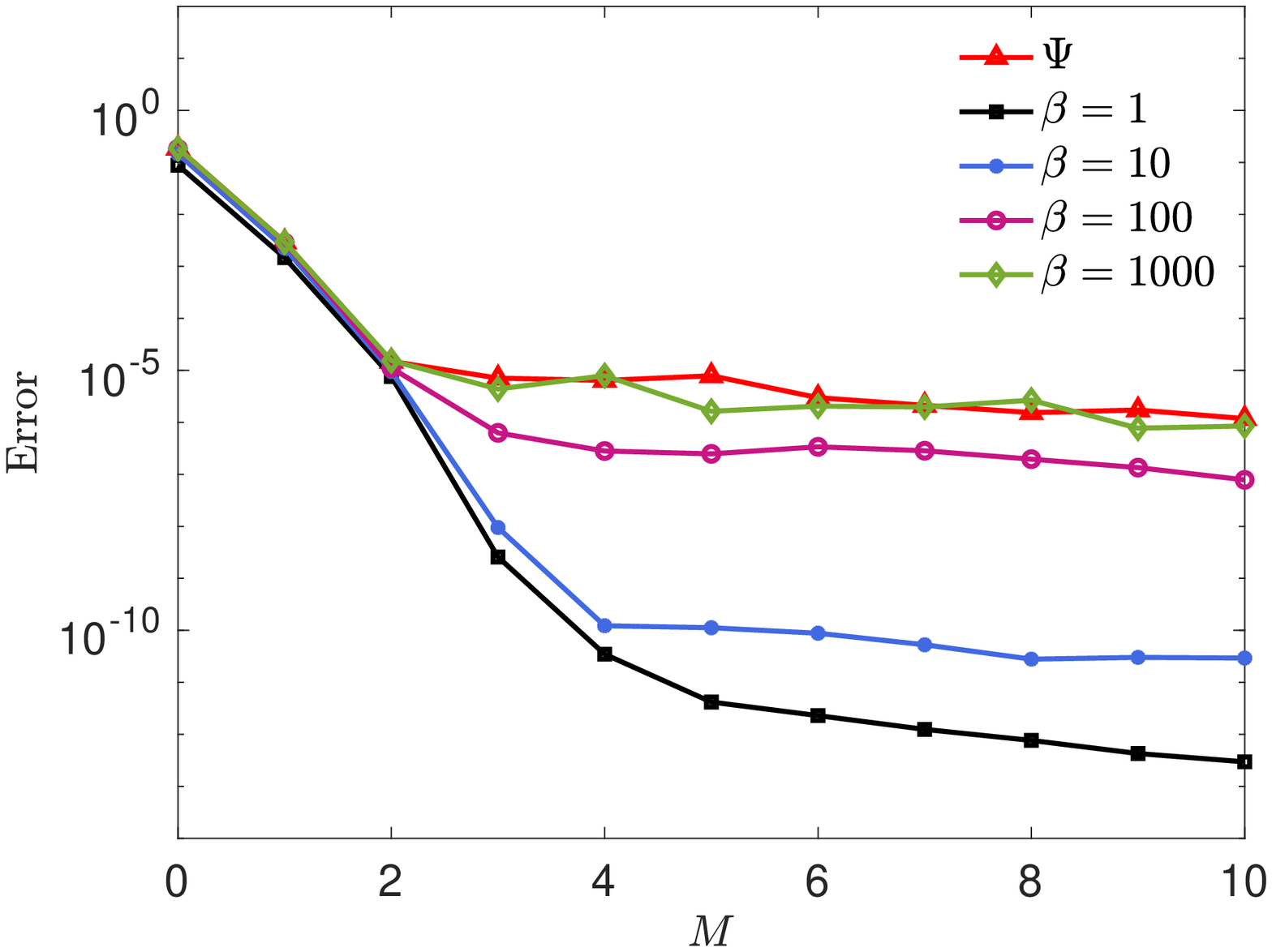}
\includegraphics[scale  = 0.40]{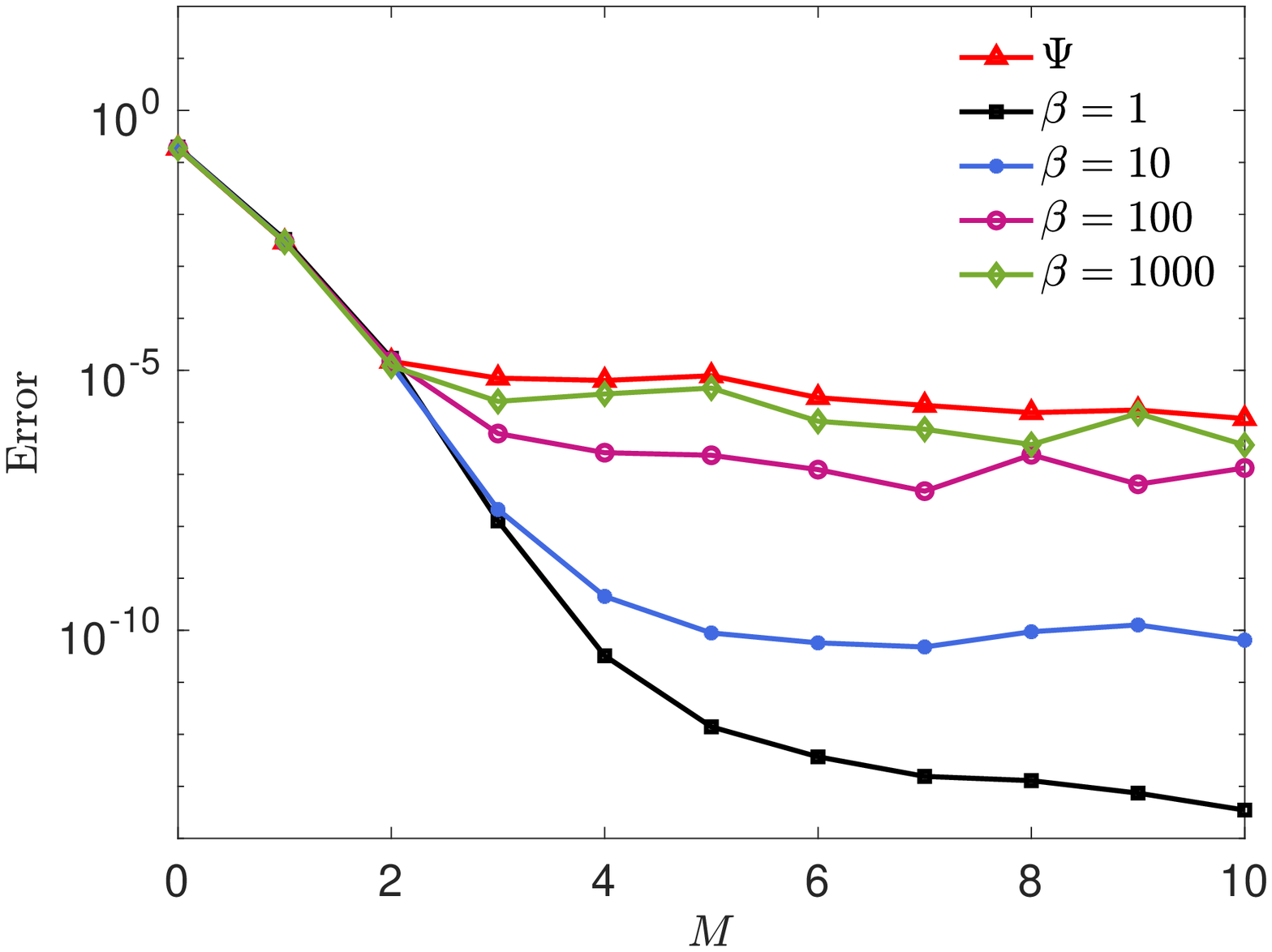}
\caption{\textbf{Test 3}. Convergence of the $L^2$ error of the DSMC-sG scheme for the VHS model with $\gamma = 1$  where the binary dynamics are given by \eqref{eq:binary_VHS} (left column) or \eqref{eq:binary_sig}-\eqref{eq:binary_terma} (right column). The error is computed from $P^M_{11}(z,t)$ at time  $t = 1$, $\Delta t = 10^{-1}$, with $N = 10^6$ particles. Reference solution computed with $M = 50$.   }
\label{fig:test3_3}
\end{figure}

\subsection{Test 4: VHS molecules with bivariate uncertainty}
In the last test we consider a 2D Boltzmann VHS model with bivariate uncertainty $z = (z_1,z_2)$ with independent components and with the same distribution $ p(z)$.  In details, we consider an uncertain initial distribution function of the form \eqref{eq:f0_VHS} with $\sigma(z_1) = \frac{\lambda\pi}{6} (1+\kappa_1 z_1)$, $\kappa_1>0$. Furthermore, we consider an uncertain interaction kernel of the form
\begin{equation}\label{eq:BVHS}
B(z_2,|v(z_1,z_2) - v_*(z_1,z_2)|) = C_0 |v(z_1,z_2) - v_*(z_1,z_2)|^{\gamma(z_2)}, 
\end{equation}
with $C_0 = 1/2\pi$. In this test we will consider $z_1, z_2 \sim \mathcal U([-1,1])$, and $\gamma(z_2) = \kappa_2(1+z_2)$, $\kappa_2>0$. These choices are coherent with the case where both the initial temperature of the gas and the nature of the molecules are affected by uncertainty. In particular,  we consider a collision kernel of the form \eqref{eq:BVHS} characterizing gases with collisions that may span from Maxwellian to hard potentials. 

In Figure \ref{fig:VHS2D2D} we report the evolution of the expected values of the  diagonal components of the stress tensor $\mathbb E[P_{11}]$,  $\mathbb E[P_{22}]$ and of their variance in the bivariate case with $\kappa_1 = 0.5, \kappa_2 = 1$ and in the univariate case obtained with $\kappa_1 = 0.5, \kappa_2 = 0$. The univariate setting is here conformal with the Maxwellian case studied in Section \ref{sect:test3}. It can be observed that the uncertainty on the collision kernel determines a faster trend to equilibrium in the resulting variable hard sphere gas than in the univariate Maxwellian case corresponding to $\kappa_2 = 0$. The dynamics are obtained through DSMC-sG scheme with $N = 10^5$ particles and $M = 5$ Galerkin projections of the binary collisions. 

\begin{figure}
\centering
\includegraphics[scale = 0.4]{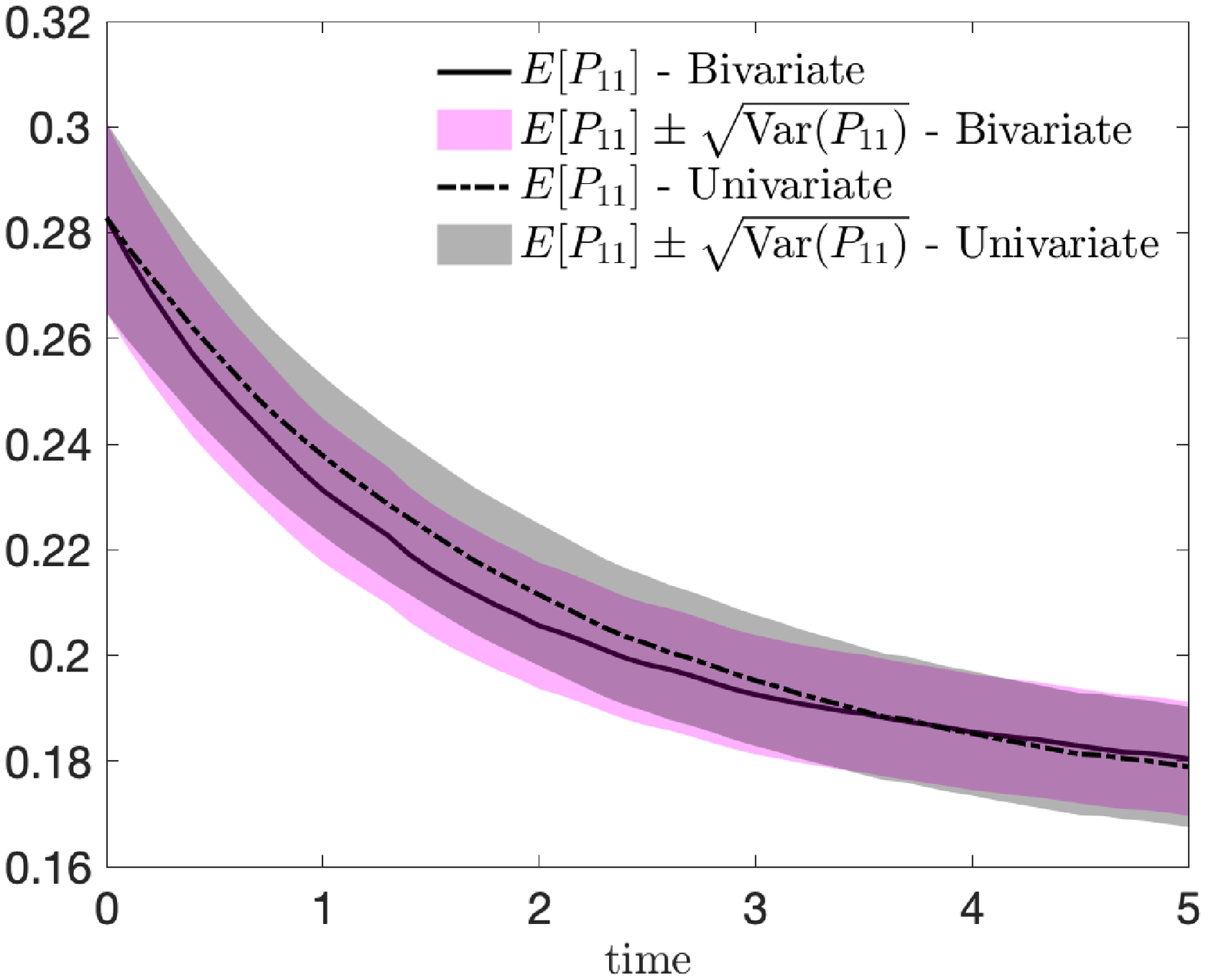}
\includegraphics[scale = 0.4]{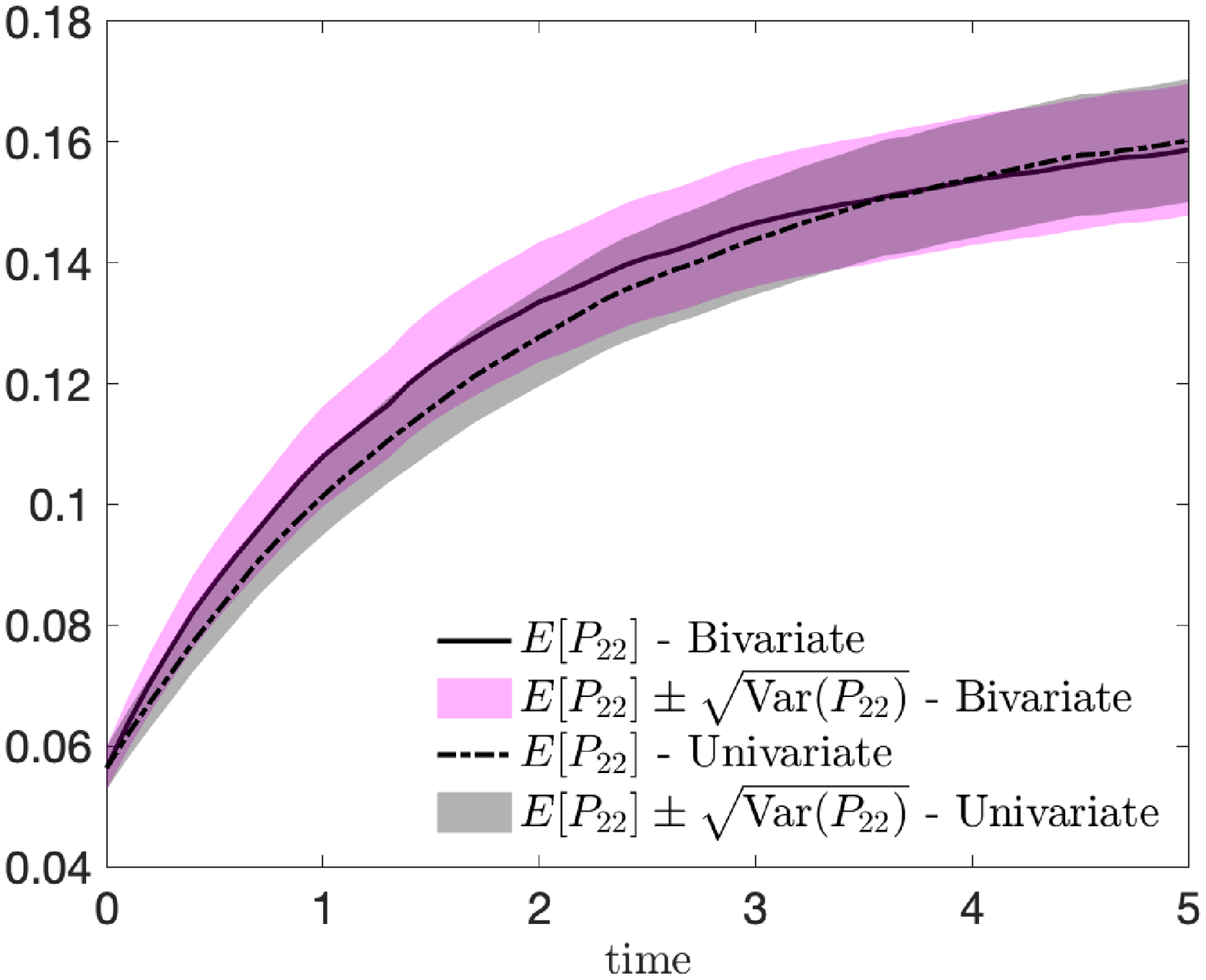}
\caption{ \textbf{Test 4}.  Evolution of the expected stress tensors $P_{11}(z_1,z_2,t)$ and  $P_{22}(z_1,z_2,t)$ from the initial density \eqref{eq:f0_VHS} in the case $\sigma(z_1) = \frac{\lambda \pi}{6}(1+\kappa_1 z_1)$, and uncertain interaction kernel of the form  \eqref{eq:BVHS} characterized by $\gamma(z_2) = \kappa_2(1 + z_2)$. We compare the univariate Maxwellian case corresponding to the choice $\kappa_2 = 0$ with the bivariate case where $\kappa_2 = 1$. In both tests $\kappa_1 = 0.5$ and $z_1,z_2\sim \mathcal U([-1,1])$ are two independent random variables.   }
\label{fig:VHS2D2D}
\end{figure}

\section{Conclusions}
We introduced a novel hybrid approach for uncertainty quantification in collisional kinetic equations of Boltzmann type. The method combines an efficient DSMC solver in the physical space with a stochastic Galerkin method in the random space. This coupling, however, is non trivial since it amounts on a generalized Polynomial Chaos expansion of the statistical samples and on the Galerkin projection of the corresponding DSMC solver. In particular, in the variable hard sphere case, this requires a suitable reformulation of the classical DSMC method. Several numerical examples for space homogeneous problems confirm the efficiency and the accuracy of the resulting solver. We emphasize that the methodology here presented is fully general and can be extended to other Boltzmann-type equations outside the classical rarefied gas dynamics setting. Extension of DSMC-sG methods to semiconductor Boltzmann equations, to the Landau-Fokker-Planck equation of plasma physics and applications to space non homogenous situations are under study and will be presented in forthcoming researches.

\section*{Acknowledgements} L.P. would like to thank the Italian Ministry of Instruction, University and Research (MIUR) to support this research with PRIN Project 2017, No. 2017KKJP4X, "Innovative numerical methods for evolutionary partial differential equations and applications".

\appendix

\section{Exact solutions of Maxwell-type kinetic equations with uncertainty}
In the following we sketch the general methodology to recover the well-known exact solutions of Maxwell-type models\cite{Boby,Ernst,KW} in the presence of uncertain parameters.


\subsection{Kac equation with uncertainty}\label{app:kac}
In Section \ref{sect:kac} we considered the Kac model with uncertain initial temperature.  Let us compute the first moments of the initial density $f_0$ given in \eqref{eq:f0_kac}
such that 
\begin{itemize}
\item[$i)$] $\int_{\mathbb R} f_0(z,v)dv =  \dfrac{\sqrt{\pi}}{2}$ 
\item[$ii)$] $\int_{\mathbb R} v f_0(z,v)dv = 0$
\item[$iii)$] $\int_{\mathbb R}v^2 f_0(z,v)dv = \dfrac{3\sqrt{\pi}}{4\alpha(z)}$.
\end{itemize}
It is easily seen that the total mass of the system is conserved in time whereas the momentum is not conserved for a general initial distribution, indeed it decays to zero with a rate depending on the distribution of the uncertainty, see \cite{TZ}. Thanks to the choice \eqref{eq:f0_kac} we may verify that the initial momentum is conserved in time. 

In order to find explicit solution of the Kac model with uncertainty we can argue as in \cite{Boby,Ernst,KW} and proceed as follows: we consider the class of solutions given by 
\begin{equation}\label{app:general_kac}
f(z,v,t) = (A(z,t)+B(z,t)v^2) e^{- s(z,t) v^2},
\end{equation}
where $A(z,t)$, $B(z,t)$ satisfy the following system by imposing conservation of mass and energy
\[
\begin{cases}\vspace{0.5em}
\sqrt{\pi}\alpha^{3/2}(z)s^{-1/2}(z,t) A(z,t) + \dfrac{\sqrt{\pi}}{2} \alpha^{3/2}(z)  s^{-3/2}(z,t) B(z,t) &= \dfrac{\pi}{2} \\
 \dfrac{\sqrt{\pi}}{2} \alpha^{3/2}(z) s^{-3/2}(z,t)A(z,t) + \dfrac{3\sqrt{\pi}}{4} s^{-5/2}(z,t)B(z,t) &= \dfrac{3\sqrt{\pi}}{4\alpha(z)}, 
\end{cases}
\]
whose solution is given by 
\begin{equation}
\label{app:system}
\begin{cases}
A(z,t) &= \dfrac{3}{4}\sqrt{s(z,t)} \left( \alpha^{-3/2}(z)-\alpha^{-5/2}(z) s(z,t) \right) \\
B(z,t) &= \dfrac{s^{3/2}(z,t)}{2} \left( 3 \alpha^{-5/2}(z)s(z,t) - \alpha^{-3/2}(z) \right). 
\end{cases}
\end{equation}
Therefore, it is sufficient to find exact evolution for $s(z,t)$ to describe the solution of the Kac model. To this end, we substitute the general solution \eqref{app:general_kac} in the collisional equation \eqref{eq:kac}. We have
\[
\begin{split}
\partial_t f(z,v,t) &= \dot{A} e^{-sv^2}+ (-  A\dot{s} + \dot B) v^2 e^{-sv^2}  - B \dot{s} v^4 e^{-sv^2}\\
Q(f,f)(z,v,t) &=  \int_{\mathbb R}\Big[ \dfrac{e^{-s(v^2+v_*^2)}}{2\pi} \int_0^{2\pi} (A+B(v\cos\omega -v_*\sin\omega)^2)(A+B(v\sin\omega +v_*\cos\omega)^2) d\omega \\
& \qquad -(A+Bv^2)(A+Bv^2)e^{-s(v^2+v_*^2)}\Big]dv_*,
\end{split}
\]
where for brevity we omitted explicit dependence on $z$ and time. We may explicitly compute the integrals of the collision operator $Q(f,f)$ to obtain
\[
Q(f,f)(z,v,t) =  \dfrac{3\sqrt{\pi}B^2}{32 s^2 \sqrt{s}} e^{-s v^2}+  \left( \dfrac{\sqrt{\pi}AB}{\sqrt{s}}  - \dfrac{3\sqrt{\pi}}{8}\dfrac{B^2}{s\sqrt{s}} \right)v^2 e^{-sv^2} + \dfrac{\sqrt{\pi}B^2}{8\sqrt{s}} v^4 e^{-sv^2}.
\]
Equating now the terms in $v^{2n}$, $n = 0,1,2$ we reduce to solve
\[
\begin{cases}
\dot A = \dfrac{3\sqrt{\pi}B^2}{32 s^2 \sqrt{s}} \\
-A\dot s + \dot B = \dfrac{\sqrt{\pi}AB}{\sqrt{s}}  - \dfrac{3\sqrt{\pi}}{8}\dfrac{B^2}{s\sqrt{s}} \\
B \dot s = - \dfrac{\sqrt{\pi}B^2}{8\sqrt{s}}.
\end{cases}
\]
By exploiting the relations established in \eqref{app:system} we may observe that the derived conditions are all equivalent to the following differential equation depending on the uncertain parameter $z$ that determines the evolution of $s(z,t)$. We have
\[
\begin{split}
\dot s(z,t) &= - \dfrac{\sqrt{\pi}}{16\alpha^2(z)\sqrt{\alpha(z)}} (3s^2(z,t)-\alpha(z)s(z,t)) \\
s(z,0) &= \alpha(z),
\end{split}\]
whose solution is
\begin{equation}\label{app:s}
s(z,t) = \dfrac{\alpha(z)e^{\frac{1}{16\alpha(z)}\sqrt{\frac{\pi}{\alpha(z)}}t}}{-2 + 3e^{\frac{1}{16\alpha(z)}\sqrt{\frac{\pi}{\alpha(z)}}t}}.
\end{equation}
Therefore, plugging \eqref{app:s} in the definition of $A(z,t)$, $B(z,t)$ in \eqref{app:system} we get \eqref{app:general_kac}, which is the exact solution of the Kac model depending on the uncertain quantity $z$.

\subsection{2D Maxwell models with uncertainty} \label{app:2D}
In Section \ref{sect:2DMax} we considered the 2D Boltzmann model for Maxwell molecules, i.e. $B \equiv 1$. In order to argue as in \cite{Boby,Ernst,KW} we can proceed as in Section \ref{app:kac}: let us consider the following initial distribution with uncertain temperature
\[
f_0(z,v) = \dfrac{\alpha^2(z) \mathbf v^2}{\pi} e^{-\alpha(z)\mathbf v^2},\qquad \mathbf v = \sqrt{v_x^2 + v_y^2}. 
\]
We compute the first moments of $f_0$ to obtain
\begin{itemize}
\item[$i)$] $\int_{\mathbb R^2} f_0(z,v)dv_x\,dv_y = 1$
\item[$ii)$] $\int_{\mathbb R^2} \mathbf v f_0(z,v)dv_x\,dv_y = 0$
\item[$iii)$]$\dfrac{1}{2}\int_{\mathbb R^2} \mathbf v^2 f_0(z,v)dv_x\,dv_y = \dfrac{1}{\alpha(z)} $. 
\end{itemize}
Hence, we consider the class of solutions given by 
\[
f(z,v,t) = (A(z,t) + B(z,t) \mathbf v^2) e^{-\mathbf v^2/2s(z,t)},
\]
and we impose the conservation of mass and energy to obtain
\[
\begin{cases}
2\pi s(z,t) (A(z,t) + 2B(z,t)s(z,t)) = 1, \\
2\alpha(z)\pi s^2(z,t) (A(z,t) + 4B(z,t)s(z,t)) = 1. 
\end{cases}
\]
Analogous computations as in Section \ref{app:kac} yield
\[
f(z,v,t) = \dfrac{1}{2\pi s(z,t)} \left[1 - \dfrac{1-s(z,t)}{s(z,t)}\left( 1-\dfrac{\mathbf v^2}{2s(z,t)} \right) \right]e^{-\mathbf v^2/2s(z,t)},
\]
where the evolution of $s(z,t)$ is given by 
\[
 s(z,t) = \dfrac{1}{\alpha(z)} \left( 1-\dfrac{1}{2}e^{-t/8} \right). 
\]

\section{Stochastic Galerkin representation of random samples and consistency of DSMC-sG approximation}\label{app:particle}
In this appendix we describe how to construct a set of random samples and their gPC projection. Next we derive a consistency estimate for the moments of the resulting empirical density in the DSMC-sG approximation.  

\subsection{Stochastic Galerkin representation of random samples}
Given the initial distribution $f_0(z,v)$ we need to construct a random sample $\tilde v(z) = \{v_i(z)\}_{i=1,\dots,N}$ such that for all $z \in \Omega$ the empirical density 
\[
f^N_0(z,v) = \dfrac{1}{N} \sum_{i=1}^N \delta(v-v_i( z)),
\]
is such that, formally, $f_0^N( z,v) \to  f_0({z},v)$ for $N \to +\infty$. For each $z$ standard random sampling techniques can be employed like direct sampling or acceptance-rejection algorithms, see \cite{Par2} and the references therein for an introduction. A direct application of these methods, however, is not straightforward in the case the samples will be projected using a gPC expansion with respect to the random quantity.

 In the sequel we clarify how we perform projections in the polynomial space $\mathbb P^M$ of a sample. Therefore, we need to determine $v^M_i(z)$ such that 
\[
f_0^N(z,v)\approx \dfrac{1}{N} \sum_{i=1}^N \delta(v-v_i^M(z)), \qquad v_i^M(z)  = \sum_{k=0}^M \hat{v}_{i,k}\Phi_k(z).
\]

The problem can be formulated as follows: given an initial density $f_0(z,v)$ and the distribution of the uncertain quantities $p(z)$, starting from the Gauss collocation nodes $z_0,\dots,z_H$, $H\in\mathbb N$ (chosen following the Wiener-Askey scheme, see \cite{Xiu}), we want to obtain the set of samples
\begin{equation}
\begin{split}
V^0 &= \{ v_1(z_0),\dots,v_N(z_0) \}^T,\\
&\vdots \\
V^H &= \{ v_1(z_H),\dots,v_N(z_H)\}^T.
\end{split}
\label{eq:samp}
\end{equation}
Then, the projection on the $k$-th degree term can be obtained by direct integration
\[
\hat{v}_{i,k} = \int_{I_Z} v_i(z) \Phi_k(z)p(z)dz \approx \sum_{h=0}^H w_h v_i(z_h)\Phi_k(z_h). 
\]
Note that, the above samples \eqref{eq:samp} have to be correlated, namely $v_i(z_h)$, $h=1,\ldots,H$ should represent the same sample $v_i(\cdot)$ for the various values $z_h$. At the numerical level, in 1D it is possible to overcome this issue by considering a set of uncorrelated groups of samples $V^{h}$, $h=1,\ldots,H$ which have been ordered for a given $h$ through a simple sorting process in velocity. This approach, however, cannot be extended directly to higher dimensions in velocity space. In this latter case, there are various possible techniques that can be adopted, accordingly to the particular sampling strategy.

If we consider the multidimensional cases treated in Section \ref{sect:num} where the uncertainties are affecting the temperature of the system defined explicitly in \eqref{eq:T_2D} and \eqref{eq:TE2D2}, an effective strategy is to exploit the classical scaling property of continuous distributions in terms of the second order moment. This is described in  the following algorithm:

\begin{algorithm}[sG projection of initial random sample]~
\begin{enumerate}
\item Generate a set of Gaussian nodes $(z_0,\dots,z_H)$ according to the distribution $p(z)$.
\item \begin{tabbing}
\= {\fP for} \= $i=1$  {\fP to} $N$ \\
\>		\> Generate a sample $v_i(z_0)$ using a suitable sampling method\\ 
\>		\> from the initial density $f_0(z_0,v)$\\
\>		\> Compute $v_i(z_h) = \sqrt{T(z_h)}{v_i(z_0)}$, $h=1,\dots,H$\\  
\> {\fP end for} 
\end{tabbing}
\item \begin{tabbing}
\= {\fP for} \= $m=0$  {\fP to} $M$ \\
	    \>		\> compute projections on $m$-th degree linear space $\{\hat v_{i,m}\}$ \\
         \> {\fP end for} \\
\end{tabbing}
\end{enumerate}
\label{alg:samp0}
\end{algorithm}


\subsection{Consistency estimate}
 The spectral convergence of sG expansion of the samples (see also Section \ref{sect:DSMC}) for sufficiently regular functions in the random space follows form standard results in polynomial approximation theory, we recall for example \cite{Fun, Xiu}.  In particular,  let $H^r(\Omega)$ be a weighted Sobolev space 
\[
H^r(\Omega) = \left\{ v:\Omega\rightarrow \mathbb R: \dfrac{\partial^k v}{\partial z^k} \in L^2(\Omega), 0\le k \le r \right\}. 
\]
Thanks to the introduced properties of the polynomial basis of the random space we have
\begin{lemma}\label{th_fu}
For any $v(z) \in H^r(\Omega)$, $r\ge 0$, there exists a constant $C$ independent of $M>0$  such that
\[
\|v - v_M \|_{L^2(\Omega)} \le \dfrac{C}{M^r} \| v \|_{H^r(\Omega)},
\]
\end{lemma}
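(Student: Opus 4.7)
Since $\{\Phi_m\}_{m\ge 0}$ is an orthonormal basis of $L^2(\Omega,p(z)\,dz)$, the truncation $v_M=\sum_{m=0}^M \hat v_m\Phi_m$ is exactly the orthogonal $L^2(\Omega,p\,dz)$-projection of $v$ onto $\mathbb P^M$. Bessel's identity then reduces everything to a tail estimate for the generalized Fourier coefficients $\hat v_m=\int_\Omega v(z)\Phi_m(z)p(z)\,dz$:
\[
\|v-v_M\|_{L^2(\Omega)}^2=\sum_{m=M+1}^{\infty}|\hat v_m|^2.
\]

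The strategy I would follow is to exploit the fact that, for every polynomial family appearing in the Wiener--Askey scheme, $\Phi_m$ is the $m$-th eigenfunction of a second-order Sturm--Liouville operator $\mathcal L$ associated with the weight $p$, and the eigenvalues satisfy $\lambda_m\sim m^2$. Writing $\Phi_m=\lambda_m^{-1}\mathcal L\Phi_m$ and integrating by parts against $v$ (the boundary contributions vanish thanks to the structure of $p$) gives, for $v\in H^2(\Omega)$,
\[
\lambda_m\,\hat v_m=\int_\Omega \mathcal L v(z)\,\Phi_m(z)\,p(z)\,dz.
\]
Iterating this relation $k$ times yields $\lambda_m^k\hat v_m=\langle \mathcal L^k v,\Phi_m\rangle$ for $v\in H^{2k}(\Omega)$, so Parseval gives $\sum_m\lambda_m^r|\hat v_m|^2\simeq \|v\|_{H^r(\Omega)}^2$ for every even integer $r$, and then for all real $r\ge 0$ by interpolation between consecutive even integers.

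Combined with the Bessel identity, this \emph{weighted} Parseval bound gives the estimate in one line:
\[
\|v-v_M\|_{L^2(\Omega)}^2=\sum_{m>M}\lambda_m^{-r}\bigl(\lambda_m^{r}|\hat v_m|^2\bigr)\le \lambda_{M+1}^{-r}\sum_{m>M}\lambda_m^{r}|\hat v_m|^2\le \frac{C}{M^{2r}}\|v\|_{H^r(\Omega)}^2,
\]
and taking square roots closes the proof. The substantive step is the norm equivalence $\bigl(\sum_m\lambda_m^r|\hat v_m|^2\bigr)^{1/2}\sim \|v\|_{H^r(\Omega)}$, whose constants depend on the chosen polynomial family and on the weight $p$; rather than redo it from scratch I would simply invoke the corresponding Jackson-type inequalities collected in \cite{Fun,Xiu}. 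The naive alternative, namely bounding each $|\hat v_m|$ individually by $Cm^{-r}\|v\|_{H^r}$ and summing, only produces the suboptimal $M^{1/2-r}$ rate, so isolating the decay in the eigenvalue-weighted norm is precisely what is needed to reach the sharp $M^{-r}$ exponent stated in the lemma; this is the only step that requires genuine care.
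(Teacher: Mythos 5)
Your argument is correct, and in fact the paper does not prove this lemma at all: it simply states it as a standard result of polynomial approximation theory and defers to \cite{Fun,Xiu}. Your sketch is precisely the classical Sturm--Liouville proof contained in those references (projection plus Bessel identity, $\lambda_m\hat v_m=\langle\mathcal L v,\Phi_m\rangle$ via integration by parts with boundary terms killed by the degenerate coefficient of $\mathcal L$ at $\partial\Omega$, eigenvalue-weighted Parseval, interpolation for non-even $r$), so there is nothing to compare against on the paper's side. Two minor points worth noting: you only need the one-sided bound $\sum_m\lambda_m^{r}|\hat v_m|^2\le C\|v\|^2_{H^r(\Omega)}$ rather than the full norm equivalence you assert (the reverse inequality requires elliptic regularity for the singular operator and is not needed), and your closing chain indeed uses only that direction; and for some Wiener--Askey families the sharp versions of this estimate are naturally stated in weighted Sobolev norms, but the form with the full $H^r(\Omega)$ norm on the right, as in the lemma, is the standard (slightly weaker) statement and your argument delivers exactly that.
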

Starting from the above spectral estimate, we want to obtain an overall estimate for the moments of the particle distribution computed using the Monte Carlo approach.

Given a function $f(z,v,t)$, we define the expected value of $f$ with respect to $p(z)$ as
\[
\mathbb E[f] = \int_{\Omega} f(z,v,t)p(z)dz,
\]
its empirical measure and the empirical measure in the sG representation as 
\begin{equation}
\label{eq:FN_empi}
f^N(z,v,t) = \dfrac{1}{N} \sum_{j=1}^N \delta(v-v_i(z,t)),\qquad f^N_M(z,v,t) = \dfrac{1}{N} \sum_{j=1}^N \delta(v-v_i^M(z,t)).
\end{equation}
Observe that, for any a test function $\varphi$, if we denote by 
$$\langle \varphi, f \rangle(z,t):=\int_{\RR^{d}}f(z,v,t)\varphi(v)\,dv,$$ 
we have
\[
\langle \varphi, f^N \rangle(z,t) = \dfrac{1}{N} \sum_{j=1}^N \varphi(v_i(z,t)),\qquad \langle \varphi, f_M^N \rangle(z,t) = \dfrac{1}{N} \sum_{j=1}^N \varphi(v^M_i(z,t)). 
\]
Note that, if we assume that $\int_{\R^d} f(z,v,t)\,dz = 1$,  then
 $\langle \varphi, f \rangle(z,t)$ denotes the expectation of $\varphi$ with respect to $f$. Therefore, from the central limit theorem we have \cite{Caflisch}
\begin{lemma}
The root mean square error satisfies
\[
{\mathbb E}_V\left[\left(\langle \varphi, f \rangle(z,t)-\langle \varphi, f^N \rangle(z,t)\right)^2 \right]^{1/2} = \frac{\sigma_\varphi(z,t)}{{N}^{1/2}}
\]
where
\[
\sigma^2_\varphi(z,t) = \int_{\R^d} \left(\varphi(v)-\langle \varphi, f \rangle(z,t)\right)^2\, f(z,v,t)\,dv.
\]
\label{le:2}
\end{lemma}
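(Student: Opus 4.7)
The plan is to recognize that $\langle \varphi, f^N\rangle(z,t)$ is nothing but the sample mean of $N$ i.i.d.\ realizations of the random variable $\varphi(V)$ with $V\sim f(z,\cdot,t)$. By the sampling procedure of Appendix B.1, for each fixed $z$ and $t$ the particles $v_1(z,t),\ldots,v_N(z,t)$ are drawn independently from the probability density $f(z,\cdot,t)$, so the scalar quantities $X_i := \varphi(v_i(z,t))$ form an i.i.d.\ family with common expectation $\E_V[X_i] = \langle \varphi, f\rangle(z,t)$ and common variance $\E_V[(X_i - \langle\varphi,f\rangle(z,t))^2] = \sigma_\varphi^2(z,t)$, which is precisely the quantity defined in the statement.

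The first step is then to rewrite the error as a centred sum,
\[
\langle \varphi, f^N\rangle(z,t) - \langle \varphi, f\rangle(z,t) = \frac{1}{N}\sum_{i=1}^N \bigl(X_i - \E_V[X_i]\bigr),
\]
square both sides and take $\E_V$. Expanding the square and exchanging the finite sum with the expectation yields
\[
\E_V\!\left[\bigl(\langle \varphi, f^N\rangle(z,t) - \langle \varphi, f\rangle(z,t)\bigr)^2\right] = \frac{1}{N^2}\sum_{i,j=1}^N \E_V\!\left[(X_i - \E_V[X_i])(X_j - \E_V[X_j])\right].
\]
By independence of $X_i$ and $X_j$ for $i\neq j$, the off-diagonal contributions vanish, while each of the $N$ diagonal terms equals $\sigma_\varphi^2(z,t)$.

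Collecting the diagonal pieces gives $N \sigma_\varphi^2(z,t)$, and after multiplication by the prefactor $1/N^2$ this reduces to $\sigma_\varphi^2(z,t)/N$; taking the square root concludes the argument. There is essentially no obstacle here: once the i.i.d.\ nature of the collocated samples $\{v_i(z,t)\}_{i=1}^N$ is invoked, the identity is the textbook variance computation for a Monte Carlo estimator. The only hypothesis one should tacitly assume is that $\sigma_\varphi^2(z,t)$ is finite, which is already implicit in the statement since the right-hand side is defined through an integral involving $\varphi^2$ against $f(z,\cdot,t)$.
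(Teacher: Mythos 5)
Your proof is correct and is precisely the standard Monte Carlo variance computation that the paper invokes by citation (to Caflisch) rather than writing out: the paper only notes that, for each fixed $z$, the $\varphi(v_i(z,t))$ are i.i.d.\ with law induced by $f(z,\cdot,t)$, which is exactly the hypothesis you use. No discrepancy in approach or content.
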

In the above lemma we used the notation ${\mathbb E}_V$ to denote the expectation in the velocity space with respect to $f$. More precisely, for each $z\in\Omega$, $\langle \varphi, f^N \rangle(z,t)$ is considered as the sum of $N$ random variables $\varphi(v_1(z,t)),\ldots,\varphi(v_N(z,t))$ with $v_1(z,t),\ldots,v_N(z,t)$ independent and identically distributed as $f(z,v,t)$.

Next, for a random variable $V(z,t)$ taking values in $L^2(\Omega)$ we define 
\[
\| V \|_{L^2(\Omega;L^2(\mathbb R^{d_v}))} = \| \mathbb E_V[V^2]^{1/2} \|_{L^2(\Omega)},
\]
or equivalentely  
\[
\| V \|_{L^2(\mathbb R^{d_v};L^2(\Omega))} =\mathbb E_V\left[\|V\|^2_{L^2(\Omega)}\right]^{1/2}.
\]

We have the following result:
\begin{theorem}
Let $f(z,v,t)$ a probability density function in $v$ at time $t\ge 0$ and $f_M^N(z,v,t)$ the empirical measure of the $N$-particles sG approximation with $M$ projections associated to the samples $\{v_1(z,t),\dots,v_N(z,t)\}$ defined in \eqref{eq:FN_empi}. Provided that $v_i(z,t)\in H^r(\Omega)$ for all $i=1,\dots,N$, the following estimate holds
\begin{equation}
\| \langle\varphi,f\rangle - \langle \varphi, f_M^N\rangle \|_{L^2(\mathbb R^{d_v};L^2(\Omega))} \le \dfrac{\|\sigma_{\varphi} \|_{L^2(\Omega)}}{{N}^{1/2}}+\dfrac{C}{M^r} \left( \dfrac{1}{N} \sum_{i=1}^N \| \nabla\varphi(\xi_i)\|_{L^2(\Omega)}\right),
\end{equation}
where $\varphi$ is a test function, $C$ is a positive constant independent on $M$, $\xi_i  = (1-\theta)v_i + \theta v_i^M$, $\theta \in (0,1)$. 
\end{theorem}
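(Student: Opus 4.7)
The plan is to control the error by a single triangle inequality, inserting the intermediate empirical measure $f^N$ (without gPC projection) between $f$ and $f_M^N$, so that
\[
\langle\varphi,f\rangle - \langle\varphi,f_M^N\rangle = \bigl(\langle\varphi,f\rangle - \langle\varphi,f^N\rangle\bigr) + \bigl(\langle\varphi,f^N\rangle - \langle\varphi,f_M^N\rangle\bigr).
\]
The first term on the right is the pure Monte Carlo sampling error (in velocity, at fixed $z$) and the second is the gPC projection error of the samples (in $\Omega$, at fixed particle realization). This splits the work cleanly and lets me invoke the two earlier lemmas, one for each piece.

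For the first piece, I apply Lemma \ref{le:2} pointwise in $z$: for almost every $z\in\Omega$ one has
\[
\mathbb E_V\bigl[(\langle\varphi,f\rangle(z,t) - \langle\varphi,f^N\rangle(z,t))^2\bigr]^{1/2} = \frac{\sigma_\varphi(z,t)}{N^{1/2}}.
\]
Squaring, integrating against $p(z)$ on $\Omega$, using Tonelli to exchange $\mathbb E_V$ with the $z$-integral (the samples $v_i(z,\cdot)$ are i.i.d.\ distributed as $f(z,v,t)$ for each fixed $z$), and taking the square root yields
\[
\|\langle\varphi,f\rangle - \langle\varphi,f^N\rangle\|_{L^2(\mathbb R^{d_v};L^2(\Omega))} = \frac{\|\sigma_\varphi\|_{L^2(\Omega)}}{N^{1/2}},
\]
which is exactly the first term in the claimed bound.

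For the second piece I write
\[
\langle\varphi,f^N\rangle(z,t) - \langle\varphi,f_M^N\rangle(z,t) = \frac{1}{N}\sum_{i=1}^N \bigl(\varphi(v_i(z,t)) - \varphi(v_i^M(z,t))\bigr),
\]
and apply the mean value theorem componentwise to obtain $\varphi(v_i)-\varphi(v_i^M)=\nabla\varphi(\xi_i)\cdot(v_i-v_i^M)$ with $\xi_i=(1-\theta)v_i+\theta v_i^M$, $\theta\in(0,1)$. Using the triangle inequality in $L^2(\Omega)$ and then Cauchy--Schwarz on the inner product gives, for each $i$,
\[
\|\nabla\varphi(\xi_i)\cdot(v_i-v_i^M)\|_{L^2(\Omega)} \le \|\nabla\varphi(\xi_i)\|_{L^2(\Omega)}\,\|v_i-v_i^M\|_{L^\infty(\Omega)}.
\]
The spectral decay Lemma \ref{th_fu} (together with the standing hypothesis $v_i\in H^r(\Omega)$ and a Sobolev embedding, or equivalently a uniform $H^r$-bound on the samples) bounds $\|v_i-v_i^M\|$ by $C M^{-r}\|v_i\|_{H^r(\Omega)}$; absorbing the uniform-in-$i$ factor $\|v_i\|_{H^r(\Omega)}$ into the constant $C$ and averaging over the $N$ particles produces the second term of the claimed estimate.

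The main technical subtlety is the first paragraph: the mixed norm $L^2(\mathbb R^{d_v};L^2(\Omega))$ forces an exchange between the velocity-expectation $\mathbb E_V$ (which for fixed $z$ averages over the i.i.d.\ samples distributed according to $f(z,\cdot,t)$) and the $z$-integration against $p(z)$; one needs the samples to be jointly measurable in $(z,\omega)$ so that Tonelli applies, which is ensured by the coupled sampling construction described in Appendix B.1. The rest of the argument is a clean chain of triangle inequality, mean value theorem, Cauchy--Schwarz and the gPC spectral estimate, with the only cosmetic choice being where to absorb the $H^r$-norms of the samples into the constant $C$.
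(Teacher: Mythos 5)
Your proposal is correct and follows essentially the same route as the paper: the same triangle inequality through the unprojected empirical measure $f^N$, Lemma \ref{le:2} for the Monte Carlo term, and the mean value theorem combined with the spectral estimate of Lemma \ref{th_fu} for the gPC term. The only deviation is cosmetic: in bounding term $II$ you apply H\"older as $L^2(\Omega)\times L^\infty(\Omega)\to L^2(\Omega)$ and then invoke a Sobolev embedding to control $\|v_i-v_i^M\|_{L^\infty(\Omega)}$, whereas the paper pairs the two factors directly as $\|\nabla\varphi(\xi_i)\|_{L^2(\Omega)}\,\|v_i-v_i^M\|_{L^2(\Omega)}$ and feeds the second factor straight into Lemma \ref{th_fu}.
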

\begin{proof}

Thanks to the properties of the  norm we have
\begin{equation*}\begin{split}
&\| \langle\varphi,f\rangle - \langle\varphi,f^N_M\rangle  \|_{L^2(\mathbb R^{d_v};L^2(\Omega))}  \\
&\qquad \le
\underbrace{\|  \langle\varphi,f\rangle -  \langle\varphi,f^N\rangle \|_{L^2(\mathbb R^{d_v};L^2(\Omega))}}_{I} + \underbrace{ \|   \langle\varphi,f^N\rangle - \langle\varphi,f^N_M\rangle  \|_{L^2(\mathbb R^{d_v};L^2(\Omega))}}_{II}.
\end{split}\end{equation*}
The first term can be easily estimated using Lemma \ref{le:2} to get
\[\begin{split}
I 
& = \dfrac{\| \sigma_\varphi(z)\|_{L^2(\Omega)}}{{N}^{1/2}}. 
\end{split}\]
Let us consider now the second term
\[
\begin{split}
II  &= \left\| \dfrac{1}{N} \sum_{i=1}^N (\varphi(v_i)-\varphi(v_i^M)) \right\|_{L^2(\mathbb R^{d_v};L^2(\Omega))} \\
&\le \dfrac{1}{N} \sum_{i=1}^N  \| \varphi(v_i) - \varphi(v_{i}^M) \|_{L^2(\mathbb R^{d_v};L^2(\Omega))}.
\end{split}\]
From the mean value theorem $ \varphi(v_i) - \varphi(v_{i}^M) = \nabla \varphi(\xi_i) \cdot (v_i-v_i^M)$ for $\xi_i  = (1-\theta)v_i + \theta v_i^M$, $\theta \in (0,1)$. Therefore we have
\[
II \le \dfrac{1}{N} \sum_{i=1}^N  \| \nabla\varphi(\xi_i)\|_{L^2(\Omega)} \| v_i-v_i^M\|_{L^2(\Omega)},
\]
and using Lemma \ref{th_fu}, for $C = \max_i C_i \| v_i\|_{H^r(\Omega)}$, we obtain
\[
II \le \dfrac{C}{M^r} \left(\dfrac{1}{N} \sum_{i=1}^N  \| \nabla\varphi(\xi_i)\|_{L^2(\Omega)}\right). 
\]

\end{proof}


\begin{thebibliography}{99}  

\bibitem{Ba86} 
H.~Babovski. On a simulation scheme for the Boltzmann equation, \emph{Math. Med. Appl. Sc.}, \textbf{8}:223--233, 1986.

\bibitem{BI89} 
H.~Babovsky, and R.~Illner. A convergence proof for Nanbu's simulation method for the full Boltzmann equation, \emph{SIAM J. Numer. Anal.}, \textbf{26}: 45--65, 1989.

\bibitem{Bi76} 
G.A.~Bird. \emph{Molecular gas dynamics}, Clarendon Press, Oxford, 1976.

\bibitem{Boby} 
A.V.~Bobylev. Exact solutions of the Boltzmann equation, \emph{Dokl. Akad. Nauk SSSR}, \textbf{225}:1296--1299, 1975 (in Russian).

\bibitem{BoNa} 
A.V.~Bobylev, and K.~Nanbu. Theory of collision algorithms for gases
and plasmas based on the Boltzmann equation and the Landau-Fokker-
Planck equation, \emph{Phys. Rev. E}, \textbf{61}: 4576--4586, 2000.

\bibitem{BR}
A.V.~Bobylev, and S. Rjasanow. Difference scheme for the Boltzmann equation based on the fast Fourier transform, \emph{European J. Mech. B Fluids}, \textbf{16}(2):293--306, 1997. 

\bibitem{Caflisch}
R.E.~Caflisch. Monte Carlo and Quasi Monte Carlo methods, \emph{Acta Numerica} \textbf{7}: 1--49 1998.

\bibitem{CFTV}
J. A. Carrillo, M. Fornasier, G. Toscani, and F. Vecil. Particle, kinetic, and hydrodynamic models of swarming. In \emph{Mathematical Modeling of Collective Behavior in Socio-Economic and Life Sciences}, Modelling and Simulation in Science and Technology, Birkh{\"a}user, pp. 297--336, 2010. 

\bibitem{CPZ}
J. A. Carrillo, L. Pareschi, and M. Zanella. Particle based gPC methods for mean-field models of swarming with uncertainty, \emph{Commun. Comput. Phys.} \textbf{25}(2): 508--531, 2019. 

\bibitem{CZ19}
J. A. Carrillo, and M. Zanella. Monte Carlo gPC methods for diffusive kinetic flocking models with uncertainties, \emph{Vietnam J. Math.} \textbf{47}:931--954, 2019. 

\bibitem{Ce88} 
C.~Cercignani, \emph{The Boltzmann equation and its applications}, Springer, Berlin, 1988. 

\bibitem{DPR}
P. Degond, L. Pareschi, and G. Russo. \emph{Modeling and Computational Methods for Kinetic Equations}, Modeling and Simulation in Science, Engineering and Technology, Birkh{\"a}user, 2004. 

\bibitem{DP15}
G.~Dimarco, and L.~Pareschi. Numerical methods for kinetic equations, {\em Acta Numerica} \textbf{23}: 369--520, 2014.

\bibitem{DP1} 
G.~Dimarco, and L.~Pareschi, Multi-scale control variate methods for uncertainty quantification in kinetic equations, \emph{J. Comp. Phys.} \textbf{388}:63--89, 2019.

\bibitem{DP2} 
G.~Dimarco, and L.~Pareschi, Multi-scale variance reduction methods based on multiple control variates for kinetic equations with uncertainties. \emph{Multiscale Model. \& Simul.} \textbf{18}(1):351--382, 2020.

\bibitem{Ernst}
M. H. Ernst. Nonlinear model-Boltzmann equations and exact solutions, \emph{Phys. Rep.}, \textbf{78}(1):1--171, 1981. 

\bibitem{Fun}
D. Funaro. \emph{Polynomial Approximation of Differential Equations}. Springer, Berlin, 1992. 

\bibitem{Klar} M. Herty, A. Klar, and L. Pareschi. General kinetic models for vehicular traffic flows and Monte-Carlo methods. Comput. Methods Appl. Math., \textbf{5}(2):155--169, 2005.

\bibitem{HJ}
J. Hu, and S. Jin. A stochastic Galerkin method for the Boltzmann equation with uncertainty. \emph{J. Comput. Phys.} \textbf{315}:150--168, 2016.


\bibitem{HJS} J. Hu, S. Jin, and R. Shu. On stochastic Galerkin approximation of the nonlinear Boltzmann equation with uncertainty in the fluid regime, \emph{J. Comp. Phys.}, \textbf{397}, 108838, 2019

\bibitem{HJS2} J. Hu, S. Jin, and R. Shu. A stochastic Galerkin method for the Fokker-Planck-Landau equation with random uncertainties. \emph{Theory, numerics and applications of hyperbolic problems. II}, 
 1--19, Springer Proc. Math. Stat., 237, Springer, Cham,  2018. 

\bibitem{HPW}
J. Hu, L. Pareschi, and Y. Wang. Uncertainty quantification for the kinetic BGK equation using 
multilevel variance reduced Monte Carlo methods. \emph{Preprint}, 2020.
 

\bibitem{IlNe87} 
R.~Illner, and H.~Neunzert, On simulation methods for the Boltzmann equation, \emph{Tr. Th. Stat. Phys.}, \textbf{16}:141--154, 1987.

\bibitem{JL2} S. Jin, and L. Liu. An asymptotic-preserving stochastic Galerkin method for the semiconductor Boltzmann equation with random inputs and diffusive scalings.
 \emph{Multiscale Model. \& Simul.} \textbf{15}(1):157--183, 2017.

\bibitem{JP}
S. Jin, and L. Pareschi. \emph{Uncertainty Quantification for Hyperbolic and Kinetic Equations}, SEMA-SIMAI Springer Series, 2018. 

\bibitem{JZ}
S. Jin, and Y. Zhu. Hypocoercivity and uniform regularity for the Vlasov-Poisson-Fokker-Planck system with uncertainty and multiple scales, \emph{SIAM J. Math. Anal.} \textbf{50}(2): 1790--1816, 2018. 


\bibitem{KW} 
M.~Krook, and T.~T.~Wu, Formation of Maxwellian tails, \emph{Phys. Rev. Lett.} \textbf{36}:1107--1109, 1976.

\bibitem{LJ}
L. Liu, and S. Jin. Hypocoercivity based sensitivity analysis and spectral convergence of the stochastic Galerkin approximation to collisional kinetic equations with multiple scales and random inputs, \emph{Multiscale Model. \& Simul.} \textbf{3}:1085--1114, 2018. 

\bibitem{LZ} L. Liu, X. Zhu. A bi-fidelity method for the multiscale Boltzmann equation with random parameters. \emph{J. Comput. Phys.}, \textbf{402}, 108914, 2020 

\bibitem{Na83} 
K.~Nanbu, Interrelations between various direct simulations methods for solving the Boltzmann equation, \emph{J. Phys. Soc. Jap.}, \textbf{52}: 3382--3388, 1983.

\bibitem{Par1}
L.~Pareschi, and G.~Russo. Numerical solution of the Boltzmann equation I: Spectrally accurate approximation of the collision operator. \emph{SIAM J. Numer. Anal.} \textbf{37}(4): 1217--1245, 2000. 

\bibitem{Par2}
L.~Pareschi, and G.~Russo. An introduction to Monte Carlo methods for the Boltzmann equation. \emph{ESAIM: Proceedings} \textbf{10}: 35--75, 2001.

\bibitem{Par3} L.~Pareschi, and G.~Russo. Time Relaxed Monte Carlo methods for the Boltzmann equation, \emph{SIAM J. Sci. Comput.} \textbf{23}(4): 1253--1273, 2001.

\bibitem{PT1}
L. Pareschi, and G. Toscani. \emph{Interacting Multiagent Systems: Kinetic Equations and Monte Carlo Methods}, Oxford University Press, 2013. 

\bibitem{Po} 
G. Po\"ette. 
A gPC-intrusive Monte-Carlo scheme for the resolution of the uncertain linear Boltzmann equation,
\emph{J. Comput. Phys.} \textbf{385}:135--162, 2019.


\bibitem{SHJ}
R. Shu, J. Hu, and S. Jin. A stochastic Galerkin method for the Boltzmann equation with multi-dimensional random inputs using sparse wavelet bases, \emph{Num. Math.:Theory, Methods and Applications} \textbf{10}:465--488, 2017.

\bibitem{TZ}
A. Tosin, and M. Zanella. Boltzmann-type model with uncertain binary interactions, \emph{Commun. Math. Sci.} \textbf{16}(4):962--984, 2018. 

\bibitem{Wagner1992} 
W.~Wagner. A convergence proof for Bird's direct simulation Monte Carlo method for the Boltzmann equation, \emph{J. Stat. Phys.} \textbf{66}:1011--1044, 1992.

\bibitem{Xiu}
D. Xiu. \emph{Numerical Methods for Stochastic Computations: A Spectral Method Approach}, Princeton University Press, 2010. 

\bibitem{Villani1}
C. Villani. Mathematics of granular materials, \emph{J. Stat. Phys.} \textbf{124}: 781--822, 2006.
 
\end{thebibliography}
\end{document}